\newenvironment{proof}[1][Proof]{\textbf{#1.} }{\ \rule{0.5em}{0.5em}}
\def\grad{\nabla}
\def\div{\text{div}}
\def\Lap{\Delta}
\def\bnu{\textbf{$\nu$}}
\def\beq{\begin{equation}}
\def\eeq{\end{equation}}
\def\bbm{\begin{bmatrix}}
\def\ebm{\end{bmatrix}}
\def\A{\mathcal{A}}
\def\bA{\mathbb{A}}
\def\e{\epsilon}
\def\l{\lambda}
\def\la{\langle}
\def\ra{\rangle}
\def\Op{{\Omega^+}}
\def\Om{{\Omega^-}}
\def\bup{\mathbf{u^+}}
\def\bum{\mathbf{u^-}}
\def\bU{\mathbf{U}}
\def\R{\mathbb{R}}
\newtheorem{thm}{Theorem}
\newtheorem{lemma}{Lemma}
\newtheorem{fact}{Fact}
\begin{document}

\author{George Avalos \\
Department of Mathematics\\
University of Nebraska-Lincoln \and Paula Egging \\
Department of Mathematics\\
University of Nebraska-Lincoln}
\title{An inf-sup approach to semigroup wellposedness for a compressible flow--incompressible fluid interactive PDE system}
\maketitle

\begin{abstract}
This work presents qualitative and numerical results on a system of partial differential equations (PDEs) which models certain fluid-fluid interaction dynamics. This system models a compressible fluid in a domain $\Op \subset \R^2$, coupled to an incompressible fluid modeled by Stokes flow in domain $\Om \subset \R^2$, with the strong coupling implemented through certain boundary conditions on the shared interface, $\Gamma$. The wellposedness of this system is established by means of constructing for it a semigroup generator representation. This representation is accomplished by eliminating one of the pressure variables via identifying it as the solution of a certain boundary value problem, while the wellposedness is established via a nonstandard usage of the Babuska-Brezzi Theorem. In later sections, we demonstrate how the earlier constructive proof of wellposedness naturally lends itself to a certain finite element method (FEM), by which to numerically approximate solutions of the given coupled PDE system. This FEM is provided with error estimates and associated rates of convergence.

\vskip.3cm \noindent \textbf{Key terms:} Fluid-Fluid Interaction, Strongly Continuous Semigroups, Inf-Sup Approaches, Finite Element Interpolation
\end{abstract}


\section{Introduction}

In this work, we consider a system of partial differential equations (PDEs) which has practical applications in modeling various fluid-fluid interactions which occur in nature, e.g. the interaction between the ocean and atmosphere, two different fronts of air, etc.  A proper modeling of such phenomena can aid in the understanding of current dynamics in the climate, as well as to project models of future climate so as to aid in energy planning (see \cite{Lemarie}). Of course, for fully coupled climate models, the monolithic solution, which treats the entire system as a single unified system, is desired. However, for practical considerations, the full model is always partitioned into submodels, with each component simulated separately. The coupling of these components is achieved through the exchange of boundary condition information in such a way that certain interface conditions are satisfied.
In such models, it is natural to assume the atmosphere (gaseous) can be modeled by a compressible fluid, while the ocean (water) is modeled by an incompressible (divergence-free) fluid (see \cite{Skamarock}). 
The focus of Sections 2-5 is on the abstract mathematical results demonstrating the Hadamard wellposedness of such a fluid-fluid interactions system. Section 6 describes how the proof method employed in previous sections lends itself to the derivation of a numerical method to generate approximate solutions to the given PDE system.  

\section{The PDE and Setting for Wellposedness}
 Towards developing the PDE system under consideration, let $\Op \subset \R^2$ and $\Om \subset \R^2$ be two domains which are open, bounded, disjoint, and convex. Suppose further that they share a common interface $\Gamma$, on which certain (to be specified) boundary transmission conditions will exert a strong coupling between the flows in $\Op$ and $\Om$. 

\begin{center}
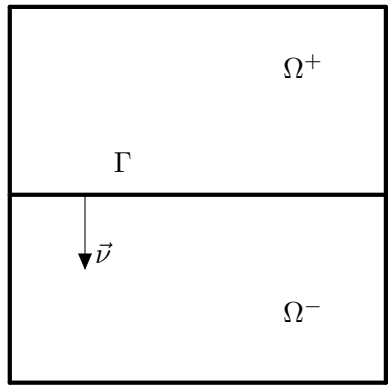
\begin{figure}[h]
\hspace{1.5in}
\begin{tikzpicture}[line cap=round,line join=round,>=triangle 45,x=5.0cm,y=5.0cm]
\clip(-0.25,-0.25) rectangle (1.25,1.25);
\draw(0.,0.) -- (0.,1.) -- (1.,1.) -- (1.,0.) -- cycle;
\draw [line width=1.6pt] (0.,0.)-- (0.,1.);
\draw [line width=1.6pt] (0.,1.)-- (1.,1.);
\draw [line width=1.6pt] (1.,1.)-- (1.,0.);
\draw [line width=1.6pt] (1.,0.)-- (0.,0.);
\draw [line width=1.6pt] (0.,0.5)-- (1.,0.5);
\draw [->] (0.2,0.5) -- (0.2,0.3);
\draw (0.7,0.9) node[anchor=north west] {$\Omega^+$};
\draw (0.7,0.25) node[anchor=north west] {$\Omega^-$};
\draw (0.2,0.4) node[anchor=north west] {$\vec{\nu}$};
\draw (0.25,0.64) node[anchor=north west] {$\Gamma$};
\end{tikzpicture}
\caption{The fluid-fluid geometry.}
 \label{fig:omega}
\end{figure}
\end{center}

With the geometry given, we now introduce the fluid-fluid interaction problem of present concern. The  dependent variables $\bup = [u^+_1(t,x), u^+_2(t,x)]^T$ on $\Op$ and $\bum = [u^-_1(t,x), u_2^-(t,x)]^T$ on $\Om$ will represent the fluid velocity fields, while $p^+(t,x)$ and $p^-(t,x)$  represent the scalar-valued pressure terms in $\Op$ and $\Om$, respectively. With these variables,  consider the boundary value problem given by 

\begin{align}  \label{full_sys}
& \begin{cases}
\mathbf{u}_t^+  + \bU\cdot \grad \bup - \div\, \sigma(\bup) + \grad p^+ = 0 &  \text{ on } \Omega^+ \times (0,T),\\
p_t^+ +  \bU \cdot \grad p^+ + \div(\bup) = 0 & \text{ on } \Omega^+ \times (0,T),\\
\bup = 0 & \text{ on } (\partial \Omega^+ \setminus \Gamma ) \times (0,T),
\end{cases}\\
& \label{um_equns}
\begin{cases}
\mathbf{u}_t^- - \Lap \bum +  \grad p^- = 0 &  \hspace{1.14in} \text{ on } \Omega^- \times (0,T),\\
\div(\bum) = 0 &  \hspace{1.14in} \text{ on } \Omega^- \times (0,T),\\
\bum = 0 &  \hspace{1.14in} \text{ on } (\partial \Omega^- \setminus \Gamma) \times (0,T),
\end{cases}\\
&
\begin{cases} \label{bcs}
\bup = \bum & \hspace{.285in} \text{ on } \Gamma \times (0,T),\\
\sigma(\bup) \vec{\nu} - p^+ \vec{\nu} = \frac{\partial \bum }{\partial \vec{\nu}} - p^- \vec{\nu}&\hspace{.285in}   \text{ on } \Gamma \times (0,T),\\
\bup(t = 0) = \mathbf{u}_0^+; \, \, \, \bum(t = 0) = \mathbf{u}^-_0,&
\end{cases}
\end{align}
where $\vec{\nu}$ is the outward normal vector with respect to $\Omega^+$ (and hence on $\Gamma$, points into $\Omega^-$) and the flow is linearized about a rest state with nonzero ambient background flow given by  $\bU = [U_1, U_2]^T \in \textbf{H}^2(\Omega^+)
 \cap \{\mathbf{ v} \in \textbf{L}^2(\Omega^+): \mathbf{v} \cdot \vec{\nu} = 0 \text{ on } \partial \Omega^+\}$.

Furthermore, the term $\sigma(\mathbf{u})$ is the classic \textit{stress tensor} of the fluid and is given by $$\sigma(\mathbf{u}) = 2 \nu \epsilon(\mathbf{u}) + \tilde\lambda[I_2 \cdot \epsilon(\mathbf{u})]I_2,$$
where the strain tensor $\epsilon$ is given by 
$$\epsilon_{ij}(\mathbf{u}) = \frac12 \left( \frac{\partial u_j}{\partial x_i} + \frac{\partial u_i}{\partial x_j} \right), \, 1 \leq i, j, \leq 2,$$
and $\nu > 0$ and $\tilde\lambda \geq 0$ are Lam\'{e} coefficients of viscosity (see pg. 129 in \cite{Kesavan}). Given this notation, one can verify that $$
\div \, \sigma( \mathbf{u}) = \nu \Lap \mathbf{u} + (\nu + \tilde\lambda) \grad \div(\mathbf{u}).
$$
Throughout, the bold-faced spaces, such as $\mathbf{L}^2(\Op)$, represent spaces of vector-valued functions, while non-bold font, such as $L^2(\Op)$, will represent a space of scalar-valued functions.

\section{Elimination of Pressure $p^-$}

As stated earlier, we aim to establish Hadamard well-posedness of the linearized coupled system given in (\ref{full_sys}) - (\ref{bcs}) for initial data $[\mathbf{u}^+_0, p^+(t = 0), \mathbf{u}^-_0]$ in the natural space of finite energy, $\mathcal{H} = \mathbf{L}^2(\Op) \times L^2(\Op) \times \{  \mathbf{f} \in \mathbf{L}^2(\Om): \div(\mathbf{f}) = 0 \text{ and } \mathbf{f} \cdot \vec{\nu} |_{\partial \Om \setminus \Gamma} = 0 \}$. Here, $\mathcal{H}$ is a Hilbert space with topology given by the following inner product: 
\beq
(\mathbf{y}_1, \mathbf{y}_2)_\mathcal{H} = (\mathbf{u}_1, \mathbf{u}_2)_{\Op} + (p_1, p_2)_{\Op} + (\mu_1, \mu_2)_{\Om}
\eeq
for any $\mathbf{y}_i = (\mathbf{u}_i, p_i, \mu_i) \in \mathcal{H}$, $i = 1,2$. Here, of course, we use the notation $(f,g)_{\Omega} = \int fg \, d\Omega$ (or vector-valued, as appropriate).

As a first step, the $p^-$ variable is eliminated by identifying it as the solution of a certain elliptic boundary value problem, which allows for an explicit semigroup generator formulation of the fluid-fluid model in (\ref{full_sys}) - (\ref{bcs}). In fact, the generator $\mathcal{A}: \mathcal{D}(\mathcal{A}) \subset \mathcal{H} \to \mathcal{H}$ is given explicitly in (\ref{genA}). 

Towards determining the appropriate boundary value problem to eliminate $p^-$, take the divergence of the first equation in (\ref{um_equns}), which gives
$$
(\div(\bum))_t - \Lap(\div(\bum)) + \Lap p^-  = 0\text{ in } \Omega^- \times (0,T).
$$

Combining this with $\div (\bum) = 0$ from (\ref{um_equns}), we have then \begin{equation}
\Lap p^- = 0  \text{ on } \Omega^- \times (0,T). 
\end{equation}

Now, the second equation in (\ref{bcs}) dotted with $\vec{\nu}$ on $\Gamma$ gives:
\beq 
p^- = \frac{\partial \bum}{\partial \vec{\nu}}  \cdot \vec{\nu} - [\sigma(\bup) \vec{\nu}] \cdot \vec{\nu} + p^+ \text{ on } \Gamma \times (0,T). 
\eeq

And finally, the first equation in (\ref{um_equns}) dotted with $\vec{\nu}$ and restricted to $\partial \Omega^- \setminus \Gamma$ gives
$$
\mathbf{u}_t^- \cdot \vec{\nu} - \Lap \bum \cdot \vec{\nu} + \grad p^- \cdot \vec{\nu} = 0 \text{ on }\partial \Omega^- \setminus \Gamma \times (0,T).
$$
Since $\bum = 0 \text{ on } \partial \Omega^- \setminus \Gamma$, it follows that $\mathbf{u}_t^- = 0$ on $\partial \Omega^- \setminus \Gamma \times (0,T)$ and so
\beq 
\frac{\partial p^-}{\partial \vec{\nu}} = \Lap \bum \cdot \vec{\nu} \text{ on } \partial \Omega^- \setminus \Gamma \times (0,T).
\eeq

So our boundary value problem to eliminate $p^-$ is
\begin{equation}\label{bvp_for_p}
\begin{cases}
\Lap p^- = 0 & \text{ on } \Omega^- \times (0,T),\\
p^- = \frac{\partial \bum }{\partial \vec{\nu}} \cdot \vec{\nu} - [\sigma(\bup) \vec{\nu}] \cdot \vec{\nu} + p^+  &\text{ on } \Gamma \times (0,T), \\
\frac{\partial p^-}{\partial \vec{\nu}} = \Lap \bum \cdot \vec{\nu} & \text{ on } \partial \Omega^- \setminus \Gamma \times (0,T).
\end{cases}
\end{equation}

Let the Dirichlet and Neumann maps, $D_s:{L}^2(\Gamma) \to {L}^2(\Omega^-)$ and \\$N_s: {L}^2\left(\partial \Omega^- \setminus \Gamma \right) \to {L}^2(\Omega^-)$, respectively, be given by

\beq \label{DirichletMap}
{h} = D_s({\varphi})  \iff \begin{cases}
\Lap {h} = {0} & \text{ on } \Omega^-,\\ 
{h} = {\varphi}  & \text{ on } \Gamma, \\
\frac{\partial {h} }{\partial \vec{\nu}} = {0} & \text{ on } \partial \Omega^- \setminus \Gamma,
\end{cases}
\eeq

and 
\beq 
{h} = N_s({\varphi})  \iff \begin{cases}
\Lap {h} = {0} & \text{ on } \Omega^-,\\ 
{h} = {0}  & \text{ on } \Gamma, \\
\frac{\partial {h} }{\partial \vec{\nu}} = {\varphi} & \text{ on } \partial \Omega^- \setminus \Gamma,
\end{cases}
\eeq
and so each of these maps gives a harmonic extension of boundary data.  
Note that both of these maps are well-defined by standard elliptic PDE theory (see \cite{Lasiecka&Triggiani}). Moreover, by elliptic regularity (see \cite{Lions&Magenes}), these maps also satisfy $D_s \in \mathcal{L}({H}^{-1/2}(\Gamma), {L}^2(\Om))$ and $N_s \in \mathcal{L}({H}^{-3/2}(\Gamma), {L}^2(\Om))$. 

With these maps in hand, we then have that the solution to (\ref{bvp_for_p}), for fixed $t \in (0,T)$ is given by
\beq \label{pm_cond}
p^-(t) = D_s \left(\frac{\partial \bum(t)}{\partial \vec{\nu}} \cdot \vec{\nu} - [\sigma(\bup(t)) \vec{\nu}] \cdot \vec{\nu} + p^+(t)  \right)  + N_s(\Lap \bum(t) \cdot \vec{\nu}) \text{ in } \Om.
\eeq

We claim that $p^-$, as defined in (\ref{pm_cond}), is in $L^2(\Om)$. To this end, we note the following fact. 

\begin{fact}

For ${f} \in {L}^2(\Om)$, if ${z}$ solves 
\beq
\begin{cases} \label{Grisvard_result}
\Lap {z} = {f} & \text{ in } \Om,\\
{z} |_\Gamma = {0} & \text{ on } \Gamma, \\
\frac{\partial {z}}{\partial \vec{\nu}}|_{\partial \Om \setminus \Gamma} = 0 & \text{ on } \partial \Om \setminus \Gamma,
\end{cases}
\eeq
then ${z} \in {H}^2(\Om)$  and $||{z}||_{{H}^2(\Om)} \leq c||{f}||_{{L}^2(\Om)}$ (see \cite{Grisvard2}, with rectangular domain).

\end{fact}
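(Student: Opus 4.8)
The statement is elliptic regularity for a mixed Dirichlet--Neumann problem for the Laplacian on a convex polygon, so the plan is to first produce a weak solution by a variational argument and then upgrade it to $H^2$ via the corner-singularity theory of Grisvard cited above. I would fix the space $V = \{v \in H^1(\Om) : v|_\Gamma = 0\}$ and the weak formulation of (\ref{Grisvard_result}): find $z \in V$ with $(\nabla z, \nabla v)_{\Om} = -(f,v)_{\Om}$ for all $v \in V$, the condition $\frac{\partial z}{\partial \vec{\nu}} = 0$ on $\partial \Om \setminus \Gamma$ being natural. Since $\Gamma$ carries positive surface measure, the Poincar\'e--Friedrichs inequality makes the Dirichlet form coercive on $V$, so Lax--Milgram yields a unique $z \in V$, and testing with $v = z$ gives $\|z\|_{H^1(\Om)} \le c\|f\|_{L^2(\Om)}$.

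For the $H^2$ conclusion, interior regularity and regularity up to the relatively open flat pieces of $\partial \Om$ are classical (difference-quotient estimates together with the Agmon--Douglis--Nirenberg theory), so the only possible obstruction to $z \in H^2(\Om)$ lies at the four right-angle corners of the rectangle $\Om$. Here I would invoke Grisvard's local decomposition: near a corner of opening $\omega$ at which the prescribed boundary conditions are of the same type --- which happens at the two corners on $\partial \Om \setminus \Gamma$ (Neumann meeting Neumann) --- the candidate singular exponents are $k\pi/\omega$, $k \ge 1$; at a corner where the type switches --- the two corners on $\overline{\Gamma}$, where the Dirichlet datum on $\Gamma$ meets the Neumann datum on a vertical side --- they are $(2k-1)\pi/(2\omega)$, $k \ge 1$. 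Because $\omega = \pi/2$ at every corner, these exponents are respectively $\{2,4,\dots\}$ and $\{1,3,\dots\}$, all $\ge 1$, so no singular component with exponent in $(0,1)$ occurs and $z \in H^2(\Om)$. The quantitative bound $\|z\|_{H^2(\Om)} \le c\|f\|_{L^2(\Om)}$ then follows either directly from Grisvard's estimate or, abstractly, by applying the closed graph theorem to the now globally defined linear solution operator $f \mapsto z$ from $L^2(\Om)$ into $H^2(\Om)$, uniqueness guaranteeing that its graph is closed.

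The one mildly delicate point I anticipate is that the two mixed corners sit exactly at the borderline angle $\omega = \pi/2$, where the smallest singular exponent equals $1$ rather than strictly exceeding it, so one is at the edge of, not strictly inside, the usual $H^2$ range in Grisvard's statement. I would dispatch this by observing that the exponent-$1$ candidate singular function is in fact affine (and harmonic), hence smooth, and by checking that an $L^2$ right-hand side generates no resonant $r\log r$ correction at such a corner --- a verification that for the rectangle $\Om$ can also be carried out explicitly by separating variables (a cosine series in the direction tangent to $\Gamma$, matched to the Neumann conditions on the vertical sides, reducing the problem to a family of well-behaved two-point ODE problems). This borderline bookkeeping is the only real obstacle; the remainder is routine.
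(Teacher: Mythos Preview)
The paper does not actually prove this statement: it records it as a \emph{Fact} and defers entirely to the citation \cite{Grisvard2} (``with rectangular domain''), using it as a black box to justify the transposition argument for $p^- \in L^2(\Om)$ and, later, the isomorphism property of the mixed Laplacian $A_M$ in the proof of Lemma~1. So there is no in-paper proof to compare against; your proposal supplies what the paper simply quotes.

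That said, your outline is sound and tracks Grisvard's framework accurately. The Lax--Milgram step and the localization to the corners are routine, and your computation of the singular exponents is correct: $2k$ at the two Neumann--Neumann corners and $2k-1$ at the two Dirichlet--Neumann corners of the rectangle, so the only potentially dangerous exponent is $\lambda=1$ at the mixed corners. Your observation that the corresponding singular function $r\sin\theta$ is linear (hence smooth) is exactly the right way to dispose of the borderline, and your separation-of-variables fallback --- a cosine expansion in the tangential variable matching the Neumann sides, reducing to decoupled two-point ODEs in the normal variable --- is a clean, self-contained verification that no $r\log r$ term appears and that the $H^2$ estimate holds with a uniform constant. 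This is precisely the kind of explicit argument available for rectangles that the paper's citation is gesturing at; you have correctly identified both the only genuine subtlety and a concrete way to resolve it.
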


 Then, by the method of transposition (see \cite{Lions&Magenes}) and the availability of (\ref{Grisvard_result}), we have that, for $h \in H^{-1/2}(\Gamma)$ and $g \in H^{-3/2}(\partial \Om \setminus \Gamma)$, if $p^-$ satisfies 
\beq
\begin{cases}
\Lap p^- = 0 & \text{ in } \Om,\\
p^- = h & \text{ on } \Gamma, \\
\frac{\partial p^-}{\partial \vec{\nu}} = g & \text{ on } \partial \Om \setminus \Gamma, 
\end{cases}
\eeq
then $p^- \in L^2(\Om)$, giving the desired regularity of $p^-$. 

To show this regularity explicitly for the Dirichlet map, the lemma below parallels Lemma 6.1 in (\cite{AGW2}). Results for the Neumann map would follow similarly. 
\begin{lemma}
The Dirichlet map, $D_s$, as defined in (\ref{DirichletMap}), is an element of $\mathcal{L}(H^{-1/2}(\Gamma), L^2(\Om))$.

\end{lemma}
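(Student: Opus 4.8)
## Proof Strategy

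The plan is to establish the bound $\|D_s \varphi\|_{L^2(\Om)} \leq c \|\varphi\|_{H^{-1/2}(\Gamma)}$ by the method of transposition (duality), leveraging the $H^2$-regularity result quoted in the Fact. The key idea is that the solution operator for the homogeneous-data problem in the Fact is, in an appropriate sense, adjoint to $D_s$, and its $H^2$-regularity translates into the claimed mapping property for $D_s$.

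First I would take $\varphi \in C^\infty(\Gamma)$ (or smooth enough that classical elliptic theory gives $D_s\varphi \in H^2(\Om)$), so all the integrations by parts below are justified; the general case follows by density of smooth functions in $H^{-1/2}(\Gamma)$. Next, fix an arbitrary $f \in L^2(\Om)$ and let $z$ be the solution of (\ref{Grisvard_result}) with this right-hand side, so that by the Fact we have $z \in H^2(\Om)$ with $\|z\|_{H^2(\Om)} \leq c\|f\|_{L^2(\Om)}$. Then I would compute $(D_s\varphi, f)_{\Om} = (D_s\varphi, \Lap z)_{\Om}$ and integrate by parts twice (Green's second identity) on $\Om$. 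Since $\Lap(D_s\varphi) = 0$ in $\Om$, the volume terms drop out, and the boundary terms on $\partial\Om \setminus \Gamma$ vanish because $\partial_\nu(D_s\varphi) = 0$ there and $z = 0$ there (by the boundary conditions in (\ref{DirichletMap}) and (\ref{Grisvard_result}) respectively). What survives is a pairing on $\Gamma$ only:
\beq
(D_s\varphi, f)_{\Om} = \left\langle D_s\varphi, \frac{\partial z}{\partial \vec{\nu}} \right\rangle_{\Gamma} - \left\langle \frac{\partial (D_s\varphi)}{\partial \vec{\nu}}, z \right\rangle_{\Gamma} = \left\langle \varphi, \frac{\partial z}{\partial \vec{\nu}} \right\rangle_{\Gamma},
\eeq
where in the last step I used $D_s\varphi = \varphi$ on $\Gamma$ and $z|_\Gamma = 0$. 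Now the pairing on the right is estimated by $\|\varphi\|_{H^{-1/2}(\Gamma)} \|\partial_\nu z\|_{H^{1/2}(\Gamma)}$, and by the trace theorem $\|\partial_\nu z\|_{H^{1/2}(\Gamma)} \leq c\|z\|_{H^2(\Om)} \leq c\|f\|_{L^2(\Om)}$. Hence $|(D_s\varphi, f)_{\Om}| \leq c\|\varphi\|_{H^{-1/2}(\Gamma)}\|f\|_{L^2(\Om)}$ for every $f \in L^2(\Om)$, and taking the supremum over $f$ with $\|f\|_{L^2(\Om)} = 1$ yields $\|D_s\varphi\|_{L^2(\Om)} \leq c\|\varphi\|_{H^{-1/2}(\Gamma)}$. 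A density argument in $\varphi$ then extends $D_s$ to a bounded operator on all of $H^{-1/2}(\Gamma)$, which is the assertion.

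The main obstacle I anticipate is the careful bookkeeping of the boundary terms in Green's identity on the mixed-boundary domain $\Om$: one must verify that the traces and normal derivatives appearing in the integration by parts are well-defined in the right spaces (here the $H^2$-regularity of $z$ from the Fact is exactly what makes $\partial_\nu z|_\Gamma \in H^{1/2}(\Gamma)$ meaningful), and that the contributions on $\partial\Om \setminus \Gamma$ genuinely cancel rather than merely being formally zero. A secondary point requiring care is that the duality pairing $\langle \cdot, \cdot\rangle_\Gamma$ between $H^{-1/2}(\Gamma)$ and $H^{1/2}(\Gamma)$ is consistent with the $L^2(\Gamma)$ inner product used when $\varphi$ is smooth, so that the identity derived for smooth data genuinely characterizes the transposed operator; this is standard but should be acknowledged.
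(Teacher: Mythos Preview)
Your proposal is correct and follows essentially the same transposition argument as the paper; the paper packages it in operator-theoretic form (defining the mixed Laplacian $A_M$ with domain $H^2(\Om)\cap H^1_\Gamma(\Om)$ and writing the solution as $h=-A_M^{-1}\ell_\varphi$ via the isomorphism $A_M:L^2(\Om)\to[\mathcal D(A_M)]'$), but the substance is identical to your Green's-identity computation. Two small points are worth correcting. First, in your boundary bookkeeping on $\partial\Om\setminus\Gamma$ you wrote ``$z=0$ there,'' but the condition from (\ref{Grisvard_result}) on that portion is $\partial_\nu z=0$, not $z=0$; it is this Neumann condition that kills the term $\langle D_s\varphi,\partial_\nu z\rangle_{\partial\Om\setminus\Gamma}$, while $\partial_\nu(D_s\varphi)=0$ kills the other. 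Second, your appeal to ``the trace theorem'' for $\partial_\nu z|_\Gamma\in H^{1/2}(\Gamma)$ is not automatic from $z\in H^2(\Om)$ on a merely Lipschitz domain (one is only guaranteed $L^2$ normal traces); the paper devotes its Step~2 to this issue, invoking a result of Buffa et al.\ on polyhedral domains, though in the present rectangular geometry it also follows directly because $\vec\nu|_\Gamma$ is constant, so $\partial_\nu z$ is just a fixed component of $\nabla z\in \mathbf H^1(\Om)$.
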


\begin{proof}
\textbf{Step 1.} We begin by defining the mixed Laplacian, $A_M: \mathcal{D}(A_M) \subset {L}^2(\Om) \to {L}^2(\Om)$ with homogeneous mixed boundary conditions by 
\beq 
A_M \, {v} = - \Lap {v}, \, \, {D}(A_M) = {H}^2(\Om) \cap {H}^1_{\Gamma}(\Om).
\eeq

As defined, $A_M$ is a positive definite, self-adjoint operator. Moreover, since the domain $\Om$ is rectangular, then by \cite{Grisvard2}, we have that 
\begin{center}
$A_M$ is an isomorphism from $\mathcal{D}(A_M)$ onto ${L}^2(\Om) $. 
\end{center}
In turn, by duality, we have that 
\begin{center} \label{isomorphism}
$A_M$ is an isomorphism from ${L}^2(\Om)$ onto $[\mathcal{D}(A_M)]'$. 
\end{center}

\textbf{Step 2.} Since $\Om$ has  a Lipschitz boundary, then for a given $v \in \mathcal{D}(A_M)$, its normal derivative $\left.\frac{\partial v}{\partial \vec{\nu}}\right|_{\Gamma}$ is only assured to be in ${L}^2(\partial \Om)$ (see \cite{necas2011direct}). However, from the recent result in \cite{BUFFA2001699}, we have that for any ${f} \in {H}^2(\Om)$, 
\beq
\grad_{\partial \Om} {f} + \frac{\partial {f}}{\partial \vec{\nu}} \vec{\nu} \in {H}^{1/2} (\partial \Om),
\eeq
where $[\grad_{\partial \Om} {f}]_{\partial \Om}$ denotes the tangential gradient of ${f}|_{\partial \Om}$ (see Theorem 5 of \cite{BUFFA2001699}). Since $\vec{\nu}|_\Gamma= [0,1]$, we then infer that, in particular, 
\beq  \label{inH12}
\left. \frac{\partial {v} }{\partial \vec{\nu}} \right|_{\Gamma} \in {H}^{1/2}(\Gamma), \, \text{ for every } {v} \in \mathcal{D}(A_M).
\eeq

\textbf{Step 3.} Given boundary function $\varphi \in {L}^2(\Gamma)$, we denote its extension by zero via 
$$
\varphi_{ext} = \begin{cases} 
0, &\text{ on } \partial \Om \setminus \Gamma\\
\varphi, &\text{ on } \Gamma.
\end{cases}
$$

Therewith, we define the linear functional $\ell_{\varphi}$, by having for any ${v} \in \mathcal{D}(A_M)$,
\begin{align}
\ell_\varphi({v}) & = \left( \varphi_{ext}, \frac{\partial {v}}{\partial \vec{\nu}} \right)_{\partial \Om} = \left( \varphi,  \frac{\partial {v}}{\partial \vec{\nu}} \right)_{\Gamma} \nonumber \\
& = \la \varphi,  \frac{\partial {v}}{\partial \vec{\nu}} \ra_{{H}^{-1/2}(\Gamma) \times {H}^{1/2}(\Gamma)},
\end{align}
where in the last equality, we are using (\ref{inH12}) and the fact that ${H}^{1/2} (\Gamma) = {H}^{1/2}_0(\Gamma)$ (see Theorem 3.40 in \cite{mclean2000strongly}). An estimation of this right hand side, via (\ref{inH12}) and the Closed Graph Theorem, then gives for a given $\varphi \in {L}^2(\Gamma)$, 
\beq 
|\ell_\varphi({v})| \leq C ||\varphi||_{{H}^{-1/2}(\Gamma)} ||{v}||_{\mathcal{D}(A_M)}.
\eeq
A subsequent extension by continuity gives that, for any $\varphi \in {H}^{-1/2}(\Gamma)$, $\ell_\varphi$ as given by 
\beq \label{inH12fullspace}
\ell_\varphi({v}) = \left( \varphi_{ext}, \frac{\partial {v}}{\partial \vec{\nu}} \right)_{\partial \Om} = \left< \varphi,  \frac{\partial {v}}{\partial \vec{\nu}} \right>_{{H}^{-1/2}(\Gamma) \times {H}^{1/2}(\Gamma)} \text{ for all } {v} \in \mathcal{D}(A_M),
\eeq
is an element of $[\mathcal{D}(A_M)]'$, with the estimate

\beq  \label{dualbound}
||\ell_\varphi||_{[\mathcal{D}(A_M)]'} \leq C ||\varphi||_{{H}^{-1/2}(\Gamma)}.
\eeq

\textbf{Step 4. } Via transposition, the problem of finding ${h} \in {L}^2(\Om)$ which solves the boundary value problem (\ref{DirichletMap}) with boundary data $\varphi \in {H}^{-1/2}(\Gamma)$ is the problem of finding ${h} \in {L}^2(\Om)$ which solves the relation 
\beq 
-({h}, A_M{v})_\Om = \ell_\varphi({v}), \, \, \text{ for every } {v} \in \mathcal{D}(A_M), 
\eeq
where $\ell_\varphi(\cdot)$ as given by (\ref{inH12fullspace}) is a well-defined element of $[\mathcal{D}(A_M)]'$, by Step 3. Accordingly, we can use (\ref{isomorphism}) and (\ref{dualbound}) to express the solution ${h} \in {L}^2(\Om)$ as 
\begin{align*}
&{h} = -A_M^{-1} \ell_\varphi \in {L}^2(\Om) \text{ with }\\
& ||{h}||_{{L}^2(\Om)} \leq ||A_M^{-1}||_{\mathcal{L}([\mathcal{D}(A_M)]', {L}^2(\Om))} ||\ell_\varphi||_{[\mathcal{D}(A_M)]'} \leq C_2 ||\varphi||_{{H}^{-1/2}(\Gamma)}.
\end{align*}

\end{proof}

With these Dirichlet and Neumann maps, $D_s$ and $N_s$, and their regularity established, we now define the linear maps $G_1$, $G_2$, and $G_3$ via
\beq
G_1\bum = - \grad \left( D_s\left( \frac{\partial \bum}{\partial \vec{\nu}} \cdot \vec{\nu} \right)  + N_s (\Lap \bum \cdot \vec{\nu}) \right), 
\eeq
\beq
G_2 \bup = - \grad \left( D_s([\sigma(\bup) \vec{\nu}] \cdot \vec{\nu} ) \right),
\eeq
\beq 
G_3 p^+ = - \grad(D_s(p^+)).
\eeq
Thus, from the first equation in  (\ref{um_equns}), we have that 
\begin{align}
\mathbf{u}_t^- & = \Lap \bum - \grad p^-  \nonumber \\
& = \Lap \bum + G_1 \bum + G_2 \bup + G_3 p^+ \text{ on } \Omega^+ \times (0,T).
\end{align}
 
\section{Main Result}

With the elimination of the $p^-$ variable, we now move towards establishing our main result. Bringing the first and second equations in (\ref{full_sys}) and (1.15) together yields the system
\beq 
\begin{cases}
\mathbf{u}_t^+ = - \bU \cdot \grad \bup + \div \,\sigma(\bup) - \grad p^+ & \text{ on } \Omega^+ \times (0,T),\\
p_t^+ = -\div(\bup) - \bU \cdot \grad p^+ & \text{ on } \Omega^+ \times (0,T),\\
\mathbf{u}_t^- = G_2 \bup + G_3 p^+ + \Lap \bum + G_1 \bum & \text{ on } \Omega^- \times (0,T).
\end{cases}
\eeq
This gives rise to the following system of equations
\begin{align}
\frac{d}{dt} \begin{bmatrix} \bup \\ p^+ \\ \bum \end{bmatrix} 
& = \bbm -\bU \cdot \grad \bup + \div \, \sigma(\bup) - \grad p^+ \\
- \div(\bup) - \bU \cdot \grad p^+ \\
G_2 \bup + G_3 p^+ + \Lap \bum + G_1\bum
\ebm \nonumber \\
& = \bbm -\bU \cdot \grad (\cdot) + \div \, \sigma(\cdot) &  - \grad (\cdot) & 0 \\
- \div(\cdot) & - \bU \cdot \grad(\cdot) & 0 \\
G_2 &  G_3 &  \Lap(\cdot) + G_1
\ebm 
\bbm \bup \\ p^+\\ \bum \ebm.
\end{align}

This is equivalent to the following system
\beq 
\frac{d}{dt}  \begin{bmatrix} \bup \\ p^+ \\ \bum \end{bmatrix} 
 = \mathcal{A}  \begin{bmatrix} \bup \\ p^+ \\ \bum \end{bmatrix}, 
\eeq

where \beq \label{genA} \mathcal{A} = \bbm -\bU \cdot \grad (\cdot) + \div \, \sigma(\cdot) &  - \grad (\cdot) & 0 \\
- \div(\cdot) & - \bU \cdot \grad(\cdot) & 0 \\
G_2 &  G_3 &  \Lap(\cdot) + G_1
\ebm. \eeq

Of course, if $\mathcal{A}: \mathcal{D}(\mathcal{A}) \subset  \mathcal{H} \to \mathcal{H}$ is to generate a $C_0$-semigroup of contractions - allowing for solutions $[\bup(t), p^+(t), \bum(t)]$ of (\ref{full_sys}) - (\ref{bcs}) to be obtained by applying $\{e^{\mathcal{A}t}\}$ to initial data $[\mathbf{u}_0^+, p^+(t=0), \mathbf{u}_0^-]  $ - then the domain of $\mathcal{A}$, $\mathcal{D}(\mathcal{A})$, should be constructed to allow for such a semigroup. In particular, we need $\mathcal{D}(\mathcal{A})$ so that $\mathcal{A}:  \mathcal{H} \to  \mathcal{H} $ is maximal dissipative with respect to $\mathcal{D}(\mathcal{A})$, allowing one to apply the Lumer-Phillips Theorem to obtain the $C_0$-semigroup of contractions. 

Prior to specifying the domain $\mathcal{D}(\A)$, we first need some preliminaries. The proofs of Lemma \ref{lemma:lemma1} and Lemma \ref{lemma:lemma2} can be found in \cite{Avalos&Dvorak}.  

\begin{lemma}\label{lemma:lemma1} \cite{Avalos&Dvorak} 
Suppose an $L^2(\Om)$-function $\rho$ satisfies $\Lap \rho \in L^2(\Om)$. Then one has the following boundary trace estimate:
\beq
\left| \left|\rho|_{\partial \Om} \right| \right|_{H^{-1/2}(\partial \Om)} +\left| \left|  \left. \frac{\partial \rho}{\partial \vec{\nu}} \right|_{\partial \Om}  \right| \right|_{H^{-3/2}(\partial \Om)}  \leq C\left\{ ||\rho||_{L^2(\Om)} + ||\Lap \rho||_{L^2(\Om)}\right\}.
\eeq
\end{lemma}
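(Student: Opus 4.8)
The plan is to construct the two boundary traces of $\rho$ directly through the generalized Green formula and to estimate them by duality; this is also the route taken in the cited argument of \cite{Avalos&Dvorak}. Throughout, set $E(\Om) = \{\rho \in L^2(\Om): \Lap\rho \in L^2(\Om)\}$ with the graph norm $\|\rho\|_E^2 = \|\rho\|_{L^2(\Om)}^2 + \|\Lap\rho\|_{L^2(\Om)}^2$, which is precisely the right-hand side of the asserted inequality; it suffices to establish the estimate for $\rho$ smooth on $\overline{\Om}$ and then invoke density of $H^2(\Om)$ (equivalently $C^\infty(\overline{\Om})$) in $E(\Om)$.

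For $\rho \in E(\Om)$ and $\phi \in H^2(\Om)$, Green's second identity reads
$$\int_{\Om}(\Lap\rho)\phi \, d\Om - \int_{\Om}\rho(\Lap\phi)\, d\Om = \left\langle \frac{\partial\rho}{\partial\vec{\nu}},\, \phi\right\rangle_{\partial\Om} - \left\langle \rho,\, \frac{\partial\phi}{\partial\vec{\nu}}\right\rangle_{\partial\Om}.$$
Taking $\phi \in C^\infty_c(\Om)$ makes both boundary terms vanish, and the resulting interior identity $\int_\Om (\Lap\rho)\phi = \int_\Om \rho(\Lap\phi)$ persists for all $\rho \in E(\Om)$ by definition of the distributional Laplacian; hence the right-hand side of the displayed identity depends on $\phi$ only through its Cauchy data $\bigl(\phi|_{\partial\Om},\, \frac{\partial\phi}{\partial\vec{\nu}}|_{\partial\Om}\bigr) \in H^{3/2}(\partial\Om)\times H^{1/2}(\partial\Om)$. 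This lets one \emph{define} $\rho|_{\partial\Om}$ as a functional on $H^{1/2}(\partial\Om)$ (pairing with the Neumann slot) and $\frac{\partial\rho}{\partial\vec{\nu}}|_{\partial\Om}$ as a functional on $H^{3/2}(\partial\Om)$ (pairing with the Dirichlet slot), once one can realize prescribed boundary data by $H^2$ test functions. Concretely, for $g$ in a dense subspace of $H^{1/2}(\partial\Om)$ choose $\phi_g \in H^2(\Om)$ with $\phi_g|_{\partial\Om}=0$, $\frac{\partial\phi_g}{\partial\vec{\nu}}|_{\partial\Om}=g$, and $\|\phi_g\|_{H^2(\Om)} \le C\|g\|_{H^{1/2}(\partial\Om)}$; then Green's identity collapses to $\langle \rho|_{\partial\Om}, g\rangle_{\partial\Om} = \int_\Om \rho\,\Lap\phi_g - \int_\Om(\Lap\rho)\phi_g$, so $|\langle \rho|_{\partial\Om}, g\rangle| \le \bigl(\|\rho\|_{L^2(\Om)}+\|\Lap\rho\|_{L^2(\Om)}\bigr)\|\phi_g\|_{H^2(\Om)} \le C\|\rho\|_E\|g\|_{H^{1/2}(\partial\Om)}$, and the supremum over $g$ gives $\|\rho|_{\partial\Om}\|_{H^{-1/2}(\partial\Om)} \le C\|\rho\|_E$. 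Symmetrically, for $k$ in a dense subspace of $H^{3/2}(\partial\Om)$ choose $\psi_k \in H^2(\Om)$ with $\psi_k|_{\partial\Om}=k$, $\frac{\partial\psi_k}{\partial\vec{\nu}}|_{\partial\Om}=0$, $\|\psi_k\|_{H^2(\Om)} \le C\|k\|_{H^{3/2}(\partial\Om)}$; Green's identity gives $\langle \frac{\partial\rho}{\partial\vec{\nu}}|_{\partial\Om}, k\rangle = \int_\Om(\Lap\rho)\psi_k - \int_\Om\rho\,\Lap\psi_k$, whence $\|\frac{\partial\rho}{\partial\vec{\nu}}|_{\partial\Om}\|_{H^{-3/2}(\partial\Om)} \le C\|\rho\|_E$. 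Adding the two bounds is the claim.

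The main obstacle is the construction of the bounded trace liftings $g\mapsto \phi_g$ and $k\mapsto \psi_k$ on the rectangular domain $\Om$, which is only Lipschitz and has corners: the unit normal $\vec{\nu}$ jumps at the four vertices, and the image of $H^2(\Om)$ under the Cauchy trace map is not the full product $H^{3/2}(\partial\Om)\times H^{1/2}(\partial\Om)$ but the closed subspace cut out by corner-compatibility conditions (this is also why one only needs to test against a dense subspace above). I would dispatch this in one of two ways: either via a partition of unity localizing the lifting to the open edges — where a smooth change of variables flattens the boundary and the classical half-space extension applies — together with an explicit corner-adapted construction supported near each vertex; or, more in keeping with the rest of this paper, by routing the construction through the mixed (or Dirichlet) Laplacian on the rectangle, whose $H^2$-isomorphism property is supplied by \cite{Grisvard2} (cf. the operator $A_M$ used earlier), and using its inverse together with the harmonic extension maps $D_s, N_s$ already introduced in order to manufacture the required $H^2$ test functions. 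Once the liftings are in hand, the remainder is just Green's formula and Cauchy--Schwarz.
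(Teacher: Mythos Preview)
The paper does not supply its own proof of this lemma; it simply records the statement and defers to \cite{Avalos&Dvorak} for the argument. Your proposal --- define the two traces by duality through Green's second identity, pair against $H^2$ test functions with prescribed Cauchy data, and control those test functions via bounded right inverses of the trace map --- is the standard transposition argument for this kind of result and is, as you note, the route in the cited reference. You have correctly isolated the only genuinely nontrivial ingredient, namely the construction of the liftings $g\mapsto\phi_g$ and $k\mapsto\psi_k$ on the rectangular (merely Lipschitz) domain, where the full Cauchy trace $H^2(\Om)\to H^{3/2}(\partial\Om)\times H^{1/2}(\partial\Om)$ is not surjective; either of your proposed fixes (localization to edges plus a corner construction, or routing through the $H^2$-isomorphism of $A_M$ on the rectangle supplied by \cite{Grisvard2}) is adequate, and the second is more in the spirit of what the paper already does for $D_s$. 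One minor remark: once the traces are \emph{defined} directly on $E(\Om)$ by the Green formula, the density step in your opening paragraph is not needed for the estimate itself --- it only serves to check that your definition agrees with the classical trace on smooth functions.
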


\begin{lemma}\cite{Avalos&Dvorak} \label{lemma:lemma2}
Suppose a pair $(\mu, \rho) \in \mathbf{L}^2(\Om) \times L^2(\Om)$ has the following properties:
\begin{enumerate}\item $\mu \in \mathbf{H}^1(\Om)$;
\item $\div\, (\mu) = 0$;
\item $-\Lap \mu + \grad \rho = \mathbf{h}$, where $\mathbf{h} \in \mathbf{L}^2(\Om)$ and $\div\, (\mathbf{h}) = 0$.
\end{enumerate}
Then one has the additional boundary regularity for the pair $(\mu, \rho)$:

\begin{minipage}{.45\textwidth}
\begin{align}
\rho|_{\partial \Om} &\in H^{-1/2}(\partial \Om), \nonumber\\
\left. \frac{\partial \mu}{\partial \vec{\nu}} \right|_{\partial \Om} & \in \mathbf{H}^{-1/2}(\partial \Om), \nonumber
 \end{align}
\end{minipage}
\begin{minipage}{.45\textwidth}
\begin{align}
 \frac{\partial \rho}{\partial \vec{\nu}} &\in H^{-3/2}(\partial \Om), \nonumber \\
 [(\Lap \mu)\cdot \vec{\nu}]_{\partial \Om}& \in \mathbf{H}^{-3/2}(\partial \Om).\nonumber
\end{align}

\end{minipage}
\end{lemma}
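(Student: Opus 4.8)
The plan is to derive each of the four boundary trace statements in turn, deducing the regularity of $\rho$ first and then transferring that information to $\mu$ via the Stokes equation in hypothesis (3). For $\rho$: taking the divergence of the identity $-\Lap\mu+\grad\rho=\mathbf{h}$ and using $\div(\mu)=0$ and $\div(\mathbf{h})=0$ shows that $\Lap\rho=0$ in $\Om$; in particular $\rho\in L^2(\Om)$ with $\Lap\rho\in L^2(\Om)$, so Lemma \ref{lemma:lemma1} applies directly and yields $\rho|_{\partial\Om}\in H^{-1/2}(\partial\Om)$ and $\frac{\partial\rho}{\partial\vec\nu}\in H^{-3/2}(\partial\Om)$, with the quantitative bound in terms of $\|\rho\|_{L^2(\Om)}$. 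This disposes of the two statements on the left of the first minipage (top) and the first statement in the second minipage.

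For the normal derivative $\frac{\partial\mu}{\partial\vec\nu}\big|_{\partial\Om}$: since $\mu\in\mathbf{H}^1(\Om)$ by hypothesis (1), its trace is in $\mathbf{H}^{1/2}(\partial\Om)$, but the normal derivative is not immediately well defined at the $H^1$ level. I would obtain it instead by a duality/transposition argument built on the Stokes system, i.e. pair $-\Lap\mu+\grad\rho=\mathbf{h}$ against suitable divergence-free test fields and integrate by parts, using Green's formula for the Stokes operator $\la -\Lap\mu+\grad\rho,\mathbf{v}\ra = (\e(\mu),\e(\mathbf{v})) - \la \frac{\partial\mu}{\partial\vec\nu}-\rho\vec\nu,\mathbf{v}\ra_{\partial\Om}$ (or the analogous identity with the full stress). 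Since $\mathbf{h}\in\mathbf{L}^2(\Om)$, $\mu\in\mathbf{H}^1(\Om)$, and $\rho|_{\partial\Om}\in H^{-1/2}(\partial\Om)$ is already known, the boundary functional $\mathbf{v}\mapsto\la\frac{\partial\mu}{\partial\vec\nu},\mathbf{v}\ra_{\partial\Om}$ extends continuously to $\mathbf{v}\in\mathbf{H}^{1/2}(\partial\Om)$, giving $\frac{\partial\mu}{\partial\vec\nu}\big|_{\partial\Om}\in\mathbf{H}^{-1/2}(\partial\Om)$. Finally, for $[(\Lap\mu)\cdot\vec\nu]_{\partial\Om}$: from (3) we have $\Lap\mu=\grad\rho-\mathbf{h}$, so componentwise $\Lap\mu$ is a sum of an $\mathbf{L}^2$ term and $\grad\rho$ where $\rho\in L^2(\Om)$ with $\Lap\rho=0$; hence $\Lap\mu\cdot\vec\nu = \frac{\partial\rho}{\partial\vec\nu}-\mathbf{h}\cdot\vec\nu$ on $\partial\Om$, and the first term lies in $H^{-3/2}(\partial\Om)$ by the step above while $\mathbf{h}\cdot\vec\nu$ is controlled by another application of Lemma \ref{lemma:lemma1}-type reasoning (each component $h_i\in L^2(\Om)$ with $\Lap h_i$ not necessarily $L^2$, so here I would instead note $\mathbf{h}\in\mathbf{L}^2(\Om)$ divergence-free implies $\mathbf{h}\cdot\vec\nu\big|_{\partial\Om}\in H^{-1/2}(\partial\Om)\subset H^{-3/2}(\partial\Om)$ by the standard normal-trace theorem for $\mathbf{H}(\div)$). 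Combining gives $[(\Lap\mu)\cdot\vec\nu]_{\partial\Om}\in\mathbf{H}^{-3/2}(\partial\Om)$.

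The main obstacle I anticipate is making the transposition argument for $\frac{\partial\mu}{\partial\vec\nu}$ fully rigorous: one must exhibit, for each prescribed boundary datum $\mathbf{v}\in\mathbf{H}^{1/2}(\partial\Om)$, a divergence-free extension into $\Om$ with the correct regularity (so that all the interior pairings make sense), and one must check that the resulting functional is independent of the chosen extension — this is where the convexity/Lipschitz geometry of $\Om$ and the solvability of an auxiliary Stokes problem enter. The other steps are comparatively routine once Lemma \ref{lemma:lemma1} is invoked. Since the statement is quoted from \cite{Avalos&Dvorak}, I would ultimately cite that reference for the details, presenting the above only as the structure of the argument.
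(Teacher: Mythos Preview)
The paper does not actually prove this lemma: immediately before stating it, the text says ``The proofs of Lemma \ref{lemma:lemma1} and Lemma \ref{lemma:lemma2} can be found in \cite{Avalos&Dvorak},'' and no argument is given in the paper itself. So there is nothing to compare your proposal against, and your concluding remark---that you would ultimately cite \cite{Avalos&Dvorak} for the details---is exactly what the paper does.

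For what it is worth, the structure you sketch is the standard one and is correct: take the divergence of hypothesis (3) to obtain $\Lap\rho=0$, apply Lemma \ref{lemma:lemma1} to get the two traces of $\rho$; then use the identity $\Lap\mu=\grad\rho-\mathbf{h}$ together with $\partial\rho/\partial\vec\nu\in H^{-3/2}(\partial\Om)$ and the $H(\div)$ normal-trace theorem for $\mathbf{h}$ to get $(\Lap\mu)\cdot\vec\nu\in H^{-3/2}(\partial\Om)$; and recover $\partial\mu/\partial\vec\nu$ by a Green's-formula/transposition argument against $\mathbf{H}^1$ extensions of boundary data. One small slip: the Green identity you wrote with $(\epsilon(\mu),\epsilon(\mathbf{v}))$ produces the conormal trace associated with $\div\,\sigma(\cdot)$, not with $\Lap$; for the Stokes Laplacian you want $(\grad\mu,\grad\mathbf{v})_{\Om}$ on the right-hand side so that the boundary term is genuinely $\langle\partial\mu/\partial\vec\nu-\rho\vec\nu,\mathbf{v}\rangle_{\partial\Om}$. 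This is purely notational and does not affect the logic.
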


Now with the notation that $ \mathbf{H}^1_\Gamma(\Omega) = \{\mathbf{f} \in  \mathbf{H}^1(\Omega): \mathbf{f} = 0 \text{ on } \Gamma\}$ to specify where the value on the boundary is $0$, we require 
$\mathcal{D}(\A) \subset \mathcal{H}$ such that:

\begin{enumerate}
\item[(A.1)] $ [\bup, p^+, \bum]  \in \mathbf{H}^1_{\partial \Omega^+ \setminus \Gamma} (\Omega^+) \times L^2(\Omega^+) \times [\mathbf{H}^1_{\partial \Omega^- \setminus \Gamma}(\Omega^-)].$
\item[(A.2)] $\bU \cdot \grad p^+ \in {L}^2(\Omega^+)$.
\item[(A.3)] $- \div \, \sigma(\bup) + \grad p^+ \in \mathbf{L}^2(\Omega^+)$ (and so $\sigma(\bup) \cdot \vec{\nu} - p^- \vec{\nu} \in \mathbf{H}^{-1/2}(\partial \Omega^+)$).
\item[(A.4)] $\bup = \bum $ on $\Gamma$.
\item[(A.5)] For data $[\mathbf{u}_0^+, \mathbf{u}_0^-]$, there exists a corresponding ``pressure" $\pi_0^- \in L^2(\Omega^-)$ such that 

\begin{enumerate}
\item $[\mathbf{u}_0^-, \pi_0^-]$ satisfies $$-\Lap \mathbf{u}_0^- + \grad \pi_0^- \in  \{ \mathbf{f}  \in \mathbf{L}^2(\Om): \div(\mathbf{f}) = 0, \, \, [\mathbf{f} \cdot \vec{\nu}]_{\partial \Om \setminus \Gamma} = 0\}.$$

So by Lemma \ref{lemma:lemma2}, we have continuously\\
\begin{minipage}{.4\textwidth}
\begin{align}
\pi_0^-|_{\partial \Om} &\in H^{-1/2}(\partial \Om), \nonumber\\
\left. \frac{\partial \mathbf{u}_0^-}{\partial \vec{\nu}} \right|_{\partial \Om} & \in \mathbf{H}^{-1/2}(\partial \Om), \nonumber
 \end{align}
\end{minipage}
\begin{minipage}{.4\textwidth}
\begin{align}
 \frac{\partial \pi_0^-}{\partial \vec{\nu}} &\in H^{-3/2}(\partial \Om), \nonumber \\
 [(\Lap \mathbf{u}_0^-)\cdot \vec{\nu}]_{\partial \Om}& \in \mathbf{H}^{-3/2}(\partial \Om).\nonumber
\end{align}

\end{minipage}

\item $[\mathbf{u}_0^+, p_0^+, \mathbf{u}_0^-] $ and $\pi_0^-$ obey the following on $\Gamma$:
$$
\sigma(\mathbf{u}_0^+) \vec{\nu} - p_0^+ \vec{\nu} = \frac{\partial \mathbf{u}_0^-}{\partial \vec{\nu}} - \pi_0^- \vec{\nu} \, \, \text{ on } \Gamma.
$$
Thus, by the BVP (\ref{bvp_for_p}), we can identify this pressure $\pi_0^-$ associated with $[\mathbf{u}^+_0, p^+_0, \mathbf{u}^-_0]$ through the relation $$\pi_0^- = D_s \left(\frac{\partial \mathbf{u}^-_0}{\partial \vec{\nu}} \cdot \vec{\nu} - [\sigma(\mathbf{u}^+_0) \vec{\nu}] \cdot \vec{\nu} + p^+_0  \right)  + N_s(\Lap \mathbf{u}^-_0 \cdot \vec{\nu}),$$
from whence we obtain

$  \grad \pi_0^- = -G_1\mathbf{u}_0^- - G_2\mathbf{u}_0^+ - G_3p^+_0$.
\end{enumerate}

\end{enumerate}

With $\mathcal{D}(\mathcal{A})$ defined as such, we now present the main result. That is, we demonstrate semigroup wellposedness for\\ $\mathcal{A}: \mathcal{D}(\mathcal{A}) \subset \mathcal{H} \to \mathcal{H}$, the proof of which is based on the classic Lumer-Phillips Theorem. 

\begin{thm}
\begin{enumerate}
\item[(i)] The operator $\mathcal{A}: \mathcal{D}(\mathcal{A}) \subset  \mathcal{H} \to   \mathcal{H}$ is maximal dissipative. Therefore, by the Lumer-Phillips Theorem, it generates a $C_0$-semigroup of contractions $\{e^{\mathcal{A}t}\}_{t \geq 0}$ on $ \mathcal{H}$. 
\item[(ii)] Let $\l > 0$ and $[\mathbf{f}, g,  \mathbf{h}] \in  \mathcal{H}$ be given. (By part (i), there exists $[\bup, p^+, \bum] \in \mathcal{D}(\mathcal{A})$ which solves (\ref{resolventequn}).) Then $\bum$ and $p^-$   can be characterized as the solution to the variational system (\ref{compact_var_form}). The remaining unknown terms can then be characterized by 
\begin{align}
\bup &  = \mu_\lambda(\bum) + \tilde \mu([\mathbf{f}, g]^T) \nonumber \\
p^+ & = q_\lambda(\bum) + \tilde q([\mathbf{f}, g]^T) \nonumber,
\end{align}
where $[\mu_\l, q_\l]$ are defined in  (\ref{Dlambda}) and $[\tilde \mu, \tilde q]$ are defined in (\ref{psystem}).
\end{enumerate}
\end{thm}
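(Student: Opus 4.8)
The plan is to verify the hypotheses of the Lumer-Phillips Theorem: that $\mathcal{A}$ is dissipative on $\mathcal{H}$ and that $\lambda I - \mathcal{A}$ is surjective onto $\mathcal{H}$ for some (equivalently all) $\lambda > 0$; density of $\mathcal{D}(\mathcal{A})$ in $\mathcal{H}$ should follow from the construction, since the domain contains smooth compactly supported data in each component modulo the interface matching (A.4) and the compatibility (A.5).

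For dissipativity, I would take $[\bup, p^+, \bum] \in \mathcal{D}(\mathcal{A})$ and compute $\mathrm{Re}(\mathcal{A}[\bup,p^+,\bum], [\bup,p^+,\bum])_{\mathcal{H}}$. Expanding via the inner product, the $\Op$-contributions from $\div\,\sigma(\bup)$ and $-\grad p^+$, integrated by parts, produce a coercive Korn-type term $-2\nu\|\epsilon(\bup)\|^2 - (\nu+\tilde\lambda)\|\div\bup\|^2$ plus a boundary term on $\Gamma$ of the form $([\sigma(\bup)\vec\nu - p^+\vec\nu]\cdot\bup)_{\Gamma}$; the convection terms $-\bU\cdot\grad\bup$ and $-\bU\cdot\grad p^+$, using $\div\bU = 0$ (which must be extracted from $\bU\cdot\vec\nu = 0$ on $\partial\Op$ together with the stated regularity — actually $\div\bU=0$ is not literally assumed, so I would need to check whether $\mathbf{U}$ being divergence-free is implicit or whether the convective terms contribute a controllable lower-order term) and $\bU\cdot\vec\nu = 0$ on $\partial\Op$, integrate to zero or to a sign-definite boundary piece on $\Gamma$. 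The $\Om$-contribution from $\Lap\bum$ gives $-\|\grad\bum\|^2$ plus a boundary term $(\frac{\partial\bum}{\partial\vec\nu}\cdot\bum)_{\partial\Om}$, which by the Dirichlet condition on $\partial\Om\setminus\Gamma$ reduces to a $\Gamma$-integral; and the $G_1,G_2,G_3$ terms, by the very construction in (A.5)(b) giving $\grad\pi^- = -G_1\bum - G_2\bup - G_3 p^+$, reassemble into $(-\grad p^-\,,\,\bum)_{\Om} = (p^-, \div\bum)_{\Om} - (p^-\vec\nu\cdot\bum)_{\partial\Om} = -(p^-\vec\nu\cdot\bum)_{\Gamma}$ using $\div\bum = 0$. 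The interface condition (A.4) $\bup = \bum$ on $\Gamma$ and the transmission condition $\sigma(\bup)\vec\nu - p^+\vec\nu = \frac{\partial\bum}{\partial\vec\nu} - p^-\vec\nu$ (which holds for elements of $\mathcal{D}(\mathcal{A})$ by (A.5)(b) and the BVP) then cause all the $\Gamma$-boundary terms to cancel in pairs, leaving $\mathrm{Re}(\mathcal{A}\mathbf{y},\mathbf{y})_{\mathcal{H}} \le 0$. The low-regularity traces appearing here must be interpreted in the duality pairings furnished by Lemmas \ref{lemma:lemma1}–\ref{lemma:lemma2} and (A.3), so each integration by parts should be justified as a duality pairing rather than a Lebesgue integral.

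For maximality, given $\lambda > 0$ and $[\mathbf{f}, g, \mathbf{h}] \in \mathcal{H}$, I must produce $[\bup, p^+, \bum] \in \mathcal{D}(\mathcal{A})$ solving $(\lambda I - \mathcal{A})[\bup,p^+,\bum] = [\mathbf{f},g,\mathbf{h}]$, i.e. the resolvent system (\ref{resolventequn}). The strategy indicated by part (ii) is: first use the first two scalar equations (the $\Op$-block, which is a lower-order perturbation of an elliptic system in $\bup$ with $p^+$ slaved) to solve for $[\bup, p^+]$ in terms of the interface trace of $\bum$ — this is the map $[\mu_\lambda, q_\lambda]$ applied to $\bum$ plus a particular part $[\tilde\mu,\tilde q]$ driven by $[\mathbf f,g]$, defined in (\ref{Dlambda}) and (\ref{psystem}); then substitute into the third equation to obtain a problem for $[\bum, p^-]$ alone, which after testing against divergence-free fields becomes the mixed variational system (\ref{compact_var_form}). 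Solvability of that variational system is where the \textbf{main obstacle} lies: it is not a standard coercive-plus-compact Stokes problem because of the nonlocal operators $G_1,G_2,G_3$ and the coupling through $\mu_\lambda$, so I expect the authors to invoke the Babuska–Brezzi (inf-sup) framework — verifying a coercivity/G\aa rding estimate on the kernel of the divergence constraint and an inf-sup condition for the pressure-divergence bilinear form — which the abstract calls a "nonstandard usage." Once the variational solution $(\bum, p^-)$ is obtained, I would recover $\bup, p^+$ by the formulas in (ii), check that $p^-$ so produced coincides with the harmonic-extension formula (\ref{pm_cond}) (hence with the $\pi_0^-$ of (A.5)), and verify membership in $\mathcal{D}(\mathcal{A})$ by running the regularity of Lemma \ref{lemma:lemma2}, the Grisvard $H^2$ fact, and property (A.3) backwards. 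A uniform resolvent bound $\|(\lambda I - \mathcal{A})^{-1}\| \le 1/\lambda$ then follows from dissipativity, completing (i); part (ii) is essentially the bookkeeping of this construction.
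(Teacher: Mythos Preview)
Your overall architecture matches the paper's exactly: dissipativity plus the range condition via a Babuska--Brezzi argument on the mixed variational system (\ref{compact_var_form}), with $[\bup,p^+]$ recovered from $\bum$ through the maps $D_\lambda$ and $\mathbb{A}_\lambda^{-1}$. The maximality outline you give is essentially what the paper does, including the verification of coercivity of $a_\lambda$ on $\mathbf{H}^1_{\partial\Om\setminus\Gamma}(\Om)$ and the inf--sup condition for $b(\varphi,q)=-(q,\div\varphi)_{\Om}$.

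There is, however, a real gap in your dissipativity step, and you have already put your finger on it: you cannot extract $\div\bU=0$ from the hypotheses. Only $\bU\in\mathbf{H}^2(\Op)$ with $\bU\cdot\vec\nu=0$ on $\partial\Op$ is assumed, and in general $\div\bU\neq 0$. Consequently the convective contributions $-(\bU\cdot\grad\bup,\bup)_{\Op}$ and $-(\bU\cdot\grad p^+,p^+)_{\Op}$ do \emph{not} vanish, and $\mathcal{A}$ as written is not dissipative. The paper's remedy is to pass to the bounded perturbation
\[
\hat{\mathcal{A}}=\mathcal{A}-\tfrac{1}{2}\div(\bU)\begin{bmatrix}I&0&0\\0&I&0\\0&0&0\end{bmatrix},\qquad \mathcal{D}(\hat{\mathcal{A}})=\mathcal{D}(\mathcal{A}),
\]
and prove that $\hat{\mathcal{A}}$ is maximal dissipative; semigroup generation for $\mathcal{A}$ then follows from the standard bounded-perturbation theorem. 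The computation that makes this work is the identity (obtained by one integration by parts and $\bU\cdot\vec\nu|_{\partial\Op}=0$)
\[
2\,\mathrm{Re}\,(\bU\cdot\grad q,\,q)_{\Op}=-(\div\bU,\,|q|^2)_{\Op},
\]
so that $(\bU\cdot\grad q,q)_{\Op}+\tfrac12(\div(\bU)\,q,q)_{\Op}$ has zero real part; the analogous vector identity handles $\bup$. With these extra $\tfrac12\div(\bU)$ terms built in, the boundary cancellations you describe on $\Gamma$ go through exactly as you wrote, and one obtains $\mathrm{Re}(\hat{\mathcal{A}}\mathbf{y},\mathbf{y})_{\mathcal{H}}=-(\sigma(\bup),\epsilon(\bup))_{\Op}-\|\grad\bum\|_{\Om}^2\le 0$. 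Note also that this same $\tfrac12\div(\bU)$ shift appears in the $\Op$-resolvent equations (\ref{varequs})--(\ref{Ugradp+}) and hence in the definitions (\ref{Dlambda}) and (\ref{psystem}) of $D_\lambda$ and $\mathbb{A}_\lambda$; without it the Lax--Milgram ellipticity used in Lemmas \ref{Dlemma} and \ref{A_lemma} would not be clean.
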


\section{Proof of Theorem 1}

\subsection{Dissipativity}
In this section, we aim to show $\mathcal{R}e(\A \mathbf{v}, \mathbf{v})_H \leq 0$ for all $\mathbf{v} \in \mathcal{H}$. However, due to the non-zero ambient flow $\bU$, $\mathcal{A}$ as 
defined is not dissipative. Therefore, we introduce the bounded perturbation $\hat\A$ of the generator $\mathcal{A}$:
\beq
\hat\A = \A - \frac{\div(\bU)}{2} \bbm I & 0 & 0 \\
0 & I & 0 \\ 0 & 0 & 0 \ebm, \, \, \,  \mathcal{D}(\hat\A) = \mathcal{D}(\A).
\eeq
With this $\hat\A$, the standard perturbation result in \cite{Kato} can be applied, yielding semigroup generation for the original $\mathcal{A}$ of (\ref{genA}). So we aim to show that $\hat{\mathcal{A}}$ is maximal dissipative.

Considering the inner-product for the state-space  $\mathcal{H}$, for $\mathbf{v} = [\bup, p^+, \bum]^T \in \mathcal{D}(\hat\A)$, we have

\begin{align}
 \left( \hat \A \bbm \bup \\ p^+\\ \bum \ebm,  \bbm \bup \\ p^+\\ \bum \ebm \right)_\mathcal{H}  = & -(\bU \cdot \grad \bup, \bup)_{\Omega^+} + (\div \, \sigma(\bup), \bup)_{\Omega^+} - (\grad p^+, \bup)_{\Omega^+} \nonumber \\
 & - \frac12(\div(\bU) \bup,\bup)_{\Omega^+} -  (\div(\bup), p^+)_{\Omega^+} - (\bU\cdot \grad p^+, p^+)_{\Omega^+}\nonumber \\
 & - \frac12 (\div(\bU)p^+, p^+)_{\Omega^+}  + (\Lap \bum, \bum)_{\Omega^-} - (\grad p^-, \bum)_{\Omega^-}.
\end{align}

Applying Green's Theorem and the fact that $\bup\vert_{\partial \Omega^+ \setminus \Gamma} = 0$ to several terms, we have
\begin{itemize}
\item For the second term, $(\div \,  \sigma(\bup), \bup)_{\Omega^+} = - (\sigma(\bup), \e(\bup))_{\Omega^+} + \la \sigma(\bup) \vec{\nu}, \bup \ra_{\Gamma}.$
\item For the third term, we have $-(\grad p^+, \bup)_{\Omega^+} = (p^+, \div(\bup))_{\Omega^+} - \la p^+, \bup \cdot \vec{\nu} \ra_{\Gamma}$.
\item For the eighth term, $(\Lap \bum, \bum)_{\Omega^-} = - (\grad \bum, \grad \bum)_{\Omega^-} - \left\la \frac{\partial \bum}{\partial \vec{\nu}}, \bum \right\ra_{\Gamma}$, where we subtract, not add, the boundary term since $\vec{\nu}$ points \emph{into} $\Omega^-$.
\item For the ninth term, we have $-(\grad p^-, \bum)_{\Omega^-} = (p^-, \div(\bum))_{\Omega^-} + \la p^- \vec{\nu}, \bum \ra_\Gamma$, again adding the boundary term, not subtracting, because $\vec{\nu}$ points \emph{into} $\Om$. 
\end{itemize}

Substituting these in yields
\begin{align}
 \left( \hat \A \bbm \bup \\ p^+\\ \bum \ebm,  \bbm \bup \\ p^+\\ \bum \ebm \right)_\mathcal{H} & = -(\bU \cdot \grad \bup, \bup)_{\Omega^+} - (\sigma(\bup), \e(\bup))_{\Omega^+} + (p^+, \div(\bup))_{\Omega^+} \nonumber \\
& + \la \sigma(\bup)\vec{\nu} - p^+ \vec{\nu}, \bup \ra_{\Gamma} -  (\div(\bup), p^+)_{\Omega^+} - \frac12(\div(\bU) \bup,\bup)_{\Omega^+} \nonumber \\
 & - (\bU\cdot \grad p^+, p^+)_{\Omega^+}  - \frac12 (\div(\bU)p^+, p^+)_{\Omega^+} \nonumber \\
 &   - (\grad \bum, \grad \bum)_{\Omega^-} + (p^-, \div(\bum))_{\Omega^-} - \left\la\frac{\partial \bum}{\partial \vec{\nu}} -  p^- \vec{\nu}, \bum \right\ra_\Gamma.
\end{align}

Noting the boundary conditions in (\ref{bcs}), the boundary terms here cancel out. Furthermore, 
\begin{align*}
(p^+, \div(\bup))_{\Omega^+} -(\div(\bup), p^+)_{\Omega^+} & = (p^+, \div(\bup))_{\Omega^+} -\overline{(p^+, \div(\bup))_{\Omega^+}} \\
& = 2i\text{Im}(p^+, \div(\bup))_{\Omega^+}.
\end{align*}
Also, by virtue of the $\div(\bum) = 0$ condition, the $ (p^-, \div(\bum))_{\Omega^-}$ term drops out.

So we have 
\begin{align}  \left( \hat \A \bbm \bup \\ p^+\\ \bum \ebm,  \bbm \bup \\ p^+\\ \bum \ebm \right)_\mathcal{H} & = -(\bU \cdot \grad(\bup), \bup)_{\Omega^+} - (\sigma(\bup), \e(\bup))_{\Omega^+}  \nonumber \\
&+2i \text{Im} (p^+, \div(\bup))_{\Omega^+} - \frac12(\div(\bU) \bup,\bup)_{\Omega^+}  \nonumber \\
 &- (\bU\cdot \grad p^+, p^+)_{\Omega^+} - \frac12 (\div(\bU)p^+, p^+)_{\Omega^+}   - ||\grad \bum||^2_{\Omega^-}.\label{dissipativity}
\end{align}

For further simplification, now note that for functions $q, \tilde q \in L^2(\Omega^+)$ and with $\vec{\nu} = [\nu_1, \nu_2]^T$,
\begin{align*}
(\bU \cdot \grad q, \tilde q)_{\Op} & = \int_{\Omega^+} (\bU \cdot \grad q) \bar{\tilde q} \, d \Omega^+ \\
& = \int_{\Omega^+}  \left( \bbm U_1 \\ U_2 \ebm \cdot \bbm D_1q \\ D_2 q  \ebm  \right) \bar{\tilde q} \, d\Omega^+ \\
& =  \sum_{j=1}^2 \int_{\Omega^+} (U_j D_j q) \bar{\tilde q} \, d\Omega^+ \text{ (next applying Green's Theorem)} \\
& = \sum_{j=1}^2 \int_{\partial \Omega^+} U_j q \bnu_j \bar{\tilde q} \, d \partial \Omega^+  - \sum_{j=1}^2 \int_{\Omega^+} q D_j(U_j \bar{\tilde q} ) \, d \Omega^+\\
& = \int_{\partial \Omega^+} \cancelto{0 \text{ since } \bU \cdot \vec{\nu}|_{\partial \Omega^+} = 0}{(\bU \cdot \bnu)} q \bar{\tilde q} \, d \partial \Omega^+ \\
& - \left( \sum_{j=1}^2 \int_{\Omega^+} q \bar{\tilde q} Dj U_j \, d \Omega^+ + \sum_{j=1}^2 \int_{\Omega^+}q U_j D_j \bar{\tilde q} \, d \Omega^+ \right) \text{ (product rule)}\\
& = - \int_{\Omega^+} \div(\bU) (q \bar{\tilde q}) \, d\Omega^+ - \int_{\Omega^+} q \bU \cdot \grad \bar{\tilde q} \, d\Omega^+.
\end{align*}

Thus, if $q = \tilde q$, we have 
\begin{align}
(\bU \cdot \grad q, q)_{\Omega^+} & = - (\div(\bU), |q|^2)_{\Omega^+} - (q, \bU \cdot \grad q)_{\Omega^+}\nonumber \\
& = - (\div(\bU), |q|^2)_{\Omega^+} -\overline{(\bU \cdot \grad q, q)}_{\Omega^+}.
\end{align}

So adding $ \overline{(\bU \cdot \grad q, q)}_{\Omega^+}$ to both sides, we have 
\beq \label{2Re}
2 \mathcal{R}e(\bU \cdot \grad q, q)_{\Omega^+} = - (\div(\bU), |q|^2)_{\Omega^+}.
\eeq

Similarly, for $\bup \in \mathbf{H}^1(\Op)$, we have  
\beq\label{2Re_up}
2\mathcal{R}e(\bU \cdot \grad \bup, \bup)_{\Omega^+} = - (\div(\bU), |\bup|^2)_{\Omega^+}.
\eeq 

Furthermore, note that
\begin{align}\label{stress_strain_semi_pos_def}
(\sigma(\bup), \epsilon(\bup))_{\Op} & = (2 \nu \epsilon(\bup) + \tilde\lambda [I_2 \cdot \epsilon(\bup)]I_2, \epsilon(\bup))_{\Op}\nonumber\\
& = 2\nu ||\epsilon(\bup)||^2_{\Op} + \lambda( [I_2 \cdot \epsilon(\bup)]I_2, \epsilon(\bup))_{\Op}\nonumber\\
& = 2\nu ||\epsilon(\bup)||^2_{\Op} + \lambda(\div(\bup))^2 \nonumber\\
& \geq 0.
\end{align}

Thus, applying (\ref{2Re}), (\ref{2Re_up}), and (\ref{stress_strain_semi_pos_def}) to (\ref{dissipativity}), we have
\beq
\mathcal{R}e \left( \hat \A \bbm \bup \\ p^+\\ \bum \ebm,  \bbm \bup \\ p^+\\ \bum \ebm \right)_\mathcal{H}  =  - (\sigma(\bup),\e(\bup))_{\Omega^+} - ||\grad \bum||^2_{\Omega^-} \leq 0. 
\eeq

Thus, $\hat{\mathcal{A}}$ is dissipative.

\subsection{Maximality} \label{sec:Maximality}
In this section we show the maximality of the operator $\hat{\mathcal{A}}$ on the space $ \mathcal{H}$. To this end, we establish the \emph{range condition}, $Range(\lambda I - \hat\A) =  \mathcal{H}$ for parameter $\lambda$ sufficiently large. Our proof will make use of the classic Babuska-Brezzi Theorem, which we state here (see e.g., \cite{Kesavan}).

\begin{thm} (Babuska-Brezzi Theorem)  \label{thm:BBT}
Let $\Sigma$, $M$ be Hilbert spaces and $a: \Sigma \times \Sigma \to \R$ and $b: \Sigma \times M \to \R$ be bilinear forms which are continuous. Let
$$
Z = \{ \sigma \in \Sigma \, | \,  b(\sigma,q) = 0 \, \, \text{ for every } q \in M\}.
$$
Assume that $a(\cdot, \cdot)$ is $Z$-elliptic, i.e. there exists a constant $\alpha > 0$ such that 
$$
a(\sigma, \sigma) \geq \alpha ||\sigma||^2_\Sigma,
$$
for every $\sigma \in Z$. Assume further that there exists a constant $\beta > 0$ such that 
$$
\sup_{\tau \in \Sigma} \frac{b(\tau, q)}{||\tau||_\Sigma} \geq \beta ||q||_M \, \, \text{ for every } q \in M.
$$
Then if $\kappa \in \Sigma$ and $\ell \in M$, there exists a unique pair $(\sigma,p) \in \Sigma \times M$ such that 
\begin{align*}
a(\sigma, \tau) + b(\tau, p) & = (\kappa, \tau) \, \, \text{ for every } \tau \in \Sigma, \\
b(\sigma, q) & = (\ell, q) \, \, \text{ for every } q \in M.
\end{align*}

\end{thm}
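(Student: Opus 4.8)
The statement to be proved is the abstract Babuška–Brezzi Theorem, so the plan is purely functional-analytic: recast the forms $a$ and $b$ as bounded operators, solve for the primal unknown $\sigma$ by splitting it across the kernel $Z$ and its orthogonal complement, and then recover the multiplier $p$ from a closed-range argument powered by the inf-sup hypothesis. Existence and uniqueness are the only claims, so sharp constants are unnecessary.

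First I would introduce $B \in \mathcal{L}(\Sigma, M)$ defined via the Riesz map by $(B\tau, q)_M = b(\tau, q)$ for all $\tau \in \Sigma$, $q \in M$; boundedness of $B$ is exactly continuity of $b$, and $\ker B = Z$, so $Z$ is closed and $\Sigma = Z \oplus Z^{\perp}$ orthogonally (hence $Z$ is itself a Hilbert space). Let $B^{*} \in \mathcal{L}(M, \Sigma)$ be the Hilbert-space adjoint, so $(B^{*}q, \tau)_\Sigma = b(\tau, q)$. By Cauchy–Schwarz the inf-sup hypothesis is precisely $\|B^{*}q\|_\Sigma = \sup_{0 \neq \tau \in \Sigma} b(\tau,q)/\|\tau\|_\Sigma \geq \beta\|q\|_M$ for all $q \in M$, i.e. $B^{*}$ is bounded below; hence $B^{*}$ is injective with closed range, and the closed range theorem gives $\mathrm{Range}(B^{*}) = (\ker B)^{\perp} = Z^{\perp}$ and, since $\mathrm{Range}(B)$ is then also closed, $\mathrm{Range}(B) = (\ker B^{*})^{\perp} = M$. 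In particular $B$ is surjective, and $B|_{Z^{\perp}}\colon Z^{\perp} \to M$ is a continuous bijection, hence boundedly invertible by the open mapping theorem.

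Second, I would solve for $\sigma$. By surjectivity of $B$ there is a unique $\sigma_1 \in Z^{\perp}$ with $b(\sigma_1, q) = (\ell, q)_M$ for all $q \in M$. Writing $\sigma = \sigma_0 + \sigma_1$ with $\sigma_0 \in Z$ to be found, and testing the first equation against $\tau \in Z$ (which annihilates the $b(\tau, p)$ term by the very definition of $Z$), we are reduced to: find $\sigma_0 \in Z$ with $a(\sigma_0, \tau) = (\kappa, \tau)_\Sigma - a(\sigma_1, \tau)$ for all $\tau \in Z$. The right-hand side is a bounded linear functional on the Hilbert space $Z$, and $a$ is coercive on $Z$ by the $Z$-ellipticity hypothesis, so Lax–Milgram produces a unique such $\sigma_0$. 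Set $\sigma := \sigma_0 + \sigma_1$; by construction the functional $\tau \mapsto (\kappa, \tau)_\Sigma - a(\sigma, \tau)$ vanishes on $Z$, while $b(\sigma, q) = b(\sigma_1, q) = (\ell, q)_M$ for all $q$ since $\sigma_0 \in Z$.

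Third, I would recover $p$: the functional $F \in \Sigma'$ given by $F(\tau) = (\kappa, \tau)_\Sigma - a(\sigma, \tau)$ has Riesz representative in $Z^{\perp} = \mathrm{Range}(B^{*})$, so by injectivity of $B^{*}$ there is a unique $p \in M$ with $b(\tau, p) = F(\tau)$ for all $\tau \in \Sigma$, which is exactly the first equation of the system; the second equation has already been arranged. For uniqueness, take $\kappa = 0$, $\ell = 0$: the second equation forces $\sigma \in Z$, then testing the first against $\tau = \sigma$ gives $a(\sigma, \sigma) = 0$, so $\sigma = 0$ by ellipticity, and then $b(\tau, p) = 0$ for all $\tau$ gives $B^{*}p = 0$, so $p = 0$ by injectivity of $B^{*}$. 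The main obstacle is the second paragraph above: correctly extracting from the single inf-sup inequality the operator facts (closed range of $B^{*}$, surjectivity of $B$, bounded invertibility of $B|_{Z^{\perp}}$) via the closed range and open mapping theorems; once these are in hand, the remainder is Lax–Milgram together with orthogonal-decomposition bookkeeping.
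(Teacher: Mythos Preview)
Your proof is correct and follows the classical operator-theoretic route to the Babu\v{s}ka--Brezzi theorem: recast $b$ as an operator $B$, use the inf-sup bound to get injectivity and closed range of $B^{*}$, invoke the closed range theorem to obtain surjectivity of $B$, split $\sigma$ across $Z \oplus Z^{\perp}$, apply Lax--Milgram on $Z$, and recover $p$ from $\mathrm{Range}(B^{*}) = Z^{\perp}$. The argument is complete and the uniqueness paragraph is clean.

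There is, however, nothing to compare against: the paper does not prove Theorem~\ref{thm:BBT}. It is stated with a citation to Kesavan and used as a black box in the maximality argument of Section~\ref{sec:Maximality}. So your proposal is not an alternative to the paper's proof but rather a (correct, standard) proof of a result the paper simply imports from the literature.
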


Throughout, we will also make use of the Lax-Milgram Lemma, also stated here for convenience (see \cite{AB}). 

\begin{thm} (Lax-Milgram Lemma)
Let $V$ be a Hilbert space, $a: V \times V \to \R$ be a continuous bilinear form, and $b: V \to \R$ be a continuous linear form. Further assume that $a(\cdot, \cdot)$ is $V$-elliptic, i.e. there exists some constant $\alpha > 0$ such that  
$$
a(v,v) \geq \alpha ||v||_V^2 \, \, \text{ for all } v \in V. 
$$
Then there exists a unique $\hat u \in V$ such that $a(\hat u, v) = b(v) \text{ for all } v \in V$.
\end{thm}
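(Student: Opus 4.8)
The plan is to reduce the variational equation to a single operator equation in $V$ via the Riesz representation theorem, and then to prove that the associated operator is a bijection. First I would fix $u \in V$ and observe that $v \mapsto a(u,v)$ is a bounded linear functional on $V$ (boundedness from the continuity of $a$, say $|a(u,v)| \leq M||u||_V||v||_V$), so by Riesz representation there is a unique element $Au \in V$ with $a(u,v) = (Au,v)_V$ for every $v \in V$. A direct check shows $u \mapsto Au$ is linear and $||Au||_V \leq M||u||_V$, so $A \in \mathcal{L}(V)$. Similarly, $b(\cdot)$ being a bounded linear functional, Riesz gives a unique $f \in V$ with $b(v) = (f,v)_V$ for all $v \in V$. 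The conclusion of the lemma is then equivalent to the assertion that there is a unique $\hat u \in V$ with $A\hat u = f$.

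Next I would extract the two consequences of $V$-ellipticity. From $\alpha ||u||_V^2 \leq a(u,u) = (Au,u)_V \leq ||Au||_V \, ||u||_V$ one gets $||Au||_V \geq \alpha ||u||_V$ for every $u \in V$. This immediately gives injectivity of $A$, and also that $A$ has closed range: if $Au_n \to w$ in $V$, then $\{u_n\}$ is Cauchy since $||u_n - u_m||_V \leq \alpha^{-1}||Au_n - Au_m||_V$, hence $u_n \to u$ for some $u \in V$, and by continuity of $A$ we get $Au = w$, so $w \in \text{Range}(A)$.

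To finish, I would show $\text{Range}(A) = V$. Since $\text{Range}(A)$ is a closed subspace of the Hilbert space $V$, it suffices to show its orthogonal complement is trivial. If $w \in V$ satisfies $(Au,w)_V = 0$ for all $u \in V$, then in particular $0 = (Aw,w)_V = a(w,w) \geq \alpha ||w||_V^2$, forcing $w = 0$. Hence $\text{Range}(A)^\perp = \{0\}$ and $\text{Range}(A) = V$, so $A$ is a bijection; setting $\hat u = A^{-1}f$ gives existence, and the injectivity already established gives uniqueness. As an alternative to invoking the projection theorem, surjectivity of $A$ can be obtained by a Banach fixed-point argument: for $\rho > 0$ the map $Tv = v - \rho(Av - f)$ satisfies $||Tv_1 - Tv_2||_V^2 = ||v_1-v_2||_V^2 - 2\rho\,a(v_1-v_2, v_1-v_2) + \rho^2||A(v_1-v_2)||_V^2 \leq (1 - 2\rho\alpha + \rho^2 M^2)||v_1-v_2||_V^2$, and $1 - 2\rho\alpha + \rho^2 M^2 < 1$ for $0 < \rho < 2\alpha/M^2$, so $T$ is a contraction whose unique fixed point solves $A\hat u = f$.

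The only step carrying genuine content beyond bookkeeping is the surjectivity of $A$; the Riesz reduction and the injectivity/closed-range facts are routine once the coercivity inequality $||Au||_V \geq \alpha ||u||_V$ is in hand. The closed-range-plus-trivial-orthogonal-complement argument — equivalently, the contraction estimate — is precisely where $V$-ellipticity does its essential work, and is where I would expect the main (though modest) effort to lie.
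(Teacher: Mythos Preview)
Your proof is correct and is the standard Riesz-representation argument for Lax--Milgram. However, the paper does not actually prove this statement: it is stated as a quoted classical result with a citation to \cite{AB}, and no proof is given in the paper itself. So there is nothing to compare against --- you have supplied a complete (and sound) proof where the paper simply invokes the literature.
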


Establishing the range condition is equivalent to finding some $[\bup, p^+, \bum] \in \mathcal{D}(\A)$ which satisfies, for arbitrary $[\mathbf{f},g,\mathbf{h}] \in \mathcal{H}$, the abstract equation 
\beq \label{resolventequn}
(\lambda I - \hat\A) \bbm \bup\\ p^+ \\ \bum \ebm = \bbm\mathbf{ f} \\ g\\ \mathbf{h} \ebm. 
\eeq

Component-wise, this gives the following system of equations:
\begin{align} \label{varequs}
 \l \bup + \bU \cdot \grad \bup - \div \,  \sigma(\bup) + \frac12 \div(\bU) \bup + \grad p^+ = \mathbf{f} & \text{ in } \Omega^+,\\
 \l p^+ + \div(\bup) + \bU \cdot \grad p^+ + \frac12 \div(\bU) p^+ = g & \text{ in } \Omega^+, \label{Ugradp+}\\
 \l \bum - \Lap \bum + \grad p^- = \mathbf{h} & \text{ in } \Omega^-.  \label{um_equn}
\end{align}

Further, since $[\bup, p^+, \bum] \in \mathcal{D}(\A)$, we have the additional relations
\begin{align}\label{more_resolvent}
\bup = 0 & \text{ on } \partial \Op \setminus \Gamma,\\
\bum = 0 & \text{ on } \partial \Om \setminus \Gamma,\\
\bup = \bum & \text{ on } \Gamma,\\
\sigma(\bup) \vec{\nu} - p^+ \vec{\nu} = \frac{\partial\bum}{\partial \vec{\nu}} - \pi^- \vec{\nu} & \text{ on }\Gamma,\\
\div(\bum) = 0 & \text{ in } \Om.\label{end_resolvent}
\end{align}

We aim to establish suitable bilinear forms in $\bum$ and $p^-$, with the goal of applying the Babuska-Brezzi Theorem. To this end, consider (\ref{um_equn}). Multiplying through by  $\varphi \in \mathbf{H}^1_{\partial \Omega^- \setminus \Gamma}(\Omega^-)$ and integrating over $\Om$, along with Green's Theorem and the boundary condition, gives

\beq \label{Lap_um_plus_gradpm}
(\l \bum , \varphi)_{\Omega^-} - (\Lap \bum, \varphi)_{\Om} + (\grad p^-, \varphi)_\Om  = (\mathbf{h}, \varphi)_\Om, 
\eeq
where $(\cdot, \cdot)_\Omega$ is the usual $\mathbf{L}^2$-inner product on $\Omega$ (or $L^2$-inner product, as appropriate), with $\la \cdot, \cdot \ra_{\Gamma}$ denoting the boundary integral on $\Gamma$.

A few application of Green's Theorem and then boundary conditions gives
\begin{align}
\l(\bum, \varphi)_\Om + (\grad \bum, \grad \varphi)_\Om - (p^-, \div( \varphi))_\Om + \left \la \frac{\partial \bum}{\partial \vec{\nu}} - p^- \vec{\nu}, \varphi \right\ra_\Gamma & = (\mathbf{h}, \varphi)_\Om,
\end{align}
or equivalently,
\begin{align}
 \l(\bum, \varphi)_\Om + (\grad \bum, \grad \varphi)_\Om - (p^-, \div( \varphi))_\Om + \left \la \sigma(\bup) - p^+ \vec{\nu}, \varphi \right\ra_\Gamma & = (\mathbf{h}, \varphi)_\Om. \label{seven}
\end{align}

We will consider the $\left \la \sigma(\bup) \vec{\nu} - p^+ \vec{\nu}, \varphi \right\ra_\Gamma$ term next, but first need to define some maps and results regarding them.

\begin{lemma} \label{Dlemma}
The solution map $D_\l: \mathbf{H}^{1/2}(\Gamma) \to \mathbf{H}^1_{\partial \Op \setminus \Gamma} (\Omega^+) \times L^2 (\Op)$ given by $$D_\l(\varphi) = \bbm\mu_\l(\varphi)\\ q_\l(\varphi)\ebm,$$ where
\begin{align} \label{Dlambda}
\begin{cases}
\l \mu_\l  + \bU \cdot \grad \mu_\l - \div\, \sigma(\mu_\l) + \frac12 \div(\bU) \mu_\l + \grad q_\l = 0 & \text{ in } \Op, \\
\l q_\l + \div(\mu_\l) + \bU \cdot \grad q_\l + \frac12 \div(\bU) q_\l = 0 & \text{ in } \Op, \\
\mu_\l\vert_{\Gamma} = \varphi|_\Gamma  & \text{ on } \Gamma, \\
\mu_\l \vert_{\partial \Op \setminus \Gamma} = 0 & \text{ on } \partial \Op \setminus \Gamma
\end{cases}
\end{align}
is wellposed, giving the existence and uniqueness of solutions $[\mu_\l(\varphi), \,  q_\l(\varphi)]$ in $ \mathbf{H}^1_{\partial \Op \setminus \Gamma} (\Omega^+) \times L^2 (\Op)$ for $\varphi$ in $\mathbf{H}^{1/2}(\Gamma)$. 
\end{lemma}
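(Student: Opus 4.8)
The plan is to prove Lemma \ref{Dlemma} by reformulating the system (\ref{Dlambda}) as a variational problem in the pair $[\mu_\l, q_\l]$ and applying the Babuska-Brezzi Theorem (Theorem \ref{thm:BBT}) in the spirit of the rest of the paper, though the fact that the boundary condition $\mu_\l|_\Gamma = \varphi$ is inhomogeneous means I first need to homogenize. To this end I would fix a bounded lifting operator $E: \mathbf{H}^{1/2}(\Gamma) \to \mathbf{H}^1(\Op)$ with $E\varphi|_\Gamma = \varphi$ and $E\varphi|_{\partial\Op\setminus\Gamma} = 0$ (this exists since $\partial\Op$ is Lipschitz and $\Gamma$ is a full side of the rectangle, so one can use a standard trace-right-inverse composed with a cutoff), write $\mu_\l = w + E\varphi$ with $w \in \mathbf{H}^1_{\partial\Op\setminus\Gamma}(\Op)$, and absorb the $E\varphi$ terms into the right-hand side as a bounded linear functional. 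Then the unknowns become $[w, q_\l] \in \Sigma \times M$ with $\Sigma = \mathbf{H}^1_{\partial\Op\setminus\Gamma}(\Op)$ and $M = L^2(\Op)$.

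Next I would identify the bilinear forms. Testing the first equation of (\ref{Dlambda}) against $\tau \in \Sigma$ and integrating by parts (using $\tau|_{\partial\Op\setminus\Gamma}=0$ and, crucially, that the $\Gamma$-boundary term is handled because $w$ as well vanishes there — wait, no: $\tau$ does not vanish on $\Gamma$, so I must be careful; the natural boundary condition $\sigma(\mu_\l)\vec\nu - q_\l\vec\nu$ on $\Gamma$ is not prescribed in (\ref{Dlambda}), which suggests the test space for $\tau$ should in fact be $\mathbf{H}^1_0(\Op)$ — i.e. the Dirichlet data on all of $\partial\Op$ is prescribed for $\mu_\l$, so $w, \tau \in \mathbf{H}^1_0(\Op)$ and no boundary terms survive). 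With that correction, set
\[
a(w,\tau) = \l(w,\tau)_{\Op} + (\bU\cdot\grad w, \tau)_{\Op} + \tfrac12(\div(\bU)w,\tau)_{\Op} + (\sigma(w),\e(\tau))_{\Op},
\]
\[
b(\tau, q) = -(q, \div\tau)_{\Op},
\]
and for the second equation, testing against $r \in M = L^2(\Op)$ gives $\l(q_\l, r)_{\Op} + (\div w, r)_{\Op} + (\bU\cdot\grad q_\l, r)_{\Op} + \tfrac12(\div(\bU)q_\l, r)_{\Op} = (\text{data}, r)$; the $q_\l$-terms here are not of Babuska-Brezzi form, so the cleaner route is probably to treat this as a genuine $2\times2$ block system and either (a) use the generalized Babuska-Brezzi/Brezzi-Braess framework allowing a coercive-in-$q$ term in the second equation, or (b) eliminate $q_\l$ is not possible, so instead apply Lax-Milgram directly on the full product space $\Sigma \times M$ with the combined bilinear form $\mathcal{B}([w,q],[\tau,r]) = a(w,\tau) + b(\tau,q) - b(w,r) + \l(q,r)_{\Op} + (\bU\cdot\grad q + \tfrac12\div(\bU)q, r)_{\Op}$, whose antisymmetric pressure-coupling cancels in the diagonal. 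I would then check $\mathcal{B}$ is bounded on $[\mathbf{H}^1_0(\Op)\cap\ldots]\times L^2$ and coercive: $\mathcal{R}e\,\mathcal{B}([w,q],[w,q]) = \l\|w\|^2 + (\sigma(w),\e(w)) + \tfrac12 \cdot(\text{stuff from }\bU\text{ via (\ref{2Re}),(\ref{2Re_up})}) + \l\|q\|^2 \geq \l\|w\|_{\Op}^2 + c_K\|w\|_{\mathbf{H}^1(\Op)}^2 + \l\|q\|_{\Op}^2$, where the $\e(w)$-term is controlled below by Korn's inequality on $\mathbf{H}^1_0(\Op)$ and the $\bU$ first-order terms either cancel by the integration-by-parts identity (\ref{2Re}) or are absorbed for $\l$ large. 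This yields a unique $[w,q_\l]$, hence $[\mu_\l, q_\l] = [w + E\varphi, q_\l]$, with the norm bound linear in $\|\varphi\|_{\mathbf{H}^{1/2}(\Gamma)}$, which is the asserted wellposedness.

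The main obstacle I anticipate is the presence of the non-self-adjoint, non-coercive first-order convection terms $\bU\cdot\grad(\cdot)$ acting on \emph{both} $\mu_\l$ and $q_\l$: unlike in the dissipativity computation where the state equation supplies cancellation via $\bU\cdot\vec\nu = 0$ on \emph{all} of $\partial\Op$, here after the Dirichlet lifting the relevant identity (\ref{2Re}) still applies because $w \in \mathbf{H}^1_0(\Op)$ (so boundary terms vanish outright), giving $2\mathcal{R}e(\bU\cdot\grad w, w)_{\Op} = -(\div\bU, |w|^2)_{\Op}$ and likewise for $q$; combined with the $+\tfrac12\div(\bU)$ zeroth-order terms these produce exactly $+\l\|w\|^2 - \tfrac12(\div\bU,|w|^2) + \tfrac12(\div\bU,|w|^2) = \l\|w\|^2$, i.e. the troublesome terms cancel cleanly and no largeness of $\l$ is even needed beyond $\l>0$. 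The remaining care-point is verifying Korn's second inequality applies on $\mathbf{H}^1_0(\Op)$ to pass from $(\sigma(w),\e(w))_{\Op} \geq 2\nu\|\e(w)\|_{\Op}^2$ to a genuine $\mathbf{H}^1$-coercive bound, and confirming that the lifting data really does land the forcing functional in the dual of $\mathbf{H}^1_0(\Op)\times L^2(\Op)$ (it does, since $E\varphi \in \mathbf{H}^1(\Op)$ makes every term at most first-order in $E\varphi$ against an $\mathbf{H}^1$ or $L^2$ test function). Once coercivity and boundedness are in hand, Lax-Milgram (stated above) delivers existence, uniqueness, and continuous dependence in one stroke.
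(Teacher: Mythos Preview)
Your overall architecture---homogenize via a lifting $E\varphi$, then solve for $(w,q_\l)$ in $\mathbf{H}^1_0(\Op)\times L^2(\Op)$---is sound, and your recognition that the pure Babuska--Brezzi template does not fit (because of the $\l q_\l + \bU\cdot\grad q_\l + \tfrac12\div(\bU)q_\l$ block) is correct. But the Lax--Milgram route on the full product space has a genuine gap: the combined bilinear form
\[
\mathcal{B}([w,q],[\tau,r]) \;=\; a(w,\tau) + b(\tau,q) - b(w,r) + \l(q,r)_{\Op} + (\bU\cdot\grad q + \tfrac12\div(\bU)q,\, r)_{\Op}
\]
is \emph{not bounded} on $\bigl(\mathbf{H}^1_0(\Op)\times L^2(\Op)\bigr)^2$. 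The offending term is $(\bU\cdot\grad q, r)_{\Op}$: for $q\in L^2(\Op)$ one has only $\bU\cdot\grad q\in H^{-1}(\Op)$, so the pairing with $r\in L^2(\Op)$ is not continuous. Integrating by parts does not help, since it merely transfers the derivative to $r$, which is also only $L^2$. For the same reason the identity (\ref{2Re}) you invoke for the $q$-coercivity is only formal at the $L^2$ level---its derivation uses $\grad q$, which you do not have. So neither boundedness nor coercivity of $\mathcal{B}$ can be verified on the stated space, and Lax--Milgram does not apply.

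The paper handles exactly this obstruction by \emph{decoupling} the pressure equation first. It invokes an external transport-equation result (Theorem~9.1 of \cite{AGW}): for $\l$ large enough, the scalar problem
\[
\l\gamma + \bU\cdot\grad\gamma + \tfrac12\div(\bU)\gamma = \sigma \quad\text{in }\Op
\]
has a unique weak solution $\gamma\in L^2(\Op)$ with $\|\gamma\|_{\Op}\le \tfrac{1}{\l}\|\sigma\|_{\Op}$. This furnishes a bounded linear solution operator $L^2(\Op)\to L^2(\Op)$ for the first-order part, so that $q_\l = q_\l[\mu_\l]$ becomes an explicit bounded linear functional of $\div(\mu_\l)$, with norm $O(1/\l)$. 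The paper then applies Lax--Milgram \emph{only} in $\mathbf{H}^1_0(\Op)$ for the homogenized velocity unknown $\mu_0$, with the $q_\l[\mu_\l]$ contribution either placed on the right-hand side or absorbed into the bilinear form as a small perturbation (this is where ``$\l$ sufficiently large'' enters, contrary to your remark that any $\l>0$ suffices). Korn's inequality and the cancellation (\ref{2Re_up})---which \emph{is} legitimate here since $\mu_0\in\mathbf{H}^1_0(\Op)$---then give coercivity. In short, the missing ingredient in your proposal is the $L^2$-wellposedness of the stationary transport equation, which is a nontrivial PDE fact not recoverable from Lax--Milgram on the product space.
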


\begin{proof}

First, to resolve the ``pressure" term $q_\lambda$, we apply Theorem 9.1 in \cite{AGW}, which states, among other things, that for a given $\sigma \in L^2(\Op)$, there exists some $\gamma \in L^2(\Op)$ such that $\lambda \gamma + \bU \cdot \grad \gamma + \frac12\div(\bU)\gamma = \sigma $ is satisfied in the weak sense with $||\gamma||_\Op \leq \frac1\lambda ||\sigma||_\Op$ for a sufficiently large $\lambda$.

It is assumed $\mu_\lambda \in \mathbf{H}^1(\Op)$, so we have that $-\div(\mu_\lambda) \in \mathbf{L}^2(\Op)$. Thus, applying Theorem 9.1 to 
\beq  \label{qlambda}
\lambda q_\lambda + \bU \cdot \grad q_\lambda + \frac12\div(\bU) q_\lambda = -\div(\mu_\lambda) \, \, \text{ in } \Op
\eeq
 gives solution $q_\lambda[\mu_\lambda]$ with $$||q_\lambda||_\Op \leq \frac1\lambda ||\div(\mu_\lambda)||_\Op \leq \frac{C}\lambda ||\mu_\lambda||_{\mathbf{H}^1(\Op)}$$
for sufficiently large $\lambda = \lambda(\bU)$. 

Note that this only gives solution $q_\lambda[\mu_\lambda]$ in the weak sense; that is, for any $\varphi \in \mathbf{L}^2(\Op)$, we have 

\beq \label{q_lambda_var_form}
\lambda(q_\lambda, \varphi)_\Op +  (\bU \cdot \grad q_\lambda , \varphi)_\Op + \frac12(\div(\bU) q_\lambda, \varphi)_\Op = -(\div(\mu_\lambda), \varphi)_\Op.
\eeq 
But since this holds for all $\varphi \in \mathbf{L}^2(\Op)$, it holds in particular for all $\varphi \in \mathcal{D}(\Op)$. Thus, 
\beq\lambda q_\lambda + \bU \cdot \grad q_\lambda + \frac12\div(\bU) q_\lambda = -\div(\mu_\lambda)
\eeq
 a.e. in $\Op$. 

Next, $\varphi \in \mathbf{H}^1_{\partial \Om \setminus \Gamma}(\Om)$ and the Sobolev Trace map is surjective, so there exists some $\gamma_0^+ \in \mathcal{L}(\mathbf{H}^{1/2}(\Gamma), \mathbf{H}^1(\Om))$ such that $\gamma_0 \gamma_0^+(\mathbf{g}) = \mathbf{g}$ for all $\mathbf{g} \in \mathbf{H}^{1/2}(\Gamma)$. 
Furthermore, since $\Om$ and $\Op$ are symmetric across $\Gamma$, there exists some $\tilde \gamma_0^+ \in \mathcal{L}(\mathbf{H}^{1/2}(\Gamma), \mathbf{H}^1(\Op))$. (For example, if $\Gamma$ lies on the $x$-axis, let $\tilde \gamma_0^+ = - \gamma_0^+$.) 

Letting $\tilde \mu = \tilde \gamma_0^+(\varphi|_\Gamma) \in \mathbf{H}^1_{\partial \Op \setminus \Gamma} (\Op)$ with $\tilde \mu|_\Gamma = \varphi$, we now aim to find $\mu_0 \in \mathbf{H}^1_0(\Op)$ such that
\begin{align}
& \lambda(\mu_0, \psi)_\Op + (\bU \cdot \grad \mu_0, \psi)_\Op + (\sigma(\mu_0), \e(\psi))_\Op + \frac12(\div(\bU)\mu_0, \psi)_\Op \nonumber \\
& = (q_\lambda, \div(\psi))_\Op - (\lambda \tilde u + \bU \cdot \grad \tilde \mu+ \frac 12\div(\bU) \tilde \mu, \psi)_\Op - (\sigma (\tilde \mu), \e(\psi))_\Op 
\end{align}
for all $ \psi  \in \mathbf{H}^1_0(\Op)$.
With this $\mu_0$, we would then take $\mu_\lambda = \mu_0 + \tilde \mu$. 

To show the existence of such a $\mu_0$, we aim to apply the Lax-Milgram Lemma to $$a(\mathbf{u},\mathbf{v}) = F(\mathbf{v}),$$ where for all $\mathbf{u},\mathbf{v} \in \mathbf{H}^1_0(\Op)$, 
$a(\cdot, \cdot): \mathbf{H}^1_0(\Op) \times \mathbf{H}^1_0(\Op) \to \R$ is given by 
\beq
a(\mathbf{u},\mathbf{v}) = \lambda (\mathbf{u},\mathbf{v})_\Op + (\bU \cdot \grad \mathbf{u}, \mathbf{v})_\Op + (\sigma(\mathbf{u}), \e(\mathbf{v}))_\Op + \frac12(\div(\bU)\mathbf{u},\mathbf{v})_\Op
\eeq
and 
$F(\cdot): \mathbf{H}^1_0(\Op) \to \R$ is given by
\beq 
F(\mathbf{v}) = (q_\lambda, \div(\mathbf{v}))_\Op - (\lambda \tilde \mu + \bU \cdot \grad \tilde \mu + \frac 12 \div(\bU) \tilde \mu, \mathbf{v})_\Op  - (\sigma(\tilde\mu),\e(\mathbf{v}))_\Op.
\eeq

With a view of applying Lax-Milgram, note that by linearity of the integral and partial derivatives, $a(\cdot, \cdot)$ and $F(\cdot)$ are clearly linear.\\
Furthermore, $F(\cdot)$ is bounded. Indeed, for $\mathbf{v} \in \mathbf{H}^1_0(\Op)$, 
\begin{align}
|F(\mathbf{v})| & \leq|(q_\lambda, \div(\mathbf{v}))_\Op| + | (\lambda \tilde \mu + \bU \cdot \grad \tilde \mu + \frac12\div(\bU) \tilde \mu, \mathbf{v})_\Op| + |(\sigma(\tilde\mu), \e(v))_\Op| \nonumber \\
& \leq ||q_\lambda||_\Op ||\div(\mathbf{v})||_\Op + ||\lambda \tilde\mu + \bU \cdot \grad \tilde \mu + \frac12 \div(\bU)\tilde\mu||_\Op ||\mathbf{v}||_\Op \nonumber \\
& \hspace{.5in}+ ||\sigma(\tilde\mu)||_\Op|| \e(\mathbf{v})||_\Op \nonumber 
 \end{align}

Here, note that $||\div(\mathbf{v})||_\Op \leq ||\mathbf{v}||_{\mathbf{H}^1(\Op)}$ and $||\mathbf{v}||_\Op \leq ||\mathbf{v}||_{\mathbf{H}^1(\Op)}$. Additionally, $||\sigma(\tilde \mu)||_\Op \leq C||\tilde \mu||_{\mathbf{H}^1(\Op)}$ and $||\epsilon(\mathbf{v})||_\Op \leq \tilde C ||\mathbf{v}||_{\mathbf{H}^1(\Op)}$. 
Furthermore, $q_\l \in L^2(\Op)$ so $||q_\l||_\Op < \infty$. And finally, by Sobolev embedding, $\bU \in \mathbf{C}^1(\overline{\Op})$, so $||\bU||_\infty < \infty$.

Thus, we have 
\begin{align}
|F(\mathbf{v)}| & \leq C ( ||q_\lambda||_\Op + \lambda ||\tilde \mu||_\Op + ||\bU ||_\infty ||\tilde \mu||_{\mathbf{H}^1(\Op)} + ||\tilde \mu||_\Op + ||\tilde\mu||_{\mathbf{H}^1(\Op)} )||\mathbf{v}||_{\mathbf{H}^1(\Op)}  \nonumber  \\
& \leq C_{\tilde\mu, \bU, q_\lambda}||\mathbf{v}||_{\mathbf{H}^1(\Op)}.
\end{align}

 Also note that $a(\cdot, \cdot)$ is bounded. Indeed, for every $\mathbf{u},\mathbf{v} \in \mathbf{H}^1_0(\Op)$,
 \begin{align}
 a(\mathbf{u},\mathbf{v}) & = \lambda (\mathbf{u},\mathbf{v})_\Op + (\bU \cdot \grad \mathbf{u}, \mathbf{v})_\Op + (\sigma(\mathbf{u}), \e(\mathbf{v}))_\Op + \frac12(\div(\bU)\mathbf{u},\mathbf{v})_\Op \nonumber\\
 & \leq \lambda||\mathbf{u}||_\Op ||\mathbf{v}||_\Op + ||\bU||_\infty ||\grad \mathbf{u}||_\Op ||\mathbf{v}||_\Op + ||\sigma(\mathbf{u})||_\Op ||\e(\mathbf{v})||_\Op \nonumber\\
 & \hspace{.5in} + C ||\div(\bU) \mathbf{u}||_\Op||\mathbf{v}||_\Op \nonumber\\
 & \text{(by Cauchy-Schwarz  and using $\bU \in C^1(\overline{\Op})$ by Sobolev embedding)}\nonumber \\
 & \leq \tilde{C}( \lambda||\mathbf{u}||_{\mathbf{H}^1(\Op)} ||\mathbf{v}||_{\mathbf{H}^1(\Op)}  + ||\bU||_\infty || \mathbf{u}||_{\mathbf{H}^1(\Op)}  ||\mathbf{v}||_{\mathbf{H}^1(\Op)}    \nonumber \\
 & \hspace{.5in} + ||\mathbf{u}||_{\mathbf{H}^1(\Op)}  ||\mathbf{v}||_{\mathbf{H}^1(\Op)} + C_\bU ||\mathbf{u}||_{\mathbf{H}^1(\Op)} ||\mathbf{v}||_{\mathbf{H}^1(\Op)})  \\
 & \hspace{.5in} \text{(since $\mathbf{U} \in C^1(\overline{\Op})$, $\div(\mathbf{U})$ is bounded)}\nonumber\\ 
 & = M ||\mathbf{u}||_{\mathbf{H}^1(\Op)} ||\mathbf{v}||_{\mathbf{H}^1(\Op)} \text{ for some } M > 0.
 \end{align}

We also have that $a(\cdot, \cdot)$ is $\mathbf{H}^1_0(\Op)$-elliptic. Indeed, for any $\mathbf{u} \in \mathbf{H}_0^1(\Op)$, 
\begin{align}
a(\mathbf{u},\mathbf{u}) & = \lambda (\mathbf{u},\mathbf{u})_\Op + (\bU \cdot \grad \mathbf{u}, \mathbf{u})_\Op + (\sigma(\mathbf{u}), \e(\mathbf{u}))_\Op + \frac12(\div(\bU)\mathbf{u},\mathbf{u})_\Op \nonumber \\
& = \lambda ||\mathbf{u}||_{\Op}^2 + (\sigma(\mathbf{u}), \e(\mathbf{u}))_\Op + (\bU \cdot \grad \mathbf{u} + \frac12\div(\bU)\mathbf{u}, \mathbf{u})_\Op \nonumber \\
& =  \lambda ||\mathbf{u}||_{\Op}^2 + 2 \nu ||\epsilon(\mathbf{u})||_\Op^2 + \lambda (\div(\mathbf{u}))^2 + 0 \\
& \geq \tilde c ||\mathbf{u}||_{\mathbf{H}^1(\Op)}^2, \label{a_elliptic}
\end{align}
using Korn's Inequality and noting that by (\ref{2Re}),  $2(\bU \cdot \grad \mathbf{u},\mathbf{u})_{\Op}  = - (\div(\bU), |\mathbf{u}|^2)_{\Op}$, and so $(\bU \cdot \grad \mathbf{u} + \frac12 \div(\bU)\mathbf{u}, \mathbf{u})_\Op = 0$ for all $\mathbf{u} \in \mathbf{H}_0^1(\Op)$.

So by the Lax-Milgram Lemma, we have the existence and uniqueness of a solution $\mu_0 \in \mathbf{H}^1_0(\Op)$ to $a(\mu_0, \mathbf{v}) = F(\mathbf{v})$  for all $\mathbf{v} \in \mathbf{H}^1_0(\Op)$, with $||\mu_0||_{\mathbf{H}^1(\Op)} \leq C ||q_\lambda||_\Op$.

With this $\mu_0$ in hand, and with $\tilde \mu = \widetilde \gamma_0^+(\varphi|_\Gamma)$, we now show that $\mu_\lambda(\varphi|_\Gamma) = \mu_0 + \tilde \mu$ and $q_\lambda(\varphi|_\Gamma)$ satisfies (\ref{Dlambda}): 

From (\ref{qlambda}), the second equation in (\ref{Dlambda}) is clearly satisfied with $||q_\lambda||_\Op \leq \frac{c}{\lambda}||\mu_\lambda||_{\mathbf{H}^1(\Op)} $. 

As for satisfying the first equation in (\ref{Dlambda}), note that for any $\mathbf{v}\in {\mathbf{H}^1_0(\Op)} $, we have, via Green's Identities and boundary terms dropping out since $\mathbf{v} \in \mathbf{H}_0^1(\Op)$, 
\begin{align}
& a(\mu_\lambda,\mathbf{v}) = F(\mathbf{v}) \nonumber\\
\Rightarrow & (\lambda \mu_0 + \bU \cdot \grad \mu_0 + \frac12\div(\bU)\mu_0,\mathbf{v})_\Op +  (\sigma(\mu_0), \e(\mathbf{v}))_\Op = \nonumber\\
&  (q_\lambda, \div(\mathbf{v}))_\Op - (\lambda \tilde\mu + \bU \cdot \grad \tilde\mu + \frac12\div(\bU) \tilde\mu, \mathbf{v})_\Op - (\sigma(\tilde\mu), \e(\mathbf{v}))_\Op \nonumber \\
\Rightarrow & (\lambda (\mu_0 + \tilde\mu)+ U \cdot \grad( \mu_0 + \tilde \mu)+ \frac12\div(\bU) (\mu_0 + \tilde \mu), \mathbf{v})_\Op +  (\sigma(\mu_0 + \tilde\mu), \e(\mathbf{v}))_\Op = \nonumber\\
&  (q_\lambda, \div(\mathbf{v}))_\Op \nonumber.
\end{align}
Setting $\mu_0 + \tilde\mu = \mu_\lambda$, this gives 
\beq \label{mu_lambda_var_form}
(\lambda \mu_\lambda + \bU\cdot \grad \mu_\lambda  + \frac12 \div(\bU) \mu_\lambda, \mathbf{v})_\Op - (\div \, \sigma(\mu_\lambda), \mathbf{v})_\Op + (\grad q_\lambda, \mathbf{v}) = 0.
\eeq

Since this holds for all $\mathbf{v} \in \mathbf{H}^1_0(\Op)$, it holds in particular for $\mathbf{v} \in \mathcal{D}(\Op)$. So we infer that the corresponding solution $\mu_\lambda$ satisfies (\ref{Dlambda}) pointwise a.e. in $\Op$. 

Finally, note that $\mu_\lambda|_\Gamma = 0 + \varphi$ and $\mu_{\partial \Op \setminus \Gamma} = 0 + 0$, so the boundary conditions of (\ref{Dlambda}) are satisfied. 

Thus, the map $D_\lambda(\cdot)$, as defined in (\ref{Dlambda}) is wellposed, haivng unique solutions $[\mu_\lambda(\varphi), q_\lambda(\varphi)]^T$, with continuous dependence on data $\varphi\in \mathbf{H}^{1/2}(\Gamma)$, i.e. 
\beq 
||\mu_\l(\varphi)||_{\mathbf{H}^1(\Op)} + ||q_\l(\varphi)||_{\Op} \leq C ||\varphi||_{\mathbf{H}^{1/2}(\Gamma)}.
\eeq

\end{proof}

We provide a similar result for the map $\mathbb{A}_\l$. 

\begin{lemma} \label{A_lemma}
For a given $\l > 0$, the map $\mathbb{A}_\lambda: \mathbf{H}_0^1(\Op) \times L^2(\Op) \to \mathbf{L}^2(\Op) \times L^2(\Op)$, given by 
\beq \label{AGWStep1}
\mathbb{A}_\l(\tilde \mu, \tilde q)  = \bbm
\l \tilde\mu + \bU \cdot \grad \tilde\mu - \div \, \sigma(\tilde\mu) + \frac12\div(\bU) \tilde\mu + \grad \tilde q \\
\l \tilde q + \div(\tilde\mu) + \bU \cdot \grad \tilde q + \frac12 \div(\bU) \tilde q
\ebm
\eeq 
 admits of a bounded inverse on $\mathbf{L}^2(\Op) \times L^2(\Op)$.
\end{lemma}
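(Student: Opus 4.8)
The plan is to mirror, almost verbatim, the proof of Lemma~\ref{Dlemma}: the first two scalar equations defining $\mathbb{A}_\l$ are precisely the first two equations in (\ref{Dlambda}), now with zero Dirichlet data on $\Gamma$ but a prescribed forcing $(\mathbf{F},G)\in\mathbf{L}^2(\Op)\times L^2(\Op)$. So I will show that for $\l$ sufficiently large (which is all that is needed in the range condition of Section~\ref{sec:Maximality}) the equation $\mathbb{A}_\l(\tilde\mu,\tilde q)=(\mathbf{F},G)$ has a unique solution $(\tilde\mu,\tilde q)\in\mathbf{H}^1_0(\Op)\times L^2(\Op)$ with $\|\tilde\mu\|_{\mathbf{H}^1(\Op)}+\|\tilde q\|_{\Op}\le C\|(\mathbf{F},G)\|_{\mathbf{L}^2(\Op)\times L^2(\Op)}$, which gives exactly the asserted bounded inverse.

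First I would eliminate the pressure. By Theorem~9.1 in \cite{AGW}, for $\l$ large the transport operator $\gamma\mapsto\l\gamma+\bU\cdot\grad\gamma+\tfrac12\div(\bU)\gamma$ is boundedly invertible on $L^2(\Op)$, with inverse $\mathcal{T}_\l$ obeying $\|\mathcal{T}_\l\|_{\mathcal{L}(L^2(\Op))}\le 1/\l$. The second component of $\mathbb{A}_\l(\tilde\mu,\tilde q)=(\mathbf{F},G)$ then forces $\tilde q=\mathcal{T}_\l(G-\div\tilde\mu)=\mathcal{T}_\l G-\mathcal{T}_\l\div\tilde\mu$, well-defined since $-\div\tilde\mu\in L^2(\Op)$ for any $\tilde\mu\in\mathbf{H}^1(\Op)$. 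Substituting this into the weak form of the first component and using $(\grad\tilde q,\mathbf{v})_\Op=-(\tilde q,\div\mathbf{v})_\Op$ for $\mathbf{v}\in\mathbf{H}^1_0(\Op)$ reduces the problem to finding $\tilde\mu\in\mathbf{H}^1_0(\Op)$ with $a(\tilde\mu,\mathbf{v})=F(\mathbf{v})$ for all $\mathbf{v}\in\mathbf{H}^1_0(\Op)$, where $a(\mathbf{u},\mathbf{v})=\l(\mathbf{u},\mathbf{v})_\Op+(\bU\cdot\grad\mathbf{u},\mathbf{v})_\Op+(\sigma(\mathbf{u}),\e(\mathbf{v}))_\Op+\tfrac12(\div(\bU)\mathbf{u},\mathbf{v})_\Op+(\mathcal{T}_\l\div\mathbf{u},\div\mathbf{v})_\Op$ and $F(\mathbf{v})=(\mathbf{F},\mathbf{v})_\Op+(\mathcal{T}_\l G,\div\mathbf{v})_\Op$. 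The one structural difference from Lemma~\ref{Dlemma} is that the dependence of $\tilde q$ on $\div\tilde\mu$ is now folded into the extra term $(\mathcal{T}_\l\div\mathbf{u},\div\mathbf{v})_\Op$ of the bilinear form, rather than carried as given data.

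Next I would check the Lax--Milgram hypotheses on $\mathbf{H}^1_0(\Op)$. Continuity of $F$ is Cauchy--Schwarz together with $\|\div\mathbf{v}\|_\Op\le\|\mathbf{v}\|_{\mathbf{H}^1(\Op)}$; continuity of $a$ follows as in Lemma~\ref{Dlemma}, using $\bU\in\mathbf{C}^1(\overline{\Op})$ by Sobolev embedding and $\|\mathcal{T}_\l\div\mathbf{u}\|_\Op\le\tfrac1\l\|\mathbf{u}\|_{\mathbf{H}^1(\Op)}$ for the new term. For coercivity, put $\mathbf{v}=\mathbf{u}$: by (\ref{2Re}) the term $(\bU\cdot\grad\mathbf{u}+\tfrac12\div(\bU)\mathbf{u},\mathbf{u})_\Op$ vanishes, and $(\sigma(\mathbf{u}),\e(\mathbf{u}))_\Op\ge 2\nu\|\e(\mathbf{u})\|_\Op^2$ by (\ref{stress_strain_semi_pos_def}), so $a(\mathbf{u},\mathbf{u})\ge 2\nu\|\e(\mathbf{u})\|_\Op^2-\tfrac1\l\|\div\mathbf{u}\|_\Op^2\ge 2\nu\|\e(\mathbf{u})\|_\Op^2-\tfrac1\l\|\mathbf{u}\|_{\mathbf{H}^1(\Op)}^2$; Korn's first inequality on $\mathbf{H}^1_0(\Op)$ gives $\|\e(\mathbf{u})\|_\Op^2\ge c_K\|\mathbf{u}\|_{\mathbf{H}^1(\Op)}^2$, whence $a(\mathbf{u},\mathbf{u})\ge(2\nu c_K-\tfrac1\l)\|\mathbf{u}\|_{\mathbf{H}^1(\Op)}^2\ge\alpha\|\mathbf{u}\|_{\mathbf{H}^1(\Op)}^2$ once $\l$ is large enough — the same largeness already demanded by Theorem~9.1.

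Finally, Lax--Milgram yields a unique $\tilde\mu\in\mathbf{H}^1_0(\Op)$ with $\|\tilde\mu\|_{\mathbf{H}^1(\Op)}\le\alpha^{-1}\|F\|\le C(\|\mathbf{F}\|_\Op+\|G\|_\Op)$; setting $\tilde q=\mathcal{T}_\l G-\mathcal{T}_\l\div\tilde\mu$ gives $\|\tilde q\|_\Op\le\tfrac1\l(\|G\|_\Op+C\|\tilde\mu\|_{\mathbf{H}^1(\Op)})\le C'(\|\mathbf{F}\|_\Op+\|G\|_\Op)$. Taking $\mathbf{v}\in\mathcal{D}(\Op)$ in $a(\tilde\mu,\mathbf{v})=F(\mathbf{v})$, integrating by parts, and recalling the defining relation of $\tilde q$ shows $(\tilde\mu,\tilde q)$ satisfies $\mathbb{A}_\l(\tilde\mu,\tilde q)=(\mathbf{F},G)$ a.e.\ in $\Op$; injectivity follows from the same coercivity estimate applied to a homogeneous solution. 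Hence $\mathbb{A}_\l$ is a bounded bijection of $\mathbf{H}^1_0(\Op)\times L^2(\Op)$ onto $\mathbf{L}^2(\Op)\times L^2(\Op)$ with $\|\mathbb{A}_\l^{-1}(\mathbf{F},G)\|_{\mathbf{H}^1(\Op)\times L^2(\Op)}\le C\|(\mathbf{F},G)\|_{\mathbf{L}^2(\Op)\times L^2(\Op)}$, in particular a bounded inverse on $\mathbf{L}^2(\Op)\times L^2(\Op)$. The one genuine obstacle is that the coupling term $(\mathcal{T}_\l\div\mathbf{u},\div\mathbf{u})_\Op$ need not be sign-definite, so it could in principle spoil coercivity; this is precisely defeated by the $1/\l$ smallness from Theorem~9.1 combined with Korn's inequality.
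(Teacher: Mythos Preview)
Your proof is correct and follows essentially the same route as the paper: eliminate the pressure via Theorem~9.1 of \cite{AGW} (your $\mathcal{T}_\l$ is exactly the paper's map $\sigma\mapsto\tilde q[\sigma]$), substitute into the weak form of the first equation, and apply Lax--Milgram on $\mathbf{H}^1_0(\Op)$ using (\ref{2Re}) to kill the convective terms and Korn together with the $1/\l$ smallness of $\mathcal{T}_\l$ to control the coupling term $(\mathcal{T}_\l\div\mathbf{u},\div\mathbf{v})_\Op$. The only cosmetic difference is that the paper splits the pressure as $\tilde q[g]+\tilde q[\tilde\mu]$ and carries the pieces separately, whereas you package the $\tilde\mu$-dependent part directly into the bilinear form; your presentation is arguably cleaner.
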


\begin{proof}
We aim to show that, for $[\mathbf{f}, g] \in \mathbf{L}^2(\Op) \times L^2(\Op)$, $\mathbb{A}_\l(\tilde \mu, \tilde q)   = [\mathbf{f}, g]^T$ has a unique, bounded solution.
Consider the system
\beq \label{psystem}\begin{cases}
\l \tilde\mu + \bU \cdot \grad \tilde\mu - \div \, \sigma(\tilde\mu) + \frac12\div(\bU) \tilde\mu + \grad \tilde q = \mathbf{f} & \text{ in } \Op, \\
\l \tilde q + \div(\tilde\mu) + \bU \cdot \grad \tilde q + \frac12 \div(\bU) \tilde q = g & \text{ in }\Op,\\
\tilde \mu \vert_{\partial \Op} = \mathbf{0}& \text{ on } \partial \Op.
\end{cases}
\eeq

To resolve the ``pressure" term, $\tilde q$, we again apply Theorem 9.1 in \cite{AGW}, which states that there exists some $\gamma \in L^2(\Op)$ such that \beq \label{Thm9.1result} \l \gamma + \bU \cdot \grad \gamma + \frac12 \div(\bU) \gamma = \sigma \in L^2(\Op) \eeq 
with $||\gamma||_{\Op} \leq \frac1\lambda || \sigma||_\Op$ for a sufficiently large $\lambda$. 

With $\tilde \mu \in \mathbf{H}^1_0(\Op)$, note that we can apply this twice, once with $\sigma = g$ and once with  $\sigma = \div(\tilde \mu)$ (both in $L^2(\Op)$) to obtain solutions  $\tilde q[g]$ and $\tilde q[\tilde\mu]$ to (\ref{Thm9.1result}) with 
\beq \label{qtildebound}
||\tilde q [g]||_{\Op} \leq \frac1\l ||g||_\Op
\eeq
and 
\beq \label{qtilde_mu_bound}
||\tilde q[\tilde\mu]||_\Op \leq \frac 1\lambda ||\div (\tilde \mu)||_\Op \leq \frac{c}\l ||\tilde\mu||_{\mathbf{H}^1(\Op)}.
\eeq
Thus, let $\tilde q = \tilde q [g] + \tilde q[\tilde \mu]$. 

Again note that this gives solution $\tilde q$ in the weak sense; that is, for any $\varphi \in \mathbf{L}^2(\Op)$, we have 
\beq \label{resolve_qg}
\lambda (\tilde q[g], \varphi)_\Op + (\bU \cdot \grad \tilde q[g], \varphi)_\Op + \frac12 (\div(\bU) \tilde q[g], \varphi)_\Op = (g, \varphi)_\Op.
\eeq
and \beq \label{resolve_qdivmu}
\lambda (\tilde q[\tilde \mu], \varphi)_\Op + (\bU \cdot \grad \tilde q[\tilde \mu], \varphi)_\Op + \frac12 (\div(\bU) \tilde q[\tilde \mu], \varphi)_\Op = (\div(\tilde \mu), \varphi)_\Op.
\eeq
But since this holds for all $\varphi \in \mathbf{L}^2(\Op)$, it holds in particular for all $\varphi \in \mathcal{D}(\Op)$. Thus,
\beq
\lambda \tilde q[g] + \bU \cdot \grad \tilde q[g] + \frac 12 \div (\bU)\tilde q[g] = g
\eeq
 and 
 \beq \lambda \tilde q[\tilde \mu] + \bU \cdot \grad \tilde q[\tilde \mu] + \frac 12 \div (\bU)\tilde q[\tilde \mu] = \div(\tilde \mu)
 \eeq
  a.e. in $\Op$.

We then define the bilinear form $\tilde a(\cdot, \cdot): \mathbf{H}^1_0(\Op) \times \mathbf{H}^1_0(\Op) \to \R$ as 
\begin{align}
 \tilde a(\mathbf{v}, \psi) & = (\l \mathbf{v} + \bU \cdot \grad \mathbf{v} - \div \, \sigma(\mathbf{v}) + \frac12 \div(\bU) \mathbf{v} + \grad \tilde q[\mathbf{v}], \psi)_\Op \nonumber \\
 & = (\lambda \mathbf{v} + \bU \cdot \grad \mathbf{v} + \frac12 \div(\bU)\mathbf{v}, \psi)_\Op + (\sigma(\mathbf{v}), \e(\psi))_\Op  \nonumber \\
 & \hspace{.5in} - \la \sigma(\mathbf{v}) \cdot \vec{\nu}, \psi \ra_{\partial \Op} - (\tilde q [\mathbf{v}], \div(\psi))_\Op + \la \tilde q[\mathbf{v}], \psi\cdot \bnu \ra_{\partial \Op}\nonumber \\
 & = (\lambda \mathbf{v} + \bU \cdot \grad \mathbf{v} + \frac12 \div(\bU)\mathbf{v}, \psi)_\Op + (\sigma(\mathbf{v}), \e(\psi))_\Op   - (\tilde q [\mathbf{v}], \div(\psi))_\Op  
 \end{align}
for all $ \mathbf{v}, \psi \in \mathbf{H}_0^1(\Op).$

Also let linear form $\tilde f: \mathbf{H}_0^1(\Op) \to \R$ be given by 
\begin{align}
\tilde f(\psi) & = (\mathbf{f},\psi)_\Op + ( \tilde q[g], \div(\psi))_\Op - \la \tilde q[g], \psi\cdot \bnu\ra_{\partial \Op} \\
& =  (\mathbf{f},\psi)_\Op + ( \tilde q[g], \div(\psi))_\Op  \text{ for all } \psi \in \mathbf{H}_0^1(\Op).
\end{align}
So the variational formulation is: 

Find $\tilde\mu \in \mathbf{H}_0^1(\Op)$ such that, for all $\psi \in \mathbf{H}_0^1(\Op)$, 
\beq \label{VarFormLaxMil}
\tilde a(\tilde \mu, \psi) = \tilde f( \psi).
\eeq

With the aim of applying Lax-Milgram to (\ref{VarFormLaxMil}), note that:
\begin{itemize}
\item $\tilde a( \cdot, \cdot)$ is clearly bilinear and $\tilde f(\cdot)$ is linear, due to the linearity of the integral. 
\item $\tilde a(\cdot, \cdot)$ is bounded: For $\mathbf{v}, \psi \in \mathbf{H}_0^1 (\Op)$ and then applying the Triangle Inequality and Cauchy-Schwarz Inequality,
\begin{align}
\tilde a(\mathbf{v}, \psi) & = (\lambda \mathbf{v} + \bU \cdot \grad \mathbf{v} + \frac12 \div(\bU)\mathbf{v}, \psi)_\Op + (\sigma(\mathbf{v}), \e(\psi))_\Op   - (\tilde q [\mathbf{v}], \div( \psi))_\Op \\
 & \leq \left( \l ||\mathbf{v}||_\Op + ||\bU||_{L^\infty(\Op)}||\grad \mathbf{v}||_\Op + \frac12\vert\vert\div(U)\mathbf{v} \vert\vert_{\Op} \right) ||\psi||_\Op \nonumber \\
& \hspace{.5in} + ||\sigma(\mathbf{v})||_\Op ||\e(\psi)||_\Op + ||\tilde q[\mathbf{v}] ||_\Op ||\div(\psi)||_\Op \\
& \leq \l ||\mathbf{v}||_{\mathbf{H}^1_0(\Op)}||\psi||_{\mathbf{H}^1_0(\Op)}  + C_U||\mathbf{v}||_{\mathbf{H}_0^1(\Op)}||\psi||_{\mathbf{H}^1_0(\Op)} \nonumber \\
&  \hspace{.5in}+ \tilde C ||\div(\bU)||_{L^\infty(\Op)} ||\mathbf{v}||_{\mathbf{H}^1_0(\Op)} ||\psi||_{\mathbf{H}^1_0(\Op)} \nonumber \\
& \hspace{.5in}+ ||\mathbf{v}||_{\mathbf{H}^1_0(\Op)} ||\psi||_{\mathbf{H}^1_0(\Op)} + ||\tilde q[\mathbf{v}] ||_{\mathbf{H}^1_0(\Op)} ||\psi||_{\mathbf{H}^1_0(\Op)}\\
& \leq C_1 ||\mathbf{v}||_{\mathbf{H}^1_0(\Op)}||\psi||_{\mathbf{H}^1_0(\Op)},
\end{align}

where $\vert\vert\div(\bU)\mathbf{v} \vert\vert_{\Op} \leq ||\div(\bU)||_{L^\infty(\Op)} ||\mathbf{v}||_\Op \leq C_\bU ||\mathbf{v}||_\Op$ as Sobolev embedding gives $ \bU \in \mathbf{H}^2(\Op) \hookrightarrow \mathbf{C}^1(\overline{\Op})$. 

\item $\tilde a( \cdot, \cdot)$ is $\mathbf{H}_0^1(\Op)$-elliptic:
First note that, as in (\ref{2Re}) and considering only $\R$-valued functions, $2(\bU \cdot \grad \psi, \psi)_{\Op} = - (\div(\bU), |\psi|^2)_{\Op}$, and so $(\bU \cdot \grad \psi + \frac12 \div(\bU)\psi, \psi)_\Op = 0$ for all $\psi \in \mathbf{H}_0^1(\Op)$.  

Also, 
\begin{align}
\left|(\tilde q[\psi], \div(\psi))_\Op \right| & \leq || \tilde q[\psi]||_\Op ||\div(\psi)||_\Op \nonumber\\
& \leq \left( \frac1 \l ||\psi||_{\mathbf{H}_0^1(\Op)} \right) ||\psi||_{\mathbf{H}^1(\Op)}  \text{ (by }(\ref{qtildebound}))\nonumber\\
& \leq \frac{c}\l ||\psi||_{\mathbf{H}^1(\Op)}^2.
\end{align}  

Also note that 
$$ (\sigma(\psi), \e(\psi))_{\Op} = 2 \bnu ||\e(\psi)||^2_{\Op} + \l(\div(\psi))^2.$$

And finally, $$\lambda ||\psi||^2_{\Op} + 2 \bnu||\e(\psi)||^2_\Op \leq \tilde c ||\psi||^2_{\mathbf{H}^1(\Op)}.$$

Putting all of this together gives
\begin{align}
\tilde a( \psi, \psi ) & = \lambda(\psi, \psi)_{\Op} + (\sigma(\psi), \e(\psi))_\Op - (\tilde q[\psi], \div(\psi))_\Op \nonumber\\
& =  \l ||\psi||^2_{\Op} + 2 \bnu ||\e(\psi)||^2_{\Op} + \l(\div(\psi))^2- (\tilde q[\psi], \div(\psi))_\Op \nonumber \\
& \geq \lambda ||\psi||_\Op^2 + 2 \bnu||\e(\psi)||_\Op^2 + 0 - \frac{c}\l ||\psi||_{\mathbf{H}^1(\Op)} \nonumber\\
& \geq \tilde c ||\psi||_{\mathbf{H}^1(\Op)}^2  - \frac{c}\l ||\psi||_{\mathbf{H}^1(\Op)} \text{ (by Korn's Inequality)} \nonumber \\
& \geq \frac{\tilde{c}}{2} ||\psi||_{\mathbf{H}^1(\Op)}^2 \text{ for sufficiently large $\l$},
\end{align}
which establishes the ellipticity of $\tilde a(\cdot, \cdot)$.
\end{itemize}
Thus, by Lax-Milgram, there exists a unique solution $\tilde\mu \in \mathbf{H}_0^1(\Op)$ to (\ref{VarFormLaxMil}). 

So with this $\tilde \mu$ in hand and setting $\tilde q = \tilde q[g] + \tilde q[ \tilde\mu] \in L^2(\Op)$, we now show that $\tilde \mu$ and $\tilde q$ satisfy (\ref{psystem}):

From (\ref{resolve_qg}) and (\ref{resolve_qdivmu}), the second equation of (\ref{psystem}) is clearly satisfied with 
$$
||\tilde q||_\Op \leq \frac{c}{\l} (||g||_\Op + ||\tilde \mu||_{\mathbf{H}^1(\Op)}).
$$
As for satisfying the first equation in (\ref{psystem}), note that for any $\psi \in \mathbf{H}^1_0(\Op)$, we have, via Green's Identities and boundary terms dropping out,
\begin{align}
&\tilde a(\tilde \mu, \psi) = \tilde f(\psi) \nonumber \\
& \Rightarrow  (\lambda \tilde \mu + \bU \cdot \grad \tilde \mu + \frac12 \div(\bU)\tilde \mu, \psi)_\Op + (\sigma(\tilde \mu), \e(\psi))_\Op   - (\tilde q [\tilde \mu], \div(\psi))_\Op \\
& =  (\mathbf{f},\psi)_\Op + ( \tilde q[g], \div(\psi))_\Op .
\end{align}

Setting $\tilde q = \tilde q[\mathbf{v}] + \tilde q[g]$, this gives
\beq \label{tilde_mu_var_form}
(\lambda \tilde \mu+ \bU \cdot \grad \tilde \mu + \frac12 \div(\bU)\tilde \mu, \psi)_\Op + (\sigma(\tilde \mu), \e(\psi))_\Op   - (\tilde q, \div(\psi))_\Op \\
 =  (\mathbf{f},\psi)_\Op.
\eeq
Since this holds for all $\psi \in \mathbf{H}_0^1(\Op)$, it holds in particular for $\psi \in \mathbf{C}_C^\infty(\Op)$. So we infer that the corresponding solution $\tilde \mu$ satisfies (\ref{psystem}) a.e. in $\Op$.

To show the boundedness, note that Lax-Milgram gives 
\begin{align} \label{mu_tilde_bound}
||\tilde \mu||_{\mathbf{H^1(\Op)}} & \leq C (||\mathbf{f}||_{\mathbf{L}^2(\Op)}|| + ||\tilde q[g]||_\Op) \nonumber \\
& \leq C (||\mathbf{f}||_{\mathbf{L}^2(\Op)}|| + \frac1\l ||g||_\Op),
\end{align}
after an application of (\ref{qtildebound}). Additionally, using (\ref{qtildebound}) and (\ref{qtilde_mu_bound}), 
\begin{align} \label{q_tilde_bound}
||\tilde q||_{\Op} & \leq ||\tilde q[g]||_\Op + ||\tilde q(\tilde \mu)||_\Op \nonumber \\
& \leq C||g||_\Op + \frac{c}{\l} ||\tilde \mu||_{\mathbf{H}^1(\Op)} \nonumber\\
& \leq C||g||_\Op + C_\l (||\mathbf{f}||_{\mathbf{L}^2(\Op)}|| +  ||g||_\Op)\nonumber \\
& \leq \tilde C (||\mathbf{f}||_{\mathbf{L}^2(\Op)}|| +  ||g||_\Op),
\end{align}
for some $\tilde C = \tilde C(\l) > 0$.

Thus, (\ref{mu_tilde_bound}) - (\ref{q_tilde_bound}) gives the boundedness of inverse map, $\mathbb{A}_\l^{-1}$.

\end{proof}

And now returning to our maximality argument. 

With this $D_\lambda$ map of (\ref{Dlambda}) in hand and applying Green's Theorem to each term, we have for $\varphi \in \mathbf{H}^1_{\partial \Om \setminus \Gamma}(\Om)$, 
\begin{align} \label{sigma_on_bound}
&\left \la \sigma(\bup) \vec{\nu} - p^+ \vec{\nu}, \varphi \right\ra_\Gamma \nonumber\\
&= (\div \, \sigma(\bup), \mu_\l(\varphi|_{\Gamma}))_{\Op}  + (\sigma(\bup), \e(\mu_\l(\varphi|_{\Gamma})))_{\Op} - (p^+, \div(\mu_\l(\varphi|_\Gamma)))_{\Op}  \nonumber \\
& \hspace{.5in} - (\grad p^+, \mu_\l(\varphi|_\Gamma))_{\Op} .
\end{align}

Using (\ref{varequs}) to rewrite the first term, this becomes
\begin{align}\label{ten}
 & (\lambda \bup + \bU \cdot \grad \bup + \frac 12 \div(\bU)\bup + \grad p^+, \mu_\l(\varphi|_\Gamma))_{\Op} - (\mathbf{f},\mu_\l(\varphi|_\Gamma))_{\Op} \nonumber\\
&  \hspace{.5in}+ (\sigma(\bup), \e(\mu_\l(\varphi|_\Gamma)))_{\Op} - (p^+, \div(\mu_\l(\varphi|_\Gamma)))_{\Op} - (\grad p^+, \mu_\l(\varphi|_\Gamma))_{\Op} \nonumber\\
& = (\lambda \bup + \bU \cdot \grad \bup + \frac 12 \div(\bU) \bup, \mu_\l(\varphi|_\Gamma))_{\Op} - (\mathbf{f},\mu_\l(\varphi|_\Gamma))_{\Op} \nonumber\\
&  \hspace{.5in}+ (\sigma(\bup), \e(\mu_\l(\varphi|_\Gamma)))_{\Op} - (p^+, \div(\mu_\l(\varphi|_\Gamma)))_{\Op}.
 \end{align}

Now to eliminate $\bup$ and $p^+$, consider the following BVP:
\beq \label{Op_problem}
\begin{cases}
\l \bup + \bU\cdot \grad \bup - \div \, \sigma(\bup) + \frac 12 \div(\bU) \bup + \grad p^+ = \mathbf{f} & \text{ in } \Op, \\
\l p^+ + \div(\bup) + \bU \cdot \grad p^+ + \frac12 \div(\bU) p^+ = g & \text{ in } \Op,\\
\bup = 0 & \text{ on } \partial \Op \setminus \Gamma, \\
\bup = \bum &\text{ on } \Gamma, \\
\sigma(\bup) \vec{\nu} - p^+ \vec{\nu} = \frac{\partial \bum}{\partial \vec{\nu}} - p^- \vec{\nu} & \text{ on } \Gamma.
\end{cases}
\eeq

Furthermore, let $\bbm \tilde \mu \\ \tilde q \ebm = \bA^{-1}_\l\bbm \mathbf{f} \\ g \ebm \in \mathbf{H}^1_0(\Op) \times L^2(\Op)$ for $\mathbf{f} \in \mathbf{L}^2(\Op)$ and \\
 $g \in L^2(\Op)$. The existence of such $\tilde \mu$ and $\tilde q$ was shown in Lemma \ref{A_lemma}. 

Then we see combining the maps $D_\lambda$ and $\mathbb{A}_\lambda$,  we have 
\begin{align}\label{up,pp} \bbm \bup \\ p^+ \ebm & = D_\l(\bum|_{\Gamma}) + \bA^{-1}_\l\left([\mathbf{f},  g ]^T \right)\nonumber,\\
& = \bbm \mu_\l(\bum|_{\Gamma}) \\ q_\l(\bum|_{\Gamma}) \ebm + \bbm \tilde u(f,g) \\ \tilde q(f,g) \ebm.
\end{align}
which establishes a portion of  Theorem 1(ii). 

Combining  (\ref{seven}) and (\ref{ten}) gives
\begin{align}
& \l(\bum, \varphi)_\Om + (\grad \bum, \grad \varphi)_{\Om} - (p^-, \div(\varphi))_\Om + [ \l(\bup, \mu_\l(\varphi|_\Gamma))_\Op\nonumber  \\
&  \hspace{.5in}+ (\bU \cdot \grad \bup, \mu_\l(\varphi|_\Gamma))_\Op + \frac12(\div(\bU) \bup, \mu_\l(\varphi|_\Gamma))_\Op \nonumber\\
& \hspace{.5in}+ (\sigma(\bup), \e(\mu_\l(\varphi|_\Gamma)))_\Op - (p^+, \div(\mu_\l(\varphi|_\Gamma)))_\Op - (\mathbf{f}, \mu_\l(\varphi|_\Gamma))_{\Op} ]  = (\mathbf{h},\varphi)_\Om
\end{align}
for $\varphi \in\mathbf{H}^1_{\partial\Om \setminus \Gamma} (\Om)$. 

Now using $\bup = \mu_\l(\bum|_{\Gamma}) + \tilde \mu(\mathbf{f},g)$ and $p^+ = q_\l(\bum|_{\Gamma}) + \tilde q(\mathbf{f},g)$ and subtracting the $\tilde \mu(\mathbf{f},g)$, $\tilde q(\mathbf{f},g)$ terms to the right hand side, we have for all $\varphi \in \mathbf{H}^1_{\partial\Om \setminus \Gamma} (\Om)$,

\begin{align}
&\l(\bum, \varphi)_\Om + (\grad \bum, \grad \varphi)_\Om - (p^-, \div(\varphi))_\Om + \l(\mu_\l (\bum|_\Gamma), \mu_\l(\varphi|_\Gamma))_\Om  \nonumber \\
& \hspace{-.01in}+ (\bU \cdot \grad \mu_\l(\bum|_\Gamma))_\Op + \frac12 (\div(\bU)\mu_\l(\bum|_\Gamma), \mu_\l(\varphi|_\Gamma))_\Op + (\sigma(\mu_\l(\bum|_\Gamma)), \e(\mu_\l(\varphi|_\Gamma)))_\Op \nonumber \\
& - (q_\l (\bum|_\Gamma), \div(\mu_\l(\varphi|_\Gamma)))_\Op \nonumber\\
& = (\mathbf{h}, \varphi)_\Om + (\mathbf{f}, \mu_\l(\varphi|_\Gamma))_\Op - \big[ \l(\tilde\mu(\mathbf{f},g), \mu_\l(\varphi|_\Gamma)))_\Op + (\bU \cdot \grad \tilde\mu(\mathbf{f},g), \mu_\l(\varphi|_\Gamma))_\Op \nonumber \\
& + \frac12(\div(\bU) \tilde\mu(\mathbf{f},g) , \mu_\l(\varphi|_\Gamma))_\Op + (\sigma(\tilde\mu(\mathbf{f},g)), \e(\mu_\l(\varphi|_\Gamma)))_\Op \nonumber \\
& - (\tilde q(\mathbf{f},g), \div(\mu_\l(\varphi|_\Gamma)))_\Op \big].
\end{align}

This becomes 
$$
a_\l(\bum, \varphi)  + b(\varphi, p^-)= F(\varphi) \text{ for all }\varphi \in \mathbf{H}^1_{\partial\Om \setminus \Gamma} (\Om), 
$$
where $a_\l(\cdot, \cdot) : \mathbf{H}_{\partial \Om \setminus \Gamma}^1(\Om) \times \mathbf{H}_{\partial \Om \setminus \Gamma}^1(\Om)  \to \R$ is given by
\begin{align}\label{a_lambda}
a_\l(\psi, \varphi) & = \l(\psi, \varphi)_\Om + \l(\mu_\l (\psi|_\Gamma), \mu_\l(\varphi|_\Gamma))_\Op +(\grad \psi, \grad \varphi)_\Om \nonumber \\
 & + (\bU \cdot \grad \mu_\l(\psi|_\Gamma), \mu_\lambda(\varphi|_\Gamma))_\Op \nonumber 
+ \frac12 (\div(\bU)\mu_\l(\psi|_\Gamma), \mu_\l(\varphi|_\Gamma))_\Op  \\
& + (\sigma(\mu_\l(\psi|_\Gamma)), \e(\mu_\l(\varphi|_\Gamma)))_\Op\nonumber
 - (q_\l (\psi|_\Gamma), \div(\mu_\l(\varphi|_\Gamma)))_\Op \nonumber,\\
\end{align}
 $b(\cdot, \cdot) : \mathbf{H}_{\partial \Om \setminus \Gamma}^1(\Om)  \times L^2(\Om) \to \R$ is given by
\begin{align}\label{b}
b(\varphi, p_0) = - (p_0, \div(\varphi))_\Om,  
\end{align}

and $F(\cdot) : \mathbf{H}_{\partial \Om \setminus \Gamma}^1(\Om)  \to \R$ is given by 
\begin{align}\label{F}
F(\varphi) & = (\mathbf{h}, \varphi)_\Om + (\mathbf{f}, \mu_\l(\varphi)|_\Gamma)_\Op - \big[ \l(\tilde\mu(\mathbf{f},g), \mu_\l(\varphi|_\Gamma)))_\Op \\
& + (\bU \cdot \grad \tilde\mu(\mathbf{f},g), \mu_\l(\varphi|_\Gamma))_\Op \nonumber + \frac12(\div(\bU) \tilde\mu(\mathbf{f},g) , \mu_\l(\varphi|_\Gamma))_\Op \\
& + (\sigma(\tilde\mu(\mathbf{f},g)), \e(\mu_\l(\varphi|_\Gamma)))_\Op \nonumber  - (\tilde q(\mathbf{f},g), \div(\mu_\l(\varphi|_\Gamma)))_\Op \big].
\end{align}

We have then that $[\bum, p^-]$ solves 
\beq \label{compact_var_form}
\begin{cases}
a_\l(\bum, \varphi) + b(\varphi, p^-) = F(\varphi) & \text{ for all } \varphi \in \mathbf{H}^1_{\partial \Om \setminus \Gamma} (\Om)\\
-b(\bum, \rho) = 0 &  \text{ for all } \rho \in L^2(\Om)
\end{cases}.
\eeq

With a view towards applying the Babuska-Brezzi Theorem (Theorem \ref{thm:BBT}) to this system, we verify
\begin{itemize}
\item Continuity of $a_\l(\cdot, \cdot)$ and $b(\cdot, \cdot)$:

To show $b(\cdot, \cdot)$ is continuous, let $\varphi \in \mathbf{H}^1_{\partial \Om \setminus \Gamma}(\Om)$ and $\rho \in L^2(\Om)$. Applying the  Cauchy-Schwarz and Poincare Inequalities: 
\begin{align}
b(\varphi, \rho) & = (-\rho, \div(\varphi))_{\Om} \nonumber\\
& \leq ||\rho||_\Om ||\div(\varphi)||_\Om \nonumber \\
& = C ||\rho||_\Om ||\varphi||_{\mathbf{H}^1_{\partial \Om \setminus \Gamma} (\Om)}.
\end{align}

To show $a_\l(\cdot, \cdot)$ is continuous, let $\varphi, \psi \in \mathbf{H}_{\partial \Om \setminus \Gamma}^1(\Om)$. Again applying the Cauchy-Schwarz and Triangle Inequalities to each term in (\ref{a_lambda}) gives,
\begin{align} 
a_\l(\psi, \varphi)  \leq &  \l||\psi||_\Om ||\varphi||_\Om + \l ||\mu_\l(\psi|_\Gamma)||_\Om|| \mu_\l(\varphi|_\Gamma)||_\Om + ||\grad \psi||_\Om ||\grad \varphi||_\Om \nonumber\\
& + ||\sigma(\mu_\l(\psi|_\Gamma))||_\Op||\e(\mu_\l(\varphi|_\Gamma))||_\Op + ||\bU \cdot \grad \mu_\l(\psi|_\Gamma)||_\Op ||\mu_\l(\varphi|_\Gamma)||_\Op \nonumber\\
& \label{acont1} + \frac12 ||\div(\bU) \mu_\l(\psi|_\Gamma)||_\Op ||\mu_\l(\varphi|_\Gamma)||_\Op \nonumber \\
& + ||q_\l(\psi|_\Gamma)||_\Op||\div(\mu_\l(\varphi|_\Gamma))||_\Op.
\end{align}

Now recall that $\bU \in \mathbf{H}^2(\Op)$ so by Sobolev embedding with dimension $n = 2$, we have $\bU \in C^1\left(\overline{\Op}\right)$. Thus, 
\beq \label{Uinfnorm} ||\bU\cdot \grad \mu_\l(\psi|_\Gamma)||_\Op \leq ||\bU||_{\mathbf{L}^\infty(\Op)} ||  \grad \mu_\l(\psi|_\Gamma)||_\Op \eeq and 
\beq \label{gradUinfnorm}  ||\div(\bU) \mu_\l(\psi|_\Gamma)||_\Op \leq ||\div(\bU)||_{\mathbf{L}^\infty(\Op)} ||\mu_\l(\psi|_\Gamma)||_\Op,
\eeq

with $||\bU||_{\mathbf{L}^\infty(\Op)} < \infty$ and $||\div(\bU)||_{\mathbf{L}^\infty(\Op)} < \infty$.

And since $D_\l(\cdot) = \bbm \mu_\l(\cdot) \\ q_\l(\cdot) \ebm$, as defined in (\ref{Dlambda}), is a well-posed mapping, giving continuous dependence on data, and using Sobolev trace regularity, we have 
\beq \label{ctnsdep} ||\mu_\l (\varphi|_\Gamma)||_\Op + ||q_\l(\varphi|_\Gamma)||_\Op \leq C||\varphi||_\Om.\eeq

With $\psi, \varphi \in \mathbf{H}^1_{\partial \Om\setminus \Gamma}(\Om)$, combining (\ref{acont1}), (\ref{Uinfnorm}), (\ref{gradUinfnorm}), (\ref{ctnsdep}), and Poincare's Inequality 
gives
\begin{align}
a_\l(\psi, \varphi)  \leq  \, \, \, &C \big[  \l||\psi||_{\mathbf{H}^1_{\partial \Om \setminus \Gamma} (\Om)} ||\varphi||_{\mathbf{H}^1_{\partial \Om \setminus \Gamma}(\Om)} + 
\l ||\psi||_{\mathbf{H}^1_{\partial \Om \setminus \Gamma}(\Om)}|| \varphi||_{\mathbf{H}^1_{\partial \Om \setminus \Gamma}(\Om)} \nonumber \\
&+ ||\psi||_{\mathbf{H}^1_{\partial \Om \setminus \Gamma}(\Om)} ||\varphi||_{\mathbf{H}^1_{\partial \Om \setminus \Gamma}(\Om)} 
 + ||\sigma(\mu_\l(\psi|_\Gamma))||_\Op||\e(\mu_\l(\varphi|_\Gamma))||_\Op \nonumber \\
&+ ||\bU||_{\mathbf{L}^\infty(\Op)} || \psi||_{\mathbf{H}^1_{\partial \Om \setminus \Gamma}(\Om)} ||\varphi||_{\mathbf{H}^1_{\partial \Om \setminus \Gamma}(\Om)} \nonumber\\
& \label{acont1} + \frac12 ||\div(\bU)||_{\mathbf{L}^\infty(\Op)}||\psi||_{\mathbf{H}^1_{\partial \Om \setminus \Gamma}(\Om)} ||\varphi||_{\mathbf{H}^1_{\partial \Om \setminus \Gamma}(\Om)} \nonumber \\
& + ||\psi||_{\mathbf{H}^1_{\partial \Om \setminus \Gamma}(\Om)}||\varphi||_{\mathbf{H}^1_{\partial \Om \setminus \Gamma}(\Om)}\big].
\end{align}

Further, note that, given the definitions of $\sigma(\cdot)$ and $\epsilon(\cdot)$ in terms of first partial derivatives, 
\begin{align}
||\sigma(\mu_\l(\psi|_\Gamma))||_\Op||\e(\mu_\l(\varphi|_\Gamma))||_\Op&  \leq  \tilde C ||\mu_\l(\psi|_\Gamma)||_{\mathbf{H}^1(\Op)} ||\mu_\l(\varphi|_\Gamma))||_{\mathbf{H}^1(\Op)} \nonumber \\
& \leq C^*||\psi||_\Om ||\varphi||_\Om  \nonumber \\
& \leq C^*||\psi||_{\mathbf{H}^1(\Om)} ||\varphi||_{\mathbf{H}^1(\Om)},
\end{align}
where the second line follows from the wellposedness of $\mu_\lambda$ and $q_\l$, as well as Sobolev trace regularity. 
Putting all of this together gives
\beq 
a_\l(\psi, \varphi) \leq C_U ||\psi||_{\mathbf{H}^1_{\partial \Om \setminus \Gamma}(\Om)} ||\varphi||_{\mathbf{H}^1_{\partial \Om \setminus \Gamma}(\Om)},
\eeq 
which gives continuity.

\item To show $a_\l(\cdot, \cdot)$ is $\mathbf{H}^1_{\partial \Om \setminus \Gamma}(\Om)$-elliptic, first note that by the second equation in  (\ref{Dlambda}), we have for all $\varphi \in \mathbf{H}^1_{\partial \Om \setminus \Gamma}$,
$$\div(\mu_\l(\varphi|_\Gamma)) = -\l q_\l(\varphi|_\Gamma) - \bU \cdot \grad q_\l(\varphi|_\Gamma) - \frac12 \div(\bU) q_\l(\varphi|_\Gamma) \, \, \text{ in } \Op.$$

Thus for $\varphi \in \mathbf{H}^1_{\partial \Om \setminus \Gamma}(\Om)$, 
\begin{align}  \label{ellipticity_of_alambda}
a_\l(\varphi, \varphi)  = & \l ||\varphi||^2_\Om + \l ||\mu_\l(\varphi|_\Gamma)||^2_\Op + ||\grad \varphi||^2_\Om + (\sigma(\mu_\l(\varphi|_\Gamma)), \e(\mu_\l(\varphi|_\Gamma)))_\Op \nonumber \\
& + (\bU \cdot \grad \mu_\l(\varphi|_\Gamma), \mu_\l(\varphi|_\Gamma))_\Op  + \frac12 (\div(\bU) \mu_\l(\varphi|_\Gamma), \mu_\l(\varphi|_\Gamma))_\Op \nonumber \\
& - (q_\l(\varphi|_\Gamma), \div(\mu_\l(\varphi|_\Gamma)))_\Op.
\end{align}

 Note that, as in (\ref{a_elliptic}), 
 
\begin{align} \label{mu_pos_semi_def}
(\sigma(\mu_\l(\varphi|_\Gamma)), \e(\mu_\l(\varphi|_\Gamma)))_\Op & = 2 \nu ||\epsilon(\mu_\l(\varphi|_\Gamma))||^2_{\Op} + \lambda( \div(\mu_\l(\varphi|_\Gamma)))^2 \nonumber \\
&  \geq 0. 
\end{align}

 Also, by (\ref{2Re}), 
 \beq \label{mu_2Re}
 (\bU \cdot \grad \mu_\l(\varphi|_\Gamma), \mu_\l(\varphi|_\Gamma))_\Op  + \frac12 (\div(\bU) \mu_\l(\varphi|_\Gamma), \mu_\l(\varphi|_\Gamma))_\Op = 0.
 \eeq 

Finally, note that, with (\ref{Dlambda}) in mind, 
$$
\div(\mu_\l(\varphi|_\Gamma)) = - \l q_\l(\varphi|_\Gamma) - \bU \cdot \grad q_\l(\varphi|_\Gamma) - \frac12 \div(\bU) q_\l(\varphi|_\Gamma),
$$
and so 
\begin{align}\label{q_div}
-(q_\l(\varphi|_\Gamma), \div(\mu_\l(\varphi|_\Gamma)))_\Op & = (q_\l(\varphi|_\Gamma),  \l q_\l(\varphi|_\Gamma) + \bU \cdot \grad q_\l(\varphi|_\Gamma) \nonumber \\
& + \frac12 \div(\bU) q_\l(\varphi|_\Gamma))_\Op \nonumber \\
& = \l||q_\l(\varphi|_\Gamma)||_{\Op}^2 + 0\\
& \geq 0, 
\end{align}
where the  $0$ in the second to last line comes from (\ref{2Re}) again.

Thus, using Poincar\`e's Inequality on the first and third terms of (\ref{ellipticity_of_alambda}), and (\ref{mu_pos_semi_def}) and (\ref{q_div}) on  the others, 

\begin{align} 
a_\l(\varphi, \varphi) & \geq \l \cdot C ||\varphi||^2_{\mathbf{H}^1_{\partial \Om \setminus \Gamma}(\Om)} +0 + \tilde C||\varphi||^2_{\mathbf{H}^1_{\partial \Om \setminus \Gamma}(\Om)} + 0  + 0 + 0 \nonumber \\
& \geq \alpha ||\varphi||_{\mathbf{H}^1_{\partial \Om \setminus \Gamma}(\Om)}^2,
\end{align}
for some constant $\alpha > 0$, which establishes ellipticity.

\item To show the so-called Inf-Sup condition, we make note of the following lemma from \cite{Galdi}: 
\begin{lemma}
For $\Omega \subset \mathbb{R}^n$ that is bounded, open, and with Lipshitz boundary $\partial \Omega $, there exists some $\delta > 0$ and $\mu \in [C^\infty(\bar{\Omega})]^n$ such that $\mu \cdot \nu \geq \delta$ a.e. on $\partial \Omega$.
\end{lemma}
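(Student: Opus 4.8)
The plan is to argue by localization: reduce to the model case in which $\partial\Omega$ is the graph of a Lipschitz function, exhibit a \emph{constant} (hence $C^\infty$) vector field whose inner product with the outward normal is uniformly positive there, and then glue the local fields together with a smooth partition of unity, using a compactly contained interior patch so that the gluing cannot spoil the positivity on $\partial\Omega$.

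First I would invoke the definition of a Lipschitz boundary together with the compactness of $\partial\Omega$: there are finitely many open sets $U_1,\dots,U_N$ covering $\partial\Omega$, orthogonal matrices $R_i\in O(n)$, points $b_i\in\R^n$, and Lipschitz functions $\phi_i:\R^{n-1}\to\R$ with Lipschitz constants $L_i$, such that in the rotated coordinates $y=R_i^{-1}(x-b_i)=(y',y_n)$ one has $\Omega\cap U_i=\{y:y_n<\phi_i(y')\}$ and $\partial\Omega\cap U_i=\{y:y_n=\phi_i(y')\}$. (If in some chart $\Omega$ would lie \emph{above} the graph, compose $R_i$ with the reflection $y_n\mapsto-y_n$.) I would also fix an open set $U_0$ with $\overline{U_0}\subset\Omega$ so that $\{U_0,U_1,\dots,U_N\}$ is an open cover of $\overline{\Omega}$.

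In a fixed boundary chart, Rademacher's theorem gives that $\phi_i$ is differentiable a.e., and at each point of differentiability the outward unit normal of $\Omega$, expressed in the $y$-coordinates, equals $\widetilde\nu(y')=(-\nabla\phi_i(y'),\,1)/\sqrt{1+|\nabla\phi_i(y')|^2}$. Hence the coordinate field $e_n=(0,\dots,0,1)$ satisfies $e_n\cdot\widetilde\nu(y')=(1+|\nabla\phi_i(y')|^2)^{-1/2}\ge(1+L_i^2)^{-1/2}=:\delta_i>0$ a.e. on $\partial\Omega\cap U_i$. Transporting back to the original coordinates, I set $\mu_i:=R_i\,e_n$, a constant vector in $\R^n$; since $R_i$ is orthogonal this yields $\mu_i\cdot\nu\ge\delta_i$ a.e. on $\partial\Omega\cap U_i$, and $\mu_i\in[C^\infty(\overline{\Omega})]^n$ trivially. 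Then, choosing a $C^\infty$ partition of unity $\{\psi_i\}_{i=0}^N$ subordinate to $\{U_i\}_{i=0}^N$ with $\psi_i\ge0$ and $\sum_{i=0}^N\psi_i\equiv1$ on $\overline{\Omega}$, I would define $\mu:=\sum_{i=1}^N\psi_i\,\mu_i\in[C^\infty(\overline{\Omega})]^n$. Since $\overline{U_0}\subset\Omega$ forces $\psi_0\equiv0$ on $\partial\Omega$, we get $\sum_{i=1}^N\psi_i\equiv1$ there, so for a.e. $x\in\partial\Omega$ (those at which every relevant $\phi_i$ is differentiable),
\[
\mu(x)\cdot\nu(x)=\sum_{i=1}^N\psi_i(x)\,\bigl(\mu_i\cdot\nu(x)\bigr)\ \ge\ \Bigl(\sum_{i=1}^N\psi_i(x)\Bigr)\min_{1\le i\le N}\delta_i\ =\ \delta,
\]
with $\delta:=\min_i\delta_i>0$; here one uses that $\psi_i(x)\neq0$ implies $x\in U_i$, so $\mu_i\cdot\nu(x)\ge\delta_i$.

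The genuinely delicate points, and the only ones deserving care, are: (i) justifying the a.e. validity and the explicit expression for the outward normal on a Lipschitz subgraph — this is exactly where the ``a.e.'' in the statement enters, via Rademacher's theorem; and (ii) arranging the interior patch $U_0$ so that $\overline{U_0}\subset\Omega$, which guarantees both that the partition of unity restricts to a partition of unity of $\partial\Omega$ and that the interior cutoff $\psi_0$ cannot pull the boundary lower bound down to $0$. Everything else — finiteness of the cover, smoothness of constant fields, and the partition-of-unity bookkeeping — is routine.
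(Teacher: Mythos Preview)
Your argument is correct and is the standard construction: local constant transversal fields in Lipschitz graph charts, glued by a partition of unity with an interior patch to keep $\sum_{i\ge1}\psi_i\equiv1$ on $\partial\Omega$. There is nothing to compare against, however, because the paper does not prove this lemma at all; it is quoted verbatim from \cite{Galdi} and immediately applied. So your proposal supplies a proof where the paper simply cites one.
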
 

Thus, for our domain $\Om$, such a $\delta > 0$ and $\mu \in \mathbf{C}^\infty(\bar{\Om})$ exist with $\mu \cdot \nu \geq \delta$ a.e. on $\partial \Om$. With this $\mu$ in hand, let $\omega \in \mathbf{H}^1_{\partial \Om \setminus \Gamma} (\Om)$  be a solution to 
\beq 
\begin{cases}
\div(\omega) = - \eta \la \mu, \vec{\nu} \ra_\Gamma & \text{ in } \Om, \\
\omega|_{\partial \Om \setminus \Gamma} = 0 & \text{ on } \partial \Om \setminus \Gamma, \\
\omega|_{\Gamma} = \left( \int_{\Om} \eta \, d\Om \right) \mu(x) & \text{ on } \Gamma
\end{cases}
\eeq
for any $\eta \in L^2(\Om)$. Note that this solution, $\omega$, exists by \cite{Galdi} with \\ $||\grad \omega||_\Om \leq C ||\eta||_\Om$.
With these in hand, we now consider 
\begin{align}
\sup_{\varphi \in \mathbf{H}^1_{\partial \Om \setminus \Gamma}(\Om)} \frac{b(\varphi, \eta)}{||\varphi||_{\mathbf{H}^1_{\partial \Om \setminus \Gamma}(\Om)}} & = \sup_{\varphi \in \mathbf{H}^1_{\partial \Om \setminus \Gamma}} \frac{b(\varphi, \eta)}{||\grad \varphi||_{\mathbf{L}^2(\Om)}} \nonumber \\
& = \sup_{ \varphi \in \mathbf{H}^1_{\partial \Om \setminus \Gamma}(\Om)} \frac{- \int \eta \div(\varphi) \, d\Om}{||\grad\varphi||_\Om}  \nonumber \\
& \geq \frac{ - \int \eta \div(\omega) \, d\Om}{||\grad \omega||_\Om} \nonumber \\
& = \frac{\int \eta^2 \la \mu, \nu \ra_{\Gamma} \, d\Om}{||\grad \omega||_\Om} \nonumber \\
& \geq \frac{\delta \cdot \text{meas}(\Gamma) ||\eta||^2_\Om}{||\grad \omega||_\Om} \nonumber \\
& \geq \frac{\delta \cdot \text{meas}(\Gamma) ||\eta||_\Om \left( \frac1C ||\grad \omega||_\Om\right)}{||\grad \omega||_\Om} \nonumber \\
& = \frac1C \,\delta\,  \text{meas}( \Gamma) ||\eta||_\Om.
\end{align}
Thus, since $\eta \in L^2(\Om)$ was arbitrary, we have
\beq
\sup_{\varphi \in \mathbf{H}^1_{\partial \Om \setminus \Gamma}(\Om)} \frac{b(\varphi, \eta)}{||\varphi||_{\partial \Om \setminus \Gamma}} \geq \beta ||\eta||_\Om,
\eeq
with $\beta =  \frac1C \,\delta\,  \text{meas}( \Gamma)$, and so the Inf-Sup condition is satisfied. 

\end{itemize}
In view of the results demonstrated in this subsection, the Babuska-Brezzi Theorem applies to (\ref{compact_var_form}), giving the existence and uniqueness of solutions $[\bum, p^-]$. From there, we  recover \begin{align}\label{up,pp} \bbm \bup \\ p^+ \ebm & = D_\l(\bum|_\Gamma) + \bA^{-1}_\l([\mathbf{f}, g]^T)\nonumber.\\
\end{align}

It remains to show that $[\bup, p^+, \bum] \in \mathcal{D}(\mathcal{A})$. To this end, note that 
\begin{enumerate}

\item[(A.1)] Since $[\bum, p^-] $ solves (\ref{compact_var_form}), it follows that $\bum \in \mathbf{H}^1_{\partial \Om \setminus \Gamma}(\Om)$, $\div(\bum) = 0$ a.e. in $\Om$, and $[\bum \cdot \vec{\nu}]_{\partial \Om \setminus \Gamma} = 0$. Furthermore, $D_\l: \mathbf{H}^{1/2} (\Gamma) \to \mathbf{H}^1_{\partial \Op \setminus \Gamma}(\Op) \times L^2(\Op)$ and $\mathbb{A}_\l: \mathbf{H}^1_0(\Op) \times L^2(\Op) \to \mathbf{L}^2(\Op) \times L^2(\Op)$, so $[\bup, p^+] \in \mathbf{H}^1_{\partial \Op \setminus \Gamma}(\Op) \times L^2(\Op)$. Thus, we have $[\bup, p^+, \bum] \in \mathcal{H}$  with 
$$ [\bup, p^+, \bum]  \in \mathbf{H}^1_{\partial \Omega^+ \setminus \Gamma} (\Omega^+) \times L^2(\Omega^+) \times [\mathbf{H}^1_{\partial \Omega^- \setminus \Gamma}(\Omega^-)].$$

\item[(A.2)] Identifying $p^+ = q_\l (\bum|_\Gamma) + \tilde q(\mathbf{f}, g)$ and given the $\mathbf{L}^2$-equality in (\ref{q_lambda_var_form}), (\ref{resolve_qg}), and (\ref{resolve_qdivmu}), we have that $\bU \cdot \grad p^+ = g - (\l p^+ + \div(\bup) + \frac12 \div(\bU) p^+)$ a.e.  with $g, p^+ \in L^2(\Op)$.
Further, $\bup \in \mathbf{H}^1_{\partial \Op \setminus \Gamma}(\Op)$ so $\div(\bup) \in \mathbf{L}^2(\Op)$. 
Finally, note that Sobolev embedding gives $\bU \in \mathbf{C}^1(\overline{\Op})$, so $\div(\bU) \in C(\overline{\Op})$ and $||\div (\bU) ||_\infty < \infty$. 
Combining all of these gives $\bU \cdot \grad p^+ \in L^2(\Op)$. 

\item[(A.3)] Identifying $\bup = \mu_\l(\bum|_\Gamma) + \tilde \mu(\mathbf{f},g)$ and given the a.e. equality in (\ref{mu_lambda_var_form}) and (\ref{tilde_mu_var_form}), we have that $$\l \bup + \bU \cdot \grad \bup - \div \, \sigma( \bup) + \frac12 \div(\bU) \bup + \grad p^+ = \mathbf{f}.$$ 
Thus, $- \div \, \sigma(\bup) + \grad p^+ = \mathbf{f} - \lambda \bup - \bU \cdot \grad \bup$, with $\mathbf{f} \in \mathbf{L}^2(\Op)$.
Further, $\bup \in \mathbf{H}^1_{\partial \Op \setminus \Gamma}$ and, as before, $\bU \in \mathbf{C}^1(\overline{\Op})$, so $\bU \cdot \grad \bup \in \mathbf{L}^2(\Op)$. 
Combining these gives $- \div \, \sigma(\bup) + \grad p^+ \in \mathbf{L}^2(\Op)$. 

\item[(A.4)] Recall that $\bup = \mu_\l(\bum|_\Gamma) + \tilde \mu(\mathbf{f}, g)$. Thus, given the BVP's given in (\ref{Dlambda}) and (\ref{psystem}), we have 

$$
\bup|_{\Gamma} = \bum|_\Gamma + 0.
$$

\item[(A.5)] Finally, since $[\bum, p^-]$ solve (\ref{compact_var_form}),  we have $\bum \in \mathbf{H}^1_{\partial \Om \setminus \Gamma}(\Om)$,  $p^-\in L^2(\Om)$, and $\div(\bum)  = 0$. 
Furthermore, from (\ref{Lap_um_plus_gradpm}),
$$ 
(\l \bum , \varphi)_{\Omega^-} - (\Lap \bum, \varphi)_{\Om} + (\grad p^-, \varphi)_\Om  = (\mathbf{h}, \varphi)_\Om, 
$$ for all $\varphi \in \mathbf{H}^1_{\partial \Om \setminus \Gamma}(\Om)$. In particular, this holds for all $\varphi \in \mathcal{D}(\Om)$, which gives 
\beq
\lambda \bum - \Lap \bum + \grad p^- = \mathbf{h} \label{um_equn_inL2}
\eeq
in the $L^2$-sense (i.e. almost everywhere). 
Since $\mathbf{h} \in \mathbf{L}^2(\Om) $, it follows then that $-\Lap \bum + \grad p^- = \mathbf{h} - \lambda \bum \in \mathbf{L}^2(\Om)$. 
Additionally, note that 
$$\div(-\Lap \bum + \grad p^-) = - \Lap (\div (\bum)) + \Lap p^- = 0 + 0,$$
 as $\div(\bum) = 0$ and $\Lap p^- = 0$ by (\ref{bvp_for_p}). Finally, since $p^-$ is a solution to (\ref{bvp_for_p}), we have that 
 $\frac{\partial p^-}{\partial \vec{\nu}} = \Lap \bum \cdot \nu$ on $\partial \Om \setminus \Gamma$. Rearranging, we have $$(-\Lap \bum + \grad p^-) \cdot \vec{\nu} = 0$$ on $\partial \Om \setminus \Gamma$. Thus, (A.5a) is established, giving the additional regularity prescribed by Lemma 2. 

Furthermore, from (\ref{compact_var_form}), we also have 
\begin{align}
& \lambda (\bum, \varphi)_\Om + \l (\mu_\l(\bum|_\Gamma), \mu_\l(\varphi|_\Gamma))_\Op + (\grad \bum, \grad \varphi)_\Om  \\
& + (\bU \cdot \grad \mu_\l(\bum|_\Gamma), \mu_\l(\varphi|_\Gamma))_{\Op}\nonumber   + \frac12 (\div(\bU) \mu_\l(\bum|_\Gamma), \mu_\l (\varphi|_\Gamma))_\Op \\
& + (\sigma(\mu_\l(\bum|_\Gamma)), \epsilon(\mu_\l(\varphi|_\Gamma)))_\Op \nonumber - (q_\l(\bum|_\Gamma), \div(\mu_\l(\varphi|_\Gamma)))_\Op- (p^-, \div \varphi)_\Om \\
& = (\mathbf{h} , \varphi)_\Om + (\mathbf{f}, \mu_\l(\varphi|_\Gamma))_\Op \nonumber \\
&- [\l (\tilde \mu, \mu_\l(\varphi|_\Gamma))_\Op + ( \bU \cdot \grad \tilde \mu, \mu_\l(\varphi|_\Gamma))_\Op + \frac12 (\div(\bU) \tilde \mu, \mu_\l(\varphi|_\Gamma))_\Op  \nonumber \\
& + (\sigma(\tilde \mu), \epsilon(\mu_\l(\varphi|_\Gamma)))_\Op - (\tilde q, \div \, \mu_\l(\varphi|_\Gamma))_\Op]
\end{align}
for all $\varphi \in \mathbf{H}^1_{\partial \Om \setminus \Gamma}(\Om)$. 
Identifying $\bup = \mu_\l(\bum|_\Gamma) + \tilde \mu(\mathbf{f},g)$ as in (\ref{up,pp}), this becomes
\begin{align}
& \l(\bum, \varphi)_\Om + (\grad \bum, \grad \varphi)_{\Om} - (p^-, \div(\varphi))_\Om + [ \l(\bup, \mu_\l(\varphi|_\Gamma))_\Op\nonumber  \\
& (\bU \cdot \grad \bup, \mu_\l(\varphi|_\Gamma))_\Op + \frac12(\div(\bU) \bup, \mu_\l(\varphi|_\Gamma))_\Op \nonumber\\
&+ (\sigma(\bup), \e(\mu_\l(\varphi|_\Gamma)))_\Op - (p^+, \div(\mu_\l(\varphi|_\Gamma)))_\Op - (\mathbf{f}, \mu_\l(\varphi|_\Gamma))_{\Op} ]  = (\mathbf{h},\varphi)_\Om.
\end{align}

Further, working backwards using (\ref{sigma_on_bound}), we then have 
\begin{align}
&  \l (\bum , \varphi)_\Om + ( \grad \bum, \grad \varphi)_\Om - (p^-, \div \varphi)_\Om + \la \sigma(\bup)\vec{\nu} - p^+ \vec{\nu}, \varphi \ra_{\Gamma} = (\mathbf{h}, \varphi)_\Om. 
\end{align}
Using Green's Theorem on the second and third terms gives
\begin{align}
&(\l\bum  - \Lap \bum + \grad p^- - \mathbf{h}, \varphi)_\Om + \left< \frac{\partial \bum}{\partial \vec{\nu}} \vec{\nu} - p^- \vec{\nu}, \varphi \right>_\Gamma \nonumber \\
& \hspace{.5in} +  \la \sigma(\bup)\vec{\nu} - p^+ \vec{\nu}, \varphi \ra_{\Gamma} = 0
\end{align}
for all $\varphi \in \mathbf{H}^1_{\partial \Om \setminus \Gamma}(\Om)$. Using (\ref{um_equn_inL2}), this gives 
\beq
\left< \frac{\partial \bum}{\partial \vec{\nu}} \vec{\nu} - p^- \vec{\nu}, \varphi \right>_\Gamma +  \la \sigma(\bup)\vec{\nu} - p^+ \vec{\nu}, \varphi \ra_{\Gamma} = 0\text{ for all } \varphi \in \mathbf{H}^1_{\partial \Om \setminus \Gamma}(\Om).
\eeq

Using surjectivity of the Sobolev trace map, this now gives 
\beq 
\sigma(\bup) \vec{\nu} - p^+ \vec{\nu} = \frac{\partial \bum}{\partial \vec{\nu}} - p^- \vec{\nu} \text{ on } \Gamma \text{ in } \mathbf{H}^{-1/2}(\Gamma),
\eeq
which establishes (A.5b).

\end{enumerate}

Thus, we indeed have $[\bup, p^+, \bum] \in \mathcal{D}(\mathcal{A})$.

Since we have shown $\mathcal{A}$ is maximal dissipative, the Lumer-Phillips Theorem applies, giving a $C_0$-semigroup of contractions on $ \mathcal{H}$, establishing Theorem 1(i). Additionally, the identifications in (\ref{compact_var_form}) and (\ref{up,pp}) establish Theorem 1(ii).

\section{Numerical Analysis of the Fluid-Fluid Dynamics}

The objective of this section is to demonstrate how the maximality argument given in Section \ref{sec:Maximality} can be utilized to approximate solutions to the fluid-fluid interaction PDE system under present consideration. In particular, the numerical method outlined here solves the static problem given by (\ref{varequs}) - (\ref{resolventequn}), but this approach can be modified to solve the time dependent problem in the same way as in \cite{Avalos&Dvorak}, via the framework laid out in \cite{Pazy}. That is, after obtaining the solution of the static problem, the exponential formula for the semigroup is used to generate time-dependent solutions, i.e. 
  \beq
  \bbm \bup(t) \\ p^+(t) \\ \bum(t) \ebm = e^{\mathcal{A}t} \bbm \mathbf{u}_0^+ \\ p^+_0 \\ \mathbf{u}_0^- \ebm 
  = \lim_{n \to \infty} \left( I - \frac{t}{n}\mathcal{A} \right)^{-n} \bbm \mathbf{u}_0^+ \\ p^+_0 \\ \mathbf{u}_0^- \ebm 
 \text{ for } \bbm \mathbf{u}_0^+ \\ p^+_0 \\ \mathbf{u}_0^- \ebm \in \mathcal{H}.
  \eeq
Setting $\lambda = \frac{n}{t}$ in this equation gives
\beq
(\lambda I - \mathcal{A})^n \bbm \bup(t) \\ p^+(t) \\ \bum(t) \ebm = \lambda^n \bbm \mathbf{u}_0^+ \\ p^+_0 \\ \mathbf{u}_0^- \ebm ,
\eeq  
which can be solved using the scheme derived from the maximality argument. By choosing $n$ large enough to approximate the exponential semigroup operator, one can then recover the solution to the time dependent system at any time $t \geq 0$. 

Here we outline a numerical implementation of obtaining approximate solutions using the finite element method (FEM) and associated convergence results with respect to the discretization parameter, $h$. We conclude by providing a specific ``test problem" as a numerical example, with associated error analysis.

\subsection{Finite element formulation} \label{sec:approx_sols}
The finite element method is a numerical implementation of the Ritz-Galerkin method using certain basis functions defined on a mesh on the domain. In this case, both domains, $\Op$ and $\Om$, are divided into triangles, with basis functions associated with points, or ``nodes", on the mesh. In this finite dimensional setting, the system is then set forth as a matrix-vector equation whose solution gives the coefficients of each basis function (see \cite{AB}). In this section, we show that the discrete FEM formulation of (\ref{varequs}) - (\ref{resolventequn}) is wellposed. 

In what follows, the domains $\Om$ and $\Op$ will be taken as before, that is, two rectangles sharing a common interface, $\Gamma$. Now given $h$, a small positive discretization parameter, let $
\{e_\ell\}_{\ell = 1}^{N_h}$ be a FEM ``triangulation" of $\Om$ and $ \{ \tilde e_\ell\}_{\ell = 1}^{\tilde N_h}$ be a FEM ``triangulation" of $\Op$, where each element $e_\ell$ and $\tilde e_\ell$ is a triangle and (among other properties) $\cup_{\ell = 1}^{N_h} e_\ell = \Om$ and $\cup_{\ell = 1}^{\tilde N_h} \tilde e_\ell = \Op$.

A few things to note:
\begin{enumerate}
\item[(A)] Relative to the ``triangulation" of $\Om$, $V_h^-$ will denote the classic $\mathbf{H}^1$-conforming FEM finite dimensional subspace for the fluid $\bum$ variable such that 

\begin{align} \label{Vhspace}
\mu_h \in V^-_h \Rightarrow \left. \mu_h\right|_{e_\ell} \in [\mathbb{P}_2]^2, \, \, 
&V_h^- \subset \mathbf{H}_0^1(\Om), \, \,  
V_h^- \not\subset \mathbf{H}^2(\Om); \, \,  \\
&V_h^- \subset \mathbf{C}(\overline{\Om}), \, \, V_h^- \not\subset \mathbf{C}^1(\overline{\Om}). \nonumber
\end{align}
(See \cite{AB}.)

To handle inhomogeneity, specify the set 
\begin{align} 
\tilde{V}_h^- = &\left\{\mu_h + \gamma_0^+(\xi) \in \mathbf{H}^1(\Om): \right.\\
 & \left. \mu_h \in V_h^-, \, \,  \gamma_0^+(\xi) = \begin{cases}  \mathbf{0} \text{ on } \partial\Om \setminus \Gamma\\
\xi \text{ on } \Gamma
\end{cases}\right.  \text{ for } \xi \in \mathbf{H}^1(\Om)\}.
\end{align}

\item[(B)] Similarly, relative to the ``triangulation" of $\Op$, $V_h^+$ will denote the classic $\mathbf{H}^1$-conforming FEM finite dimensional subspace for the fluid $\bup$ variable such that 

\begin{align} \label{Vh+space}
\mu_h \in V^+_h \Rightarrow \left. \mu_h\right|_{\tilde e_\ell} \in [\mathbb{P}_2]^2, \, \, 
&V_h^+ \subset \mathbf{H}_0^1(\Op), \, \,  
V_h^+ \not\subset \mathbf{H}^2(\Op); \, \,  \\
&V_h^+ \subset \mathbf{C}(\overline{\Op}), \, \, V_h^+ \not\subset \mathbf{C}^1(\overline{\Op}). \nonumber
\end{align}
(See \cite{AB}.)

To handle inhomogeneity, specify the set 
\begin{align}
\tilde{V}_h^+ = & \left\{  \mu_h + \gamma_0^+(\xi) \in \mathbf{H}^1(\Op): \right.\\
& \left. \mu_h \in V_h^+, \, \,  \gamma_0^+(\xi) = \begin{cases}  \textbf{0} \text{ on } \partial\Op \setminus \Gamma\\
\xi \text{ on } \Gamma 
\end{cases} \right. \text{ for } \xi \in \mathbf{H}^1(\Op) \} .
\end{align}

\item[(C)] In addition, $\Pi^-_h$ will denote the $L^2$-FEM finite dimensional subspace for the pressure variable, $p^-$, defined by 
\beq \label{Pi-space}
\Pi^-_h = \{ q_h \in L^2(\Om)\cap C(\overline \Om): \forall \ell = 1, \ldots, N_h; , \left. q_h\right|_{e_\ell} \in \mathbb{P}^1\}
\eeq

(See \cite{AB}.)

\item[(D)] Similarly, $\Pi^+_h$ will denote the $L^2$-FEM finite dimensional subspace for the pressure variable, $p^+$, defined by 
\beq \label{Pi+space}
\Pi^+_h = \{ q_h \in L^2(\Op)\cap C(\overline \Op): \forall \ell = 1, \ldots, \tilde N_h; , \left. q_h\right|_{\tilde e_\ell} \in \mathbb{P}^1\}
\eeq
(See \cite{AB}.)
\end{enumerate}

For the spaces $V^-_h$, $V^+_h$, $\Pi^-_h$, and $\Pi^+_h$ described above, we will have need of the following discrete estimates relative to mesh parameter $h$:

\begin{enumerate}
\item[(A')] In regard to the $\mathbf{H}^1(\Om)$-conforming FEM space $V_h^-$ in (\ref{Vhspace}) we have the following estimate: For $\mu \in \mathbf{H}^2(\Om) \cap \mathbf{H}_0^1(\Om)$,
\beq \label{um_bound}
\min_{\mu_h \in V_h^-} || \mu - \mu_h||_{\mathbf{H}^1_0(\Om)} \leq C h |\mu|_{2,\Om}.
\eeq
(See Theorem 5.6, p. 224, of \cite{AB}.)

\item[(B')] In regard to the $\mathbf{H}^1(\Op)$-conforming FEM space $V_h^+$ in (\ref{Vh+space}) we have the following estimate: For $\mu \in \mathbf{H}^2(\Op) \cap \mathbf{H}_0^1(\Op)$,
\beq \label{up_bound}
\min_{\mu_h \in V_h^+} || \mu - \mu_h||_{\mathbf{H}^1_0(\Op)} \leq C h |\mu|_{2,\Op}.
\eeq
(See Theorem 5.6, p. 224, of \cite{AB}.)

\item[(C')] Similarly, in regard to the finite dimensional space $\Pi^-_h$, we have the discrete estimate: For $q \in H^1(\Om)$, 

\beq \label{pm_bound}
\min_{q_h \in \Pi^-_h} || q - q_h||_{L^2(\Om)} \leq C h ||q||_{H^1(\Om)}.
\eeq
(See e.g., Corollary 1.128, pg. 70, of \cite{Guermond}.)
\item[(D')] Finally, in regard to the finite dimensional space $\Pi^+_h$, we have the discrete estimate: For $q \in H^1(\Op)$, 

\beq \label{pp_bound}
\min_{q_h \in \Pi^+_h} || q - q_h||_{L^2(\Op)} \leq C h ||q||_{H^1(\Op)}.
\eeq
(See e.g., Corollary 1.128, pg. 70, of \cite{Guermond}.)

\end{enumerate}

The goal here is to find a finite dimensional approximation $[\mathbf{u}_h^-, p^-_h] \in V_h^- \times \Pi^-_h$ to the solution $[\bum, p^-]$ of (\ref{compact_var_form}), as well as an approximation $[\mathbf{u}^+_h, p^+_h] \in V_h^+ \times \Pi_h^+$ of the fluid and pressure in $\Op$, respectively. 
These  particular FEM subspaces are chosen with a view to satisfy the discrete Babuska-Brezzi condition relative to a mixed variational formulation, a formulation which is entirely analogous to (\ref{compact_var_form}) for the static fluid-fluid PDE system (\ref{varequs}) - (\ref{end_resolvent}).

In line with the maximality argument of Section \ref{sec:Maximality}, the initial task in the present finite dimensional setting is to numerically resolve the fluid and pressure variables in $\Om$ in the PDE system (\ref{varequs}). Namely, with the bilinear and linear functionals $a_\lambda(\cdot, \cdot)$, $b(\cdot, \cdot)$, and $F(\cdot)$ of (\ref{a_lambda}) - (\ref{F}), the present discrete problem is to find 
$$
[\mathbf{u}_h^-, p_h^-] \in V_h^- \times \Pi_h^- 
$$
which solve 
\beq \label{discrete_varequs}
\begin{cases}
a_\lambda(\mathbf{u}_h^-, \mathbf{\varphi}_h^-) + b(\mathbf{\varphi}_h^-, p_h^-) & = F(\mathbf{\varphi}_h^-) \, \, \, \,  \,  \forall \, \,  \mathbf{\varphi}_h^- \in V_h^-,\\
\hspace{.85in} -b(\mathbf{u}_h^-, \mathbf{\psi}_h^-) & = 0 \, \, \, \ \, \, \hspace{.33in} \forall \, \, \mathbf{\psi}_h^- \in \Pi_h^-.
\end{cases}
\eeq

As $V_h^-$ and $\Pi_h^-$ are closed subspaces of  $\mathbf{H}_{\partial \Om \setminus \Gamma}^1(\Om)$ and $L^2(\Om)$, the linearity, continuity, and ellipticity arguments of previous sections also extend to (\ref{discrete_varequs}). Additionally, the necessary (discrete) inf-sup condition 
\beq
\sup_{\varphi_h \in V_h^-} \frac{b(\varphi_h^-, \eta)}{||\varphi_h^-||_{\mathbf{H}_{\partial\Om \setminus \Gamma}(\Om)}} \geq \beta_h ||\eta||_{L^2(\Om)} \, \, \, \, \forall \, \, \eta \in L^2(\Om)
\eeq
is satisfied uniformly in parameter $h$ (at least for $h$ small enough). The discrete inf-sup condition  has historically been nontrivial to show in uncoupled fluid flows, but see Lemma 4.23 in \cite{Guermond} or Theorem 3.1 in \cite{Avalos&Toundykov} for a proof. 

Thus, one has the unique solvability of this discrete problem (\ref{discrete_varequs}) via the Babuska-Brezzi Theorem, giving the desired  $[\mathbf{u}_h^-, p_h^-] \in V_h^- \times \Pi_h^- $.

From these, as mentioned above, we immediately recover the fluid and pressure approximations in $\Op$, $\mathbf{u}_h^+$ and $p^+_h$, from the discrete solution pair $[\mathbf{u}_h^-, p_h^-] \in V_h^- \times \Pi_h^- $ via
\beq \label{mapping_sols}
 \bbm \mathbf{u}_h^+ \\
p^+_h \ebm 
= [D_{\lambda,h}(\mathbf{u}_h^-|_\Gamma) + \mathbb{A}^{-1}_{\l,h}(\mathbf{f},g)]
\eeq
for $f \in \mathbf{L}^2(\Op), \, \, g \in L^2(\Op)$, and  $D_{\lambda, h}(\cdot)$ as in (\ref{Dlambda}) and $\mathbb{A}_{\l,h}$ as in (\ref{AGWStep1}). 

We note here that these maps, $D_{\l,h}: \mathbf{H}^{1/2} (\Gamma) \to [V_h^+ \cap \, \mathbf{H}_{\partial \Op \setminus \Gamma}^1(\Op)] \times \Pi_h^+$ and $\mathbb{A}_{\l,h}^{-1}: \mathbf{L}^2(\Op) \times L^2(\Op) \to [V^+_h \cap \,  \mathbf{H}_0^1(\Op)] \times \Pi_h^+ $, are entirely analogous to those defined in (\ref{Dlambda}) and (\ref{AGWStep1}) and are wellposed in the discrete setting. Indeed, since $V_h^+$ is a closed subspace of $\mathbf{H}^1_{\partial \Op \setminus \Gamma}(\Op)$ and $\mathbf{H}^1_{0}(\Op)$, and $\Pi^+_h$ is a closed subspaces of $L^2(\Op)$, the same arguments as in Lemma \ref{Dlemma} and Lemma \ref{A_lemma} apply. 

\subsection{Error estimates for the finite element problem}
We now turn to determining error bounds for approximate solutions obtained in Section \ref{sec:approx_sols}, in particular, establishing convergence rates involving the mesh parameter, $h$.  In what follows, we will make use of the following result (Lemma 2.44) from \cite{Guermond}, which is not stated here in its full generality.

\begin{lemma} \label{lem:Guermond}
With reference to the quantities in Theorem \ref{thm:BBT} above, let $\Sigma_h$ be a subspace of $\Sigma$, and let $M_h$ be a subspace of $M$. Suppose further that the bilinear form $a: \Sigma \times \Sigma \to \R$ is $\Sigma$-elliptic; that is, $\exists \alpha > 0$ such that 
\beq	\label{coercive}
a(\sigma, \sigma) \geq \alpha ||\sigma||_{\Sigma}^2 \, \, \, \forall \, \sigma \in \Sigma.
\eeq 
Also, assume that the following ``discrete inf-sup" condition is satisfied: $\exists \, \beta_h > 0$ such that 
\beq \label{discrete_infsup}
\inf_{q_h \in M_h} \sup_{\tau_h \in \Sigma_h} \frac{b(\tau_h, q_h)}{||\tau_h||_\Sigma ||q_h||_M} \geq \beta_h,
\eeq
where $\beta_h >0 $ may depend on subspaces $\Sigma_h$ and $M_h$. Moreover, let $(\sigma_h, p_h) \in \Sigma_h \times M_h$ solve the following (approximating) variational problem:
\beq
\begin{cases}
a(\sigma_h, \tau_h) + b(\tau_h,p_h) =  (\kappa, \tau_h) & \forall \tau_h \in \Sigma_h;\\
b(\sigma_h, q_h) = (l, q_h), & \forall q_h \in M_h.
\end{cases}
\eeq
(Note that the existence and uniqueness of the solution pair $(\sigma_h, p_h)$ follows from Section \ref{sec:approx_sols}, in view of (\ref{coercive}) and (\ref{discrete_infsup}).) Then one has the following error estimates:
\beq \label{fluid_inequ}
||\sigma - \sigma_h||_\Sigma \leq c_{1,h} \inf_{\zeta_h \in \Sigma_h} ||\sigma - \zeta_h||_\Sigma + c_{2,h} \inf_{q_h \in M_h} ||p - q_h||_M,
\eeq

\beq \label{pressure_inequ}
||p- p_h||_\Sigma \leq c_{3,h} \inf_{\zeta_h \in \Sigma_h} ||\sigma - \zeta_h||_\Sigma + c_{4,h} \inf_{q_h \in M_h} ||p - q_h||_M,
\eeq
with $c_{1,h} = \left( 1 + \frac{||a||}{\alpha_h}\right)  \left( 1 + \frac{||b||}{\beta_h}\right)$, $ c_{2,h} = \frac{||b||}{\alpha_h}$; moreover, if $M = M_h$, one can take $c_{2,h}  = 0, c_{3,h} = c_{1,h} \frac{||a||}{\beta_h}$, and $c_{4,h} = 1 + \frac{||b||}{\beta_h} + c_{2,h} \frac{||a||}{\beta_h}$. 
\end{lemma}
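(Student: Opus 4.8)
The plan is to run the classical C\'ea-type argument for saddle-point problems, resting on three ingredients: Galerkin orthogonality, the global coercivity (\ref{coercive}) of $a(\cdot,\cdot)$, and the discrete inf-sup condition (\ref{discrete_infsup}). Write $\|a\|$, $\|b\|$ for the continuity constants of the two forms; since coercivity is assumed on all of $\Sigma$ we take $\alpha_h=\alpha$. First, because $\Sigma_h\subset\Sigma$ and $M_h\subset M$, the exact pair $(\sigma,p)$ satisfies the equations of Theorem~\ref{thm:BBT} when tested against $\tau_h\in\Sigma_h$ and $q_h\in M_h$; subtracting the discrete equations gives the orthogonality relations $a(\sigma-\sigma_h,\tau_h)+b(\tau_h,p-p_h)=0$ for all $\tau_h\in\Sigma_h$ and $b(\sigma-\sigma_h,q_h)=0$ for all $q_h\in M_h$.

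Next I would build a constrained intermediate approximation. For arbitrary $\tau_h\in\Sigma_h$, the functional $q_h\mapsto b(\sigma-\tau_h,q_h)$ on $M_h$ has norm at most $\|b\|\,\|\sigma-\tau_h\|_\Sigma$; (\ref{discrete_infsup}), in its equivalent form that the operator $B_h\colon\Sigma_h\to M_h'$, $\langle B_h\tau_h,q_h\rangle=b(\tau_h,q_h)$, is surjective with a right inverse of norm $\le 1/\beta_h$, then produces $z_h\in\Sigma_h$ with $b(z_h,q_h)=b(\sigma-\tau_h,q_h)$ for all $q_h\in M_h$ and $\|z_h\|_\Sigma\le\frac{\|b\|}{\beta_h}\|\sigma-\tau_h\|_\Sigma$. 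Setting $\zeta_h=\tau_h+z_h$ and optimizing over $\tau_h$ gives $\zeta_h\in\Sigma_h$ with $b(\zeta_h,q_h)=b(\sigma,q_h)$ for all $q_h\in M_h$ and $\|\sigma-\zeta_h\|_\Sigma\le\bigl(1+\frac{\|b\|}{\beta_h}\bigr)\inf_{\tau_h\in\Sigma_h}\|\sigma-\tau_h\|_\Sigma$. The crucial point is that $w_h:=\sigma_h-\zeta_h\in\Sigma_h$ then obeys $b(w_h,q_h)=(\ell,q_h)-b(\sigma,q_h)=0$ for all $q_h\in M_h$, i.e.\ $w_h$ lies in the discrete kernel $Z_h=\{\tau_h\in\Sigma_h:b(\tau_h,q_h)=0\ \forall q_h\in M_h\}$. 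Coercivity now gives $\alpha\|w_h\|_\Sigma^2\le a(w_h,w_h)=a(\sigma_h-\sigma,w_h)+a(\sigma-\zeta_h,w_h)$; the first term equals $b(w_h,p-p_h)$ by orthogonality, and since $w_h\in Z_h$ we may subtract any $q_h\in M_h$ to bound it by $\|b\|\,\|w_h\|_\Sigma\inf_{q_h}\|p-q_h\|_M$ (this term vanishes outright when $M=M_h$, since then $b(w_h,\cdot)$ annihilates all of $M$). Dividing by $\|w_h\|_\Sigma$, combining with $\|\sigma-\sigma_h\|_\Sigma\le\|\sigma-\zeta_h\|_\Sigma+\|w_h\|_\Sigma$ and the bound on $\|\sigma-\zeta_h\|_\Sigma$, yields (\ref{fluid_inequ}) with $c_{1,h}=\bigl(1+\frac{\|a\|}{\alpha_h}\bigr)\bigl(1+\frac{\|b\|}{\beta_h}\bigr)$ and $c_{2,h}=\frac{\|b\|}{\alpha_h}$ (respectively $c_{2,h}=0$ when $M=M_h$).

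For the pressure, apply (\ref{discrete_infsup}) to $p_h-q_h\in M_h$ for arbitrary $q_h\in M_h$: $\beta_h\|p_h-q_h\|_M\le\sup_{\tau_h\in\Sigma_h}b(\tau_h,p_h-q_h)/\|\tau_h\|_\Sigma$. Splitting $b(\tau_h,p_h-q_h)=b(\tau_h,p_h-p)+b(\tau_h,p-q_h)$ and using $b(\tau_h,p_h-p)=a(\sigma-\sigma_h,\tau_h)$ from orthogonality, one gets $\|p_h-q_h\|_M\le\frac{\|a\|}{\beta_h}\|\sigma-\sigma_h\|_\Sigma+\frac{\|b\|}{\beta_h}\|p-q_h\|_M$; a triangle inequality, the infimum over $q_h$, and substitution of the velocity estimate give (\ref{pressure_inequ}) with $c_{3,h}=c_{1,h}\frac{\|a\|}{\beta_h}$ and $c_{4,h}=1+\frac{\|b\|}{\beta_h}+c_{2,h}\frac{\|a\|}{\beta_h}$.

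The only step that is not pure bookkeeping is the construction of $\zeta_h$: one must invoke (\ref{discrete_infsup}) in the ``surjective operator with bounded right inverse'' form (equivalently, that $B_h$ restricts to an isomorphism from the orthogonal complement of $Z_h$ in $\Sigma_h$ onto $M_h'$ with constant $\beta_h$) and then verify carefully, using both the continuous and the discrete constraint equations, that the corrected error $w_h=\sigma_h-\zeta_h$ genuinely lies in $Z_h$ — this is precisely what permits swapping $p_h$ for an arbitrary $q_h$ in the velocity estimate and isolates the $\inf_{q_h}\|p-q_h\|_M$ factor. Everything else reduces to the triangle inequality and the continuity bounds $\|a\|$, $\|b\|$.
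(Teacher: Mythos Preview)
The paper does not prove this lemma at all: it is quoted verbatim as ``Lemma 2.44 from \cite{Guermond}'' and used as a black box in the proof of Theorem~\ref{thm:conv_bounds}. So there is no in-paper argument to compare against. Your proof is the standard C\'ea--Brezzi argument for conforming mixed approximations, and it is correct; in particular the key step --- lifting an arbitrary $\tau_h\in\Sigma_h$ to a $\zeta_h\in\Sigma_h$ satisfying the discrete constraint $b(\zeta_h,q_h)=b(\sigma,q_h)$ via the bounded right inverse furnished by (\ref{discrete_infsup}), so that $w_h=\sigma_h-\zeta_h\in Z_h$ and the $p_h$ in $b(w_h,p-p_h)$ can be swapped for an arbitrary $q_h\in M_h$ --- is handled cleanly, as is the observation that this term vanishes when $M=M_h$. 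This is precisely the argument one finds in Ern--Guermond (and in Brezzi--Fortin before it), so your write-up effectively reconstructs the cited reference.
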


Concerning the efficacy of our FEM for numerically approximating the fluid-fluid structure system (\ref{varequs}) -(\ref{end_resolvent}), we have the following:

\begin{thm} \label{thm:conv_bounds}
With fluid chambers $\Om$ and $\Op$ as before, let $h > 0$ be the discretization parameter  which gives rise to the FEM subspaces $V_h^-$, $V^+_h$, $\Pi_h^-$, and $\Pi_h^+$ of (\ref{Vhspace}) - (\ref{Pi+space}), respectively. Given the solution variables $[\mathbf{u}^+, p^+, \mathbf{u}^-, p^-] \in \mathcal{D}(\mathcal{A}) \times L^2(\Om)$ of (\ref{varequs}) -(\ref{end_resolvent}), further suppose $[\mathbf{u}^+, p^+, \mathbf{u}^-, p^-] \in \mathbf{H}^2(\Op) \times H^1(\Op) \times \mathbf{H}^2(\Om) \times H^1(\Om)$ and $\mu_\l(\cdot) , \, \tilde \mu (\mathbf{f}, g) \in \mathbf{H}^2(\Op)$ and $q_\l(\cdot) , \, \tilde q (\mathbf{f}, g) \in \mathbf{H}^1(\Op)$. Then with respect to their FEM approximations $[\mathbf{u}_h^+, p_h^+, \mathbf{u}_h^-, p_h^-]$, as given by (\ref{discrete_varequs}) and (\ref{mapping_sols}), we have the following rates of convergence:
\begin{enumerate}
\item $ ||\mathbf{u}^- - \mathbf{u}^-_h||_{\mathbf{H}^1_{\partial \Om \setminus{\Gamma}}(\Om)} \leq C_\lambda \cdot h ||[\mathbf{f}, g, \mathbf{h}]||_{\mathcal{H}}$.
\item $ ||p^- - p^-_h||_{L^2(\Om)} \leq C_\lambda \cdot h ||[\mathbf{f}, g, \mathbf{h}]||_{\mathcal{H}}$.

\item 
$ ||\mathbf{u}^+ - \mathbf{u}^+_h||_{\mathbf{H}^1_{\partial \Op \setminus{\Gamma}}(\Op)} \leq C_\lambda \cdot h ||[\mathbf{f}, g, \mathbf{h}]||_{\mathcal{H}}$.

\item $ ||p^+ - p^+_h||_{L^2(\Op)} \leq C_\lambda \cdot h ||[\mathbf{f}, g, \mathbf{h}]||_{\mathcal{H}}$.

\end{enumerate}

\end{thm}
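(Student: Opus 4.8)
The plan is to derive estimates 1--2 (the $\Om$-variables) by applying Lemma~\ref{lem:Guermond} to the saddle-point system (\ref{compact_var_form}) and its discretization (\ref{discrete_varequs}), and then to derive 3--4 (the $\Op$-variables) by propagating the resulting $\Om$-error through the solution operators $D_\l$ and $\mathbb{A}_\l^{-1}$ via the representation (\ref{up,pp}). For the first step we take, in Lemma~\ref{lem:Guermond}, $\Sigma=\mathbf{H}^1_{\partial\Om\setminus\Gamma}(\Om)$, $M=L^2(\Om)$, $\Sigma_h=V_h^-$, $M_h=\Pi_h^-$, $a=a_\l$, $b=b$, with $F$ in the role of the functional $(\kappa,\cdot)$. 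Continuity of $a_\l$ and $b$ and the $\Sigma$-ellipticity of $a_\l$ were established in Section~\ref{sec:Maximality}; since the ellipticity estimate (\ref{ellipticity_of_alambda}) holds on all of $\Sigma$ we may take $\alpha_h=\alpha$ independent of $h$, and the discrete inf-sup constant satisfies $\beta_h\ge\beta_0>0$ uniformly (for $h$ small) by the cited stability results of \cite{Guermond, Avalos&Toundykov}. Hence the constants $c_{1,h},\dots,c_{4,h}$ in Lemma~\ref{lem:Guermond} are all $\le C_\l$ uniformly in $h$, and (\ref{fluid_inequ})--(\ref{pressure_inequ}) give
$$
\|\bum-\bum_h\|_{\mathbf{H}^1_{\partial\Om\setminus\Gamma}(\Om)}+\|p^- - p^-_h\|_{L^2(\Om)}\le C_\l\Big(\inf_{\zeta_h\in V_h^-}\|\bum-\zeta_h\|_{\mathbf{H}^1(\Om)}+\inf_{q_h\in\Pi_h^-}\|p^- - q_h\|_{L^2(\Om)}\Big).
$$

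Next, the regularity hypotheses $\bum\in\mathbf{H}^2(\Om)$ and $p^-\in H^1(\Om)$ together with the interpolation estimates (\ref{um_bound}) and (\ref{pm_bound}) bound the right-hand side by $C_\l h\big(|\bum|_{2,\Om}+\|p^-\|_{H^1(\Om)}\big)$, and $|\bum|_{2,\Om}+\|p^-\|_{H^1(\Om)}\le C_\l\|[\mathbf{f},g,\mathbf{h}]\|_{\mathcal{H}}$ follows from elliptic regularity for the Stokes-type system (\ref{um_equn_inL2}) (equivalently, from the Closed Graph Theorem applied to the solution map $\mathcal{H}\to\mathbf{H}^2(\Om)\times H^1(\Om)$, whose range lies in that space by the standing regularity hypothesis); this proves 1 and 2. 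For 3--4 we subtract the discrete representation (\ref{mapping_sols}) from the continuous one (\ref{up,pp}) and insert $D_{\l,h}(\bum|_\Gamma)$ to obtain
$$
\bbm\bup\\ p^+\ebm-\bbm\bup_h\\ p^+_h\ebm=\big[D_\l(\bum|_\Gamma)-D_{\l,h}(\bum|_\Gamma)\big]+D_{\l,h}\big((\bum-\bum_h)|_\Gamma\big)+\big[\mathbb{A}_\l^{-1}(\mathbf{f},g)-\mathbb{A}_{\l,h}^{-1}(\mathbf{f},g)\big].
$$
The first and third brackets are C\'ea/Galerkin discretization errors for the (Lax--Milgram, resp.\ transport-plus-Lax--Milgram) problems that define $D_\l$ and $\mathbb{A}_\l^{-1}$, whose discrete stability constants are again $h$-independent for the same reason as above; using the hypotheses $\mu_\l(\cdot),\tilde\mu(\mathbf{f},g)\in\mathbf{H}^2(\Op)$ and $q_\l(\cdot),\tilde q(\mathbf{f},g)\in H^1(\Op)$ and the interpolation estimates (\ref{up_bound}) and (\ref{pp_bound}), each of these is $\le C_\l h\|[\mathbf{f},g,\mathbf{h}]\|_{\mathcal{H}}$. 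For the middle bracket, the uniform-in-$h$ boundedness of $D_{\l,h}$ together with the Sobolev trace inequality gives $\|D_{\l,h}((\bum-\bum_h)|_\Gamma)\|\le C\|(\bum-\bum_h)|_\Gamma\|_{\mathbf{H}^{1/2}(\Gamma)}\le C\|\bum-\bum_h\|_{\mathbf{H}^1(\Om)}\le C_\l h\|[\mathbf{f},g,\mathbf{h}]\|_{\mathcal{H}}$ by the first step. Summing the three contributions in the $\mathbf{H}^1_{\partial\Op\setminus\Gamma}(\Op)\times L^2(\Op)$-norm yields 3 and 4.

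The main obstacle is keeping every constant uniform in $h$: ellipticity of $a_\l$ (and of the bilinear forms underlying $D_\l$ and $\mathbb{A}_\l^{-1}$) must be invoked on the full spaces so that $\alpha_h$ does not degenerate, and --- the genuinely delicate ingredient --- the discrete inf-sup bound $\beta_h\ge\beta_0>0$ must be available, which is precisely the stability of the chosen $\mathbb{P}_2$--$\mathbb{P}_1$ velocity--pressure pairing and is imported from \cite{Guermond, Avalos&Toundykov}. A secondary technical point is the interface coupling: the $\Op$-error inherits the $\Gamma$-trace of the $\Om$-error, so one must verify that the discrete extension operators $D_{\l,h}$ are bounded uniformly in $h$ before chaining in the trace estimate, and one must keep the standing regularity hypotheses in force throughout so that every right-hand side above can be absorbed into $\|[\mathbf{f},g,\mathbf{h}]\|_{\mathcal{H}}$.
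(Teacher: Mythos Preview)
Your proposal is correct and follows essentially the same route as the paper: Lemma~\ref{lem:Guermond} applied to the pair $(a_\l,b)$ for parts 1--2, followed by the three-term splitting $[D_\l-D_{\l,h}](\bum|_\Gamma)+D_{\l,h}((\bum-\bum_h)|_\Gamma)+[\mathbb{A}_\l^{-1}-\mathbb{A}_{\l,h}^{-1}](\mathbf{f},g)$ for parts 3--4, with each piece handled by C\'ea-type bounds, the interpolation estimates (\ref{um_bound})--(\ref{pp_bound}), and the uniform stability of the discrete solution maps. Your explicit attention to the $h$-uniformity of $\alpha_h$ and $\beta_h$, and your use of the Closed Graph Theorem to justify $|\bum|_{2,\Om}+\|p^-\|_{H^1(\Om)}\le C_\l\|[\mathbf{f},g,\mathbf{h}]\|_{\mathcal H}$, are in fact sharper than what the paper writes (the paper simply cites ``wellposedness of (\ref{compact_var_form})'' at that step, which strictly speaking only controls the $\mathbf{H}^1\times L^2$ norm).
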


\begin{proof}
Throughout, we will appeal to Lemma \ref{lem:Guermond}, taking 
\beq 
a(\cdot, \cdot)  \equiv a_\l(\cdot, \cdot): \mathbf{H}_{\partial \Om \setminus \Gamma}^1(\Om) \times \mathbf{H}_{\partial \Om \setminus \Gamma}^1(\Om)  \to \R \nonumber
\eeq
and set
\beq 
b(\cdot, \cdot) : \mathbf{H}_{\partial \Om \setminus \Gamma}^1(\Om)  \times L^2(\Om) \to \R \text{ as in (\ref{b})} \nonumber. 
\eeq 
Recall in Section \ref{sec:Maximality}, we showed that both $a_\l(\cdot, \cdot)$ and $b(\cdot, \cdot)$ are bounded, so \\
$||a||_{\mathcal{L}([\mathbf{H}_{\partial \Om \setminus \Gamma}^1(\Om)]^2 , \R)} \leq C_\l$ and $||b||_{\mathcal{L}(\mathbf{H}_{\partial \Om \setminus \Gamma}^1(\Om) \times L^2(\Om), \R)} \leq C_\l$ for some $C_\l > 0$. Additionally, $a_\l(\cdot,  \cdot)$ was $\mathbf{H}_{\partial \Om \setminus \Gamma}^1(\Om) $-elliptic.

Thus, using (\ref{fluid_inequ}) and (\ref{um_bound}), we have
\begin{align}
||\bum - \mathbf{u}_h^-||_{\mathbf{H}^1_{\partial \Om \setminus \Gamma}(\Om)} & \leq C_{1,h} \inf_{\varphi_h \in V_h^-} || \bum - \varphi_h||_{\mathbf{H}_{\partial \Om \setminus \Gamma}^1(\Om) } + C_{2,h} \inf_{q_h \in \Pi^-_h} ||p^- - q_h||_{L^2(\Om)} \nonumber \\
& \leq \tilde C_{1,h}  h |\bum|_2 + \tilde C_{2,h} h |p^-|_1 \nonumber\\
& \leq Ch |[|\mathbf{f}, g, \mathbf{h}]||_{ \mathcal{H}}, 
\end{align}
where the last step follows from wellposedness of (\ref{compact_var_form}). 

Similar to above, using (\ref{pressure_inequ}) and (\ref{pm_bound}), we have
\begin{align}
||p^- - p^-_h||_{L^2(\Om)} & \leq C_{3,h} \inf_{\varphi_h \in V_h^-} ||\bum - \varphi_h||_{\mathbf{H}_{\partial \Om \setminus \Gamma}^1(\Om) } + C_{4,h}\inf_{q_h \in \Pi^-_h} ||p^- - q_h||_{L^2(\Om)}  \nonumber\\
& \leq C_{3,h} ||\bum - \mathbf{u}_h^- ||_{\mathbf{H}_{\partial \Om \setminus \Gamma}^1(\Om) } + C_{4,h} ||p^- - p_h^-||_{L^2(\Om)} \nonumber \\
& \leq Ch ||[\mathbf{f}, g, \mathbf{h}]||_{ \mathcal{H}},
\end{align}
where again the last step follows from wellposedness of (\ref{compact_var_form}).

For the third and fourth inequalities, we note the following result from \cite{AB} (pg. 215): Letting ${\mathbf{u}}^+_h$ be the Ritz-Galerkin  approximation and $\mathbf{u}^+_I$ be an $V_h^+$ interpolant of $\bup$, we have 
\beq \label{fluid_dis_bound}
||\bup -{\mathbf{u}}^+_h||_{\mathbf{H}^1(\Op)} \leq C ||\bup - \mathbf{u}^+_I ||_{\mathbf{H}^1(\Op)}.
\eeq
Similarly, with $p^+_h$ being the Ritz-Galerkin approximation and $p^+_I$ as a $\Pi_h^+$ interpolant of $p^+$, we have
\beq\label{pressure_dis_bound}
||p^+ - p^+_h||_{\Op} \leq C ||p^+ - p^+_I ||_\Op. 
\eeq

We will also make use of the following result (Lemma 1.130) from \cite{Guermond}:
\begin{lemma} Letting $u_I$ be the Scott-Zhang interpolant of $u$, and $k$ be the degree of polynomial basis functions, there exists some $C > 0$ such that 
\beq \label{SZestimate}
||u - u_I||_{H^m(\Omega)} \leq C h^{\ell - m} |u|_{\ell}.
\eeq
\end{lemma}

Using $\bup =\mu_\l(\bum) + \tilde \mu(\mathbf{f},g)$ and $\mathbf{u}_h^+ = \mu_{\l, h}(\mathbf{u}_h^-)  + \tilde \mu_h(\mathbf{f},g)$,  consider 
\begin{align} \label{u+discreteerror}
|| \bup - \mathbf{u}_h^+||_{\mathbf{H}^1_{\partial \Op \setminus \Gamma}(\Op)} & = ||\mu_\l(\bum) + \tilde \mu(\mathbf{f},g) - (\mu_{\l, h}(\mathbf{u}_h^-)  + \tilde \mu_h(\mathbf{f},g))||_{\mathbf{H}^1_{\partial \Op \setminus \Gamma}(\Op)} \nonumber \\
& \leq ||\mu_\l(\bum)  - \mu_{\l,h}(\bum) ||_{\mathbf{H}^1_{\partial \Op \setminus \Gamma}(\Op)} \nonumber   \\
& \hspace{.3in}+ ||\mu_{\l,h}(\bum) -(\mu_{\l, h}(\mathbf{u}_h^-) ||_{\mathbf{H}^1_{\partial \Op \setminus \Gamma}(\Op)}  \nonumber\\
& \hspace{.3in}+ ||\tilde \mu(\mathbf{f},g)  -  \tilde \mu_h(\mathbf{f},g)||_{\mathbf{H}^1_{\partial \Op \setminus \Gamma}(\Op)}. 
\end{align}

For the first term, we apply (\ref{fluid_dis_bound}), (\ref{SZestimate}), wellposedness of $\mu_\l$, and continuous dependence on data of $\bum$ to obtain:
\begin{align} \label{first_term}
||\mu_\l(\bum)  - \mu_{\l,h}(\bum) ||_{\mathbf{H}^1_{\partial \Op \setminus \Gamma}(\Op)} & \leq  ||\mu_\l(\bum)  - \mu_{\l,I}(\bum) ||_{\mathbf{H}^1_{\partial \Op \setminus \Gamma}(\Op)} \nonumber \\
& \leq Ch|\mu_\l(\bum|_{\Gamma})|_2 \nonumber\\
& \leq C_1h ||\bum||_{\mathbf{H}^{1/2}(\Gamma)} \nonumber\\
& \leq C_2 h ||\bum||_{\mathbf{H}^1(\Om)} \nonumber\\
& \leq  C_3 h || [\mathbf{f}, g, \mathbf{h}] ||_\mathcal{H}.
\end{align}

Likewise, using the wellposedness of $\tilde \mu(\mathbf{f}, g)$, for the third term we have 
\beq \label{third_term}
 ||\tilde \mu(\mathbf{f},g) -  \tilde \mu_h(\mathbf{f},g)||_{\mathbf{H}^1_{\partial \Op \setminus \Gamma}(\Op)} \leq C h ( ||\mathbf{f}||_\Op + ||g||_\Op)  \leq C h || [\mathbf{f}, g, \mathbf{h}] ||_\mathcal{H}.
 \eeq

For the second term of (\ref{u+discreteerror}), we note that $\mu_{\l, h}$ is a linear map. Thus, using the wellposedness of $\mu_{\l,h}$, it follows that 
\begin{align} \label{second_term}
||\mu_{\l,h}(\bum) -(\mu_{\l, h}(\mathbf{u}_h^-) ||_{\mathbf{H}^1_{\partial \Op \setminus \Gamma}(\Op)} &  = ||\mu_{\l,h} (\bum - \mathbf{u}^-_h)||_{\mathbf{H}^1_{\partial \Op \setminus \Gamma}(\Op)} \nonumber \\
& \leq C ||\bum - \mathbf{u}^-_h||_{\mathbf{H}^1_{\partial \Op \setminus \Gamma}(\Op)} \nonumber \\
& \leq Ch ||[\mathbf{f}, g, \mathbf{h}]||_\mathcal{H},
\end{align}
after using Part 1 of this theorem. 

Combining (\ref{first_term}), (\ref{second_term}), and (\ref{third_term}) gives

\beq 
|| \bup - \mathbf{u}_h^+||_{\mathbf{H}^1_{\partial \Op \setminus \Gamma}(\Op)} \leq Ch ||[\mathbf{f}, g, \mathbf{h}]||_\mathcal{H},
\eeq
which establishes Part 3. 

Finally, using (\ref{pressure_dis_bound}), consider 
\begin{align} \label{p+discreteerror}
||p^+ - p^+_h||_\Op  &  =  ||q_\l(\bum) + \tilde q(\mathbf{f},g) - (q_{\l,h}(\mathbf{u}_h^-) + \tilde q_h(\mathbf{f},g))||_\Op \nonumber \\
& \leq  ||q_\l(\bum)  - q_{\l,h}(\bum) ||_\Op + ||q_{\l,h}(\bum) - q_{\l,h}(\mathbf{u}_h^-)||_\Op \nonumber \\
& \hspace{.5in} + ||\tilde q(\mathbf{f},g) - \tilde q_h(\mathbf{f},g))||_\Op
\end{align}


For the first term, we apply (\ref{pressure_dis_bound}), (\ref{SZestimate}), wellposedness of $q_\l$, and continuous dependence on data of $\bum$ to obtain:
\begin{align} \label{first_pressure_term}
||q_\l(\bum)  - q_{\l,h}(\bum) ||_\Op & \leq  ||q_\l(\bum)  - q_{\l,I}(\bum) ||_{\Op} \nonumber \\
& \leq Ch^1|q_\l(\bum|_{\Gamma})|_1 \nonumber\\
& \leq C_1h ||\bum||_{\mathbf{H}^{1/2}(\Gamma)} \nonumber\\
& \leq C_2 h ||\bum||_{\mathbf{H}^1(\Om)} \nonumber\\
& \leq  C_3 h || [\mathbf{f}, g, \mathbf{h}] ||_\mathcal{H}.
\end{align}

Similarly, using the wellposedness of $\tilde q(\mathbf{f}, g)$, for the third term we have 
\beq \label{third_pressure_term}
 ||\tilde q(\mathbf{f},g) -  \tilde q_h(\mathbf{f},g)||_\Op \leq C h ( ||\mathbf{f}||_\Op + ||g||_\Op)  \leq C h || [\mathbf{f}, g, \mathbf{h}] ||_\mathcal{H}.
 \eeq

For the second term of (\ref{p+discreteerror}), we note that $q_{\l, h}$ is a linear map. Thus, using the wellposedness of $q_{\l,h}$, it follows that 
\begin{align} \label{second_pressure_term}
||q_{\l,h}(\bum) -q_{\l, h}(\mathbf{u}_h^-) ||_\Op &  = ||q_{\l,h} (\bum - \mathbf{u}^-_h)||_{\Op} \nonumber \\
& \leq C ||\bum - \mathbf{u}^-_h||_{\mathbf{H}^1_{\partial \Om \setminus \Gamma}(\Op)} \nonumber \\
& \leq Ch ||[\mathbf{f}, g, \mathbf{h}]||_\mathcal{H},
\end{align}
after using Part 1 of this theorem. 

Combining (\ref{first_pressure_term}), (\ref{second_pressure_term}), and (\ref{third_pressure_term}) gives

\beq 
|| \bup - \mathbf{u}_h^+||_{\mathbf{H}^1_{\partial \Op \setminus \Gamma}(\Op)} \leq Ch ||[\mathbf{f}, g, \mathbf{h}]||_\mathcal{H},
\eeq
which establishes Part 4.

\end{proof}

\section{Acknowledgment}

\noindent The authors would like to thank
the National Science Foundation, and acknowledge their partial funding from
NSF Grant DMS-1907823.

\bibliographystyle{abbrv}  
\bibliography{./references}

\end{document}